\newtheorem{theorem}{Theorem}
\newtheorem{lemma}{Lemma}
\newtheorem{proposition}{Proposition}
\theoremstyle{definition}
\newtheorem{definition}{Definition}
\newtheorem{remark}{Remark}
\newtheorem*{Kantor}{Kantor's Conjecture}
\newcommand{\SSS}{\mathbb{S}}
\newcommand{\FF}{\mathbb{F}}
\newcommand{\Fp}{\mathbb{F}_p}
\newcommand{\Fq}{\mathbb{F}_q}
\newcommand{\Fqn}{\mathbb{F}_{q^n}}
\newcommand{\Fqm}{\mathbb{F}_{q^m}}
\newcommand{\HH}{\mathcal H}
\newcommand{\B}{\mathcal B}
\newcommand{\G}{\mathcal G}
\newcommand{\C}{\mathcal C}
\newcommand{\cP}{\mathcal P}
\newcommand{\cS}{\mathcal S}
\newcommand{\cI}{\mathcal I}
\def\F{\mathbb{F}}
\def\Fq{{\mathbb{F}}_q}
\def\Hom{\mathrm{Hom}}
\def\Bil{\mathrm{Bil}}
\def\Aut{\mathrm{Aut}}
\def\im{\mathrm{im}}
\def\PG{\mathrm{PG}}
\def\GL{\mathrm{GL}}
\def\dim{\mathrm{dim}}
\def\L{\mathcal{L}}
\def\Tr{\mathrm{Tr}}
\def\tr{\mathrm{tr}}
\def\Gal{\mathrm{Gal}}
\def\I{\mathcal{I}}
\def\a{\mathbf{\alpha}}
\newcommand{\pt}[1]{\langle #1 \rangle}
\DeclareMathOperator{\modr}{~mod_r}
\newcommand{\npmatrix}[1]{\left( \begin{matrix} #1 \end{matrix} \right)}
\newcommand{\rank}{\mathrm{rank}}
\begin{document}
\title{MRD Codes: Constructions and Connections}
\author{John Sheekey}
\date{\today}
\maketitle

{\bf This preprint is of a chapter to appear in {\it Combinatorics and finite fields: Difference sets, polynomials, pseudorandomness 
and applications. Radon Series on Computational and Applied Mathematics}, K.-U. Schmidt and A. Winterhof (eds.). The tables on classifications  will be periodically updated {\color{blue}(in blue)} when further results arise. If you have any data that you would like to share, please contact the author.} 

\begin{abstract}

{\it Rank-metric codes} are codes consisting of matrices with entries in a finite field, with the distance between two matrices being the rank of their difference. Codes with maximum size for a fixed minimum distance are called {\it Maximum Rank Distance} (MRD) codes. Such codes were constructed and studied independently by Delsarte (1978), Gabidulin (1985), Roth (1991), and Cooperstein (1998). Rank-metric codes have seen renewed interest in recent years due to their applications in {\it random linear network coding}.

MRD codes also have interesting connections to other topics such as {\it semifields} (finite nonassociative division algebras), finite geometry, linearized polynomials, and cryptography. In this chapter we will survey the known constructions and applications of MRD codes, and present some open problems.
\end{abstract}

\section{Definitions and Preliminaries}
\subsection{Rank-metric codes}

Coding theory is the branch of mathematics concerned with the efficient and accurate transfer of information. {\it Error-correcting codes} are used when communication is over a channel in which errors may occur. This requires a set equipped with a {\it distance function}, and a subset of allowed {\it codewords}; if errors are assumed to be small, then a received message is decoded to the nearest valid codeword.

The most well-known and widely-used example are codes in the {\it Hamming metric}; codewords are taken from a vector-space over a finite field, and distance between two vectors in $\Fq^n$ is defined as the number of positions in which they differ. We refer to \cite{MacWSl} for a detailed reference on this well-studied topic.

In rank-metric coding, codewords are instead taken from the set of matrices over a finite field, with the distance between two matrices defined as the rank of their difference. 

\begin{definition}
Let $\Fq$ denote the finite field with $q$ elements, and $M_{n \times m}(\Fq)$ the set of $n \times m$ matrices with entries in $\Fq$, with $m\leq n$. The {\it rank-weight} of a matrix $X$ is defined to be its usual column rank, and denoted by $\rank(X)$. The {\it rank-distance} between two matrices $X,Y\in M_{n \times m}(\Fq)$ is denoted by $d(X,Y)$ and defined as 
\begin{equation*}
d(X,Y) = \rank(X-Y).
\end{equation*}
\end{definition}

A {\it rank-metric code} is then a subset $\C$ of $M_{n \times m}(\Fq)$. The {\it minimum distance} of a set $\C$ with at least two elements, denoted $d(\C)$ is
\[
d(\C) = \min\{d(X,Y):X,Y \in \C,X\ne Y\}.
\]

\subsection{MRD codes}
A main goal in coding theory is to find codes of maximum possible size for a given minimum distance. While there are a variety of bounds for codes in the Hamming metric \cite{MacWSl}, for rank metric codes we need only the following simple analogue of the {\it Singleton bound}, first proved by Delsarte \cite{Delsarte1978}. We include a short proof for illustrative purposes.

\begin{theorem}[Singleton-like bound, Delsarte]
Suppose $\C\subset M_{n \times m}(\Fq)$ is such that $d(\C) = d$. Then
\begin{equation}
|\C|\leq q^{n(m-d+1)}.
\end{equation}
\end{theorem}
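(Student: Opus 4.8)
The plan is to prove the bound by a \emph{puncturing} (projection) argument, exhibiting an injective map from the code into a smaller matrix space whose cardinality is exactly the claimed bound. The total number of matrices in $M_{n \times m}(\Fq)$ is $q^{nm}$, and since $n(m-d+1) = nm - n(d-1)$, the bound asserts that a code of minimum distance $d$ can occupy at most a $q^{-n(d-1)}$ fraction of the ambient space. This suggests deleting $d-1$ columns and showing no information is lost on $\C$.

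Concretely, I would define the projection $\pi : M_{n \times m}(\Fq) \to M_{n \times (m-d+1)}(\Fq)$ that sends each matrix to the submatrix consisting of its last $m-d+1$ columns (equivalently, deletes the first $d-1$ columns). The target space has cardinality $q^{n(m-d+1)}$, so the entire argument reduces to a single claim: the restriction of $\pi$ to $\C$ is injective. Once injectivity is established, the inequality $|\C| = |\pi(\C)| \leq |M_{n \times (m-d+1)}(\Fq)| = q^{n(m-d+1)}$ follows immediately.

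The core step is verifying injectivity, and here is where the rank-metric hypothesis enters. Suppose $X, Y \in \C$ with $X \neq Y$ but $\pi(X) = \pi(Y)$. Then the difference $X - Y$ has all of its last $m-d+1$ columns equal to zero, so at most $d-1$ of its columns are nonzero. A matrix with at most $d-1$ nonzero columns has column rank at most $d-1$, hence $\rank(X-Y) \leq d-1$. On the other hand, $X \neq Y$ forces $d(X,Y) = \rank(X-Y) \geq d(\C) = d$, a contradiction. Therefore distinct codewords have distinct images, and $\pi$ is injective on $\C$.

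I do not expect a genuine obstacle here: the only point requiring care is the elementary linear-algebra fact that deleting columns drops the rank by at most the number of deleted columns, which is what pins down the exponent $d-1$ and thus the precise form of the bound. The choice to delete \emph{columns} rather than rows is deliberate, since the rank-weight is defined as the column rank and $m \leq n$; projecting onto columns is what makes the counting match $q^{n(m-d+1)}$ exactly rather than an asymmetric variant.
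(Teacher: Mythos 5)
Your proof is correct and is essentially the paper's own argument in contrapositive form: the paper pigeonholes on a coordinate projection of the codewords, and you phrase that same projection as an injection of $\C$ into $M_{n\times(m-d+1)}(\Fq)$. The only difference is that you project onto $m-d+1$ columns while the paper's proof as written speaks of rows; since each column has $n$ entries and the rank-weight is column rank, your choice is the one that makes the count $q^{n(m-d+1)}$ and the bound $\rank(X-Y)\le d-1$ come out exactly for $m\le n$, so if anything your version is the more carefully stated one.
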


\begin{proof}
Suppose $|\C|> q^{n(m-d+1)}$. By the pigeonhole principle, there must exist two distinct elements $X,Y$ of $\C$ which coincide in every entry of their first $m-d+1$ rows. Thus $X-Y$ has at least $m-d+1$ zero rows, and thus $\rank(X-Y)\leq d-1<d$, a contradiction.
\end{proof}

\begin{definition}
A subset $\C$ of $M_{n \times m}(\Fq)$ satisfying $|\C|= q^{n(m-d(\C)+1)}$ is said to be a {\it Maximum Rank Distance code}, or {\it MRD-code} for short. We say $\C$ is a $[n \times m,d(\C)]$-MRD code.
\end{definition}

MRD-codes are the rank-metric analogue of MDS-codes in the Hamming metric. Unlike MDS codes, MRD codes in fact exist for all values of $q,n,m$, and all $d$. This was first shown by Delsarte \cite{Delsarte1978}; indeed, he showed that the bound can be met by an additively-closed set. 

\begin{theorem}[\cite{Delsarte1978}]
There exists an MRD code in $M_{n \times m}(\Fq)$ with minimum distance $d$ for all $q,n,m$, and all $d$.
\end{theorem}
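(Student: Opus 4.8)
The plan is to exhibit an explicit additively-closed code meeting the Singleton-like bound of the previous theorem, via the Gabidulin construction using linearized ($q$-)polynomials. The starting observation is that since $m \leq n$, I may fix elements $g_1,\dots,g_m \in \Fqn$ that are linearly independent over $\Fq$, together with a fixed $\Fq$-basis of $\Fqn$; the latter lets me identify each element of $\Fqn$ with a column vector in $\Fq^n$. For a linearized polynomial $f(x)=\sum_{i=0}^{k-1} a_i x^{q^i}$ with coefficients $a_i \in \Fqn$, I would form the matrix $X_f \in M_{n\times m}(\Fq)$ whose $j$-th column is the coordinate vector of $f(g_j)$. Setting $k = m-d+1$, I define $\C = \{X_f : a_0,\dots,a_{k-1}\in\Fqn\}$. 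This set is closed under addition, since $f\mapsto X_f$ is $\Fq$-linear, so the resulting code will in fact be linear.

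The key fact I would invoke is the standard bound on the kernel of a linearized polynomial: a nonzero $f$ of $q$-degree at most $k-1$ acts as an $\Fq$-linear map on $\Fqn$ whose kernel is an $\Fq$-subspace of $\Fq$-dimension at most $k-1$, because $f$, viewed as an ordinary polynomial of degree $q^{k-1}$, has at most $q^{k-1}$ roots, and a subspace of dimension $t$ has $q^t$ elements. From this the two required properties follow. First, for the minimum distance: any difference of codewords satisfies $X_f - X_{f'} = X_{f-f'}$ and corresponds to a linearized polynomial of $q$-degree at most $k-1$, and its column rank equals $\dim_{\Fq}\langle f(g_1),\dots,f(g_m)\rangle = m - \dim_{\Fq}(\ker f \cap \langle g_1,\dots,g_m\rangle) \geq m-(k-1) = d$, whence $d(\C)\geq d$. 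Second, for the size: since a $q$-polynomial of $q$-degree less than $k\leq m$ is determined by its values on any $k$ linearly independent points, the map $f\mapsto X_f$ is injective, giving $|\C| = q^{nk} = q^{n(m-d+1)}$.

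Combining these, $\C$ meets the Singleton-like bound with equality; by the bound this forces $d(\C)=d$ exactly, so $\C$ is an $[n\times m,d]$-MRD code, valid for every $q$, every $m\leq n$, and every $1\leq d\leq m$ (the trivial range $d>m$ being vacuous).

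The main obstacle, and really the technical heart of the argument, is the kernel-dimension bound for linearized polynomials, since it is precisely what forces the rank from below and thereby converts a constraint on degree into a guarantee on distance; once that fact is in hand, additivity, counting, and injectivity are all routine bookkeeping. A secondary point requiring care is the passage from the square case $n=m$ to rectangular $m<n$: evaluating at exactly $m$ independent points, rather than a full $\Fq$-basis of $\Fqn$, is what keeps the codewords inside $M_{n\times m}(\Fq)$ while simultaneously preserving both the rank lower bound and the injectivity needed for the count.
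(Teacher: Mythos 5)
Your proposal is correct and is essentially the paper's own argument: the paper defers the proof to Section~\ref{sec:constructions}, where it presents the Delsarte--Gabidulin code $\G_{k,\sigma}$ of $\sigma$-linearized polynomials of $\sigma$-degree at most $k-1$, using exactly your key fact (Lemma~\ref{lem:nullity}, that the nullity is bounded by the $\sigma$-degree) to get rank at least $n-k+1$, and then passes to $M_{n\times m}(\Fq)$ by shortening/puncturing. Your only cosmetic deviation is that you build the rectangular code directly by evaluating at $m$ independent points rather than restricting the square code afterwards, which amounts to the same generator-matrix description the paper gives.
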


This article with focus on the case where $\C$ is an MRD-code which is additively-closed, which we will introduce in Section \ref{ssec:additive}. We will recall Delsarte's construction in Section \ref{sec:constructions}.

\begin{remark}
There are many equivalent ways of representing rank-metric codes. Fundamentally we are considering subsets of the space of homomorphism from one vector space to another, denoted by $\Hom(V,W)$, or the space of bilinear forms from the product of two vector spaces to the underlying field, denoted by $\Bil(V\times W,Fq)$. In the case of finite-dimensional vector spaces, we often choose to work with matrices due to their familiarity. However other representations are frequently used in the literature. We will outline these representations and their equivalences in Section \ref{sec:representations}. 


The most commonly used are vectors in $(\Fqn)^m$, and linearized polynomials in $\Fqn[x]$. 
\end{remark}

\begin{definition}
\label{def:vec}
For a vector $v= (v_1,v_2,\ldots,v_m)$, its rank-weight is defined as the dimension of the $\Fq$-span of its entries, i.e.
\[
\omega(v) = \dim_{\Fq}\pt{v_1,v_2,\ldots,v_m}.
\]
Then the rank-distance between two vectors $u,v\in (\Fqn)^m$ is defined as
\[
d(u,v) = \omega(u-v).
\]
\end{definition}
Many recent results have been obtained by representing rank-metric codes as sets of {\it $\sigma$-linearized polynomials}. These have the advantage of giving results for a more general class of fields, namely fields with a cyclic Galois extension (i.e. whose Galois group is a cyclic group of order equal to the degree of the extension).
\begin{definition}
\label{def:linpoly}
Let $K$ be a field, and $L$ a cyclic Galois extension of degree $n$, and $\sigma$ a generator for $\Gal(L:K)$. A {\it $\sigma$-linearized polynomial} is an expression of the form
\[
f(x) = f_0x+f_1x^{\sigma}+\cdots+f_kx^{\sigma^k},
\]
where $f_i\in L$, and $k$ is some nonnegative integer. We denote the set of all such expressions as $\L_\sigma$. If $k$ is the largest integer such that $f_k\ne 0$, we say that $k$ is the {\it $\sigma$-degree} of $f$.
\end{definition}
If $K=\Fq$, $L=\Fqn$, then $a^\sigma = a^{q^s}$ for all $a\in \Fqn$, and so a $\sigma$-linearized polynomial is a linearized polynomial in the usual sense \cite[Section 3.4]{Lidl}. A $\sigma$-linearized polynomial defines a $K$-linear map from $L$ to itself, and by \cite{GoQu2009b}, every such map can be represented by a unique $\sigma$-linearized polynomial of degree at most $n-1$. 
\begin{definition}
The {\it rank} and {\it nullity} of a $\sigma$-linearized polynomial $f$ are defined as the rank and nullity of the $K$-linear map from $L$ to itself determined by $f$.
\end{definition}

From here on we will refer to a rank-metric code without specifying the representation unless necessary.

\subsection{Linearity and Generator Matrices}
\label{ssec:additive}
\begin{definition}
A rank-metric code is {\it additive} if it is closed under addition; i.e. $a,b\in \C \Rightarrow a+b\in \C$.
\end{definition}
Clearly $\C$ being additive coincides precisely with it being a subspace over the prime field. A set of matrices may also be a subspace over a larger field, for example $\Fq$. In fact, either of the representations of Definition \ref{def:vec} or Definition \ref{def:linpoly} in fact allow us to consider codes to be linear over fields larger than $\Fq$.
\begin{definition}
A rank-metric code is {\it $F$-linear} if it is additive and closed under $F$-multiplication, for some subfield $F$ of $\FF_{q^{n}}$.
\end{definition}
Note that what we mean by ``$F$-multiplication'' will depend on the representation chosen. We will clarify this further in Section \ref{sec:idealisers}.


Suppose $\C$ is an $\Fqn$-linear code. Then we can view $\C$ as an $\Fqn$-subspace of $(\Fqn)^m$, and represent $\C$ by a {\it generator matrix}.
\begin{definition}
Let $\C$ be an $\Fqn$-linear code in $(\Fqn)^m$ with $\Fqn$-dimension $k$. Then a $k\times m$ matrix $G$ with entries in $\Fqn$ is called a {\it generator matrix} for $\C$ if its rows form an $\Fqn$-basis for $\C$.
\end{definition}

For additive codes, we have the following simple extension of the Singleton-like bound to codes over any field which is finite-dimensional over its prime subfield. 
\begin{theorem}
Suppose $\C\subset M_{n \times m}(K)$ is additive, and $d(\C) = d$. Let $E$ be the prime subfield of $K$, and $e=[K:E]$. Then
\begin{equation*}
\dim_E(\C)\leq en(m-d+1).
\end{equation*}
\end{theorem}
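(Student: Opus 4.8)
The plan is to imitate Delsarte's proof of the Singleton-like bound, but to replace its pigeonhole/counting step by a rank–nullity argument, which is the natural tool once additivity is in hand. First I would record that an additive code is in particular an $E$-subspace of $M_{n \times m}(K)$: closure under addition forces $0\in\C$ and, since $E$ is the prime subfield, closure under multiplication by every element of $E$ (this is exactly the observation, already made in the text, that additivity coincides with being a subspace over the prime field). Thus $\dim_E(\C)$ is well defined and the theorem is a comparison of two $E$-vector spaces.

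Next I would introduce the $E$-linear projection $\pi\colon M_{n \times m}(K)\to M_{n \times (m-d+1)}(K)$ that forgets all but a fixed set of $m-d+1$ of the coordinates (columns). Restricting to $\C$ gives an $E$-linear map $\pi|_\C$, whose target space has $E$-dimension $n(m-d+1)\cdot[K:E]=en(m-d+1)$. The whole statement will then follow from injectivity of $\pi|_\C$, via $\dim_E(\C)=\dim_E(\im\pi|_\C)\le \dim_E M_{n \times (m-d+1)}(K)=en(m-d+1)$.

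The heart of the matter is therefore to show $\ker(\pi|_\C)=\{0\}$. Suppose $X\in\C$ lies in the kernel, so the chosen $m-d+1$ columns of $X$ vanish; then $X$ is supported on at most $d-1$ columns, whence $\rank(X)\le d-1$. But $0\in\C$, so if $X\ne 0$ then $d(X,0)=\rank(X)\ge d(\C)=d$, a contradiction, forcing $X=0$. This is the one place where additivity is genuinely used — it is what lets us compare a single codeword's rank against the minimum distance by pairing it with the zero codeword — so I expect this to be the main (if modest) obstacle, and the step to phrase with care.

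Finally I would combine injectivity with the dimension count to conclude $\dim_E(\C)\le en(m-d+1)$. The only subtlety worth flagging is the standing hypothesis that $K$ is finite-dimensional over its prime subfield, which is precisely what guarantees both that $e=[K:E]$ is finite and that the target of $\pi$ has finite $E$-dimension. In the finite-field case $K=\Fq$ of characteristic $p$ with $E=\Fp$, one has $|\C|=p^{\dim_E(\C)}$, so the bound specialises to $|\C|\le p^{en(m-d+1)}=q^{n(m-d+1)}$, recovering the original Singleton-like bound for additive codes.
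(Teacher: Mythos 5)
Your proof is correct and is essentially the paper's argument in disguise: the paper shows that if $\dim_E(\C)>en(m-d+1)$ then $\C$ meets the subspace of matrices vanishing on $m-d+1$ fixed coordinates nontrivially, which is exactly the statement that your projection $\pi|_\C$ has nontrivial kernel, and both arguments finish by noting such an element would have rank at most $d-1$. Your phrasing via injectivity of the coordinate projection (and your use of columns rather than rows, which is the consistent choice given $m\le n$ and column rank) is just the contrapositive packaging of the same dimension count.
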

The proof follows from a simple dimension argument. If $\dim_E(\C)> en(m-d+1)$, then $\C$ must intersect nontrivially with the subspace of matrices with first $m-d+1$ rows all zero, and thus contain an element of rank at most $d-1$.

\subsection{Semifields}

A finite {\it presemifield} is a division algebra with a finite number of elements in which multiplication is not necessarily associative; if a multiplicative identity element exists, it is called a {\it semifield}. If we only assume left-distributivity of multiplication over addition, we have a {\it (left) quasifield}. Semifields were first studied by Dickson \cite{Dickson1905}, \cite{Dickson1906} in the early 1900s. The topic was further developed by Albert \cite{Albert1961} and Knuth \cite{Knuth1965} in the 1960s. This early research was motivated by the fact that semifields give rise to a certain class of projective planes. Much recent research on semifields has been performed due to surprising connections in finite geometry. We refer to \cite{Dembowski}, \cite{Handbook}, \cite{LaPo2011}, \cite{Kantor2006} for further background and references.

If $\SSS$ is a (pre)semifield, $n$-dimensional over $\Fq$, then we may identify the additive structure of $\SSS$ with the vector space $(\Fq)^n$. Let us denote multiplication in $\SSS$ by $\circ$. Then right-multiplication by a nonzero element of $\SSS$ defines an invertible $\Fq$-linear map from $(\Fq)^n$ to itself;
\[
R_y(x) := x\circ y.
\]
This is an $\Fq$-linear map because $\SSS$ is an algebra, and is invertible because $\SSS$ is a division algebra. Thus, by choosing a basis, we may view $R_y$ as an element of $M_{n\times n}(\Fq)$. Taking the set of all such $R_y$ gives a subspace of $M_{n\times n}(\Fq)$. In the literature on semifields, this is known as a {\it semifield spread set}. As every nonzero element is invertible, it coincides precisely with the definition of an $\Fq$-linear MRD code with minimum distance $n$. 

Two presemifields $\SSS=(\Fq^n,\circ)$ and $\SSS'=(\Fq^n,\circ')$ are said to be {\it isotopic} if there exist invertible additive maps $X,Y,Z$ on $\Fq^n$ such that for all $x,y\in \SSS$ it holds that 
\[
X(x\circ y) = Y(x)\circ'Z(y).
\]
Isotopism is considered the natural notion of equivalence for semifields, in part due to the fact that two semifields are isotopic if and only if the projective planes they co-ordinatise are isomorphic.

The following gives the well-known correspondence between semifields and MRD codes. See for example \cite{Handbook}, \cite{LaPo2011}, \cite{Willems}.

\begin{theorem}
Finite left quasifields of dimension $n$ over $\Fq$ are in one-to-one correspondence with MRD-codes in $M_{n\times n}(\Fq)$ with minimum distance $n$.

Finite presemifields of dimension $n$ over $\Fq$ are in one-to-one correspondence with $\Fq$-linear MRD-codes in $M_{n\times n}(\Fq)$ with minimum distance $n$.

Two semifields are isotopic if and only if their semifield spread sets are equivalent.

Isotopy classes of finite semifields of dimension $n$ over $\Fq$ are in one-to-one correspondence with equivalence classes of $\Fq$-linear MRD-codes in $M_{n\times n}(\Fq)$ with minimum distance $n$.
\end{theorem}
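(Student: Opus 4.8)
The plan is to make all four statements flow from a single explicit map. To a (pre)semifield $\SSS=(\Fq^n,\circ)$ I would associate its spread set $\C(\SSS)=\{R_y:y\in\SSS\}\subseteq M_{n\times n}(\Fq)$, where $R_y(x)=x\circ y$, and to invert this I would reconstruct a multiplication from a code. Fixing a nonzero $e\in\Fq^n$, I would first show that the evaluation map $M\mapsto M(e)$ is a bijection from a spread set onto $\Fq^n$, so the code can be indexed as $\{M_y:M_y(e)=y\}$, and then set $x\circ y:=M_y(x)$. The whole theorem then reduces to checking that these constructions produce objects of the claimed type, that they are mutually inverse, and that they intertwine the two notions of equivalence.

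For parts (1) and (2) the key steps are as follows. Each $R_y$ is $\Fq$-linear and, for $y\neq 0$, invertible: linearity is exactly the one-sided distributive law $(x_1+x_2)\circ y=x_1\circ y+x_2\circ y$ together with $\Fq$ lying in the relevant kernel, and invertibility is the loop/division property. Next, for $y\neq y'$ the difference $R_y-R_{y'}$ is invertible, since $(R_y-R_{y'})(x)=0$ with $x\neq 0$ forces $x\circ y=x\circ y'$, and bijectivity of left multiplication by $x$ gives $y=y'$. This yields $q^n$ matrices with nonsingular differences, i.e.\ an MRD code with $d=n$. The crucial distinction between the two parts is that $y\mapsto R_y$ is additive (hence $\Fq$-linear) precisely when the second distributive law $x\circ(y_1+y_2)=x\circ y_1+x\circ y_2$ also holds: a quasifield has only one distributive law, so $\C(\SSS)$ is merely a set, whereas a presemifield has both, so $\C(\SSS)$ is an $\Fq$-subspace. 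For the inverse direction I would verify that $x\circ y:=M_y(x)$ satisfies the distributivity and division axioms, that $\Fq$-linearity of the code yields an $\Fq$-bilinear presemifield product, and that the round trips recover the data (using $R_y(e)=e\circ y=y$ when $e$ is the identity, so $M_y=R_y$).

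For parts (3) and (4) I would translate isotopism into code equivalence directly. Rewriting $X(x\circ y)=Y(x)\circ' Z(y)$ as an identity of linear maps gives $X R_y=R'_{Z(y)}Y$, hence $R'_{Z(y)}=X R_y Y^{-1}$; letting $y$ range over $\SSS$ and using that $Z$ is a bijection, this says exactly $\C(\SSS')=X\,\C(\SSS)\,Y^{-1}$, an equivalence of codes with $A=X$, $B=Y^{-1}\in\GL_n$. Conversely, from $\C'=A\,\C\,B$ I would define $Z$ by $R'_{Z(y)}=A R_y B$ — well defined and $\Fq$-linear because $y\mapsto R_y$ and $z\mapsto R'_z$ are linear bijections — and read off $X=A$, $Y=B^{-1}$ to recover an isotopism. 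Part (4) is then the passage of this correspondence to equivalence classes, which is where the normalization ambiguities of parts (1)--(2) disappear.

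The main obstacle I expect is not any single computation but the careful bookkeeping of conventions and of the precise notion of equivalence. Three points need attention. (i) The left/right convention: only one family of multiplication maps is linearized by a one-sided distributive law, so for a quasifield the spread set must be built from the correct side. (ii) Normalization for an honest bijection: the forward map $\SSS\mapsto\C(\SSS)$ is canonical, but reconstructing $\circ$ from a code requires fixing $e$; for quasifields (which have an identity) one pins this down by matching with spread sets containing $0$ and $\id$, whereas for presemifields $\C(\SSS)$ is a canonically determined subspace yet the reverse still depends on $e$, so the cleanest one-to-one statement is really the class-level bijection of part (4). (iii) Matching the equivalence groups: since the isotopisms here use additive (prime-field linear) maps, any field automorphism appearing in code equivalence must be absorbed into $X,Y,Z$, and checking that the two resulting equivalence relations coincide and induce a well-defined bijection on classes is the delicate step.
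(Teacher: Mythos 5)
Your proposal is correct and follows essentially the same route as the paper, which does not prove this theorem but sets up exactly your construction in the preceding paragraphs (the spread set $\{R_y\}$ of right-multiplication maps, with invertibility of differences coming from the division property) and defers the remaining verification to the cited references. The bookkeeping issues you flag --- the left/right distributivity convention, the normalisation needed to invert the map, and the matching of additive isotopisms with the $(X,Y,\rho)$ notion of code equivalence --- are precisely the points glossed over in the paper's statement, and your handling of them is sound.
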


The study of semifields predates the study of MRD codes by over half a century, so it is perhaps unsurprising that there are more results on semifields than MRD codes. Indeed, many of the recent constructions for families of MRD codes were inspired by known constructions for semifields. We will outline some of these constructions in Section \ref{sec:constructions}.

%

\subsection{Subspace codes}
Some of the motivation for the recent interest in rank-metric codes is due to their connections to subspace codes. Subspace codes have applications in random network coding, due to the influential work of Koetter-Kchishang \cite{KoKs2008}, \cite{SiKsKo2008}. We refer also to \cite{NetworkBook}, \cite{Etzion} for further details and open problems.

A {\it subspace code} is a set of subspaces of a vector space $\Fq^N$, with distance defined by the {\it subspace distance}:
\[
d(U,V) := \dim(U+V)-\dim(U\cap V) = \dim(U)+\dim(V)-2\dim(U\cap V).
\]
Rank-metric codes in $M_{n \times m}(\Fq)$ give rise to {\it constant-dimension codes} in $\Fq^{m+n}$ via the process known as {\it lifting}. From a matrix $A\in M_{n \times m}(\Fq)$ we define the $n$-dimensional subspace $S_A$ of $\Fq^{m+n}$ by
\[
S_A = \{(x,Ax):x\in \Fq^m\}.
\]
Then the distance between two lifted subspaces is determined by the rank-distance;
\[
d(S_A,S_B) = 2\rank(A-B).
\]
From a code $\C$, we define the lifted code $S_\C := \{S_A:A\in \C\}$. Then a rank-metric code with high minimum distance gives rise to a subspace code with high minimum distance. However, lifted MRD codes are not usually optimal as subspace codes, except in the case of $m=n=d$, in which case we have the well-known correspondence between {\it spreads} and {\it spread sets}. Lifted MRD codes can often be extended to larger subspace codes, but again not always to optimal subspace codes. Some recent results on this can be found in \cite{CoPa}, \cite{Heinlein}, with further background and open problems in \cite{Etzion}.

\subsection{Segre Varieties and their secant varieties}
Geometrically speaking, we can view an $\Fq$-linear rank-metric code in $M_{n \times m}(\Fq)$ as a subspace in the projective space $\PG(mn-1,q)$, and an $\Fqn$-linear code as a subspace in $\PG(m-1,q^n)$. The set of matrices of rank one corresponds to a Segre variety in the former case, and a subgeometry in the latter case. Equivalence of $\Fq$-linear (resp. $\Fqn$-linear) rank-metric codes then corresponds to equivalence under the setwise stabiliser of the Segre variety (resp. subgeometry) inside the collineation group of the projective space.

The set of elements of rank at most $i$ then correspond to the $(i-1)$-st secant variety, and an MRD code corresponds to a maximal subspace disjoint from one of these secant varieties. We refer the interested reader to \cite{LavSem}, \cite{LunardonMRD}, \cite{SheekeyVdV} for further development of this correspondence.

\section{Properties and Operations for MRD codes}

\subsection{Rank Distribution}

Given a set $\C\subset M_{n \times m}(\Fq)$, we define the {\it rank-distribution} of $\C$, denoted $r(\C)$, by the $n$-tuple of nonnegative integers $(a_0,a_1,\ldots,a_n)$, where
\[
a_i = \#\{X\in \C:\rank(X) = i\}.
\]
We define the {\it weight enumerator} of $\C$, denoted by $W_{\C}(x,y)$, as the polynomial
\[
W_{\C}(x,y) = \sum_{i=0}^n a_i x^iy^{m-i}.
\]
Delsarte \cite{Delsarte1978} proved the following, using association schemes. Gabidulin \cite{Gab1985} gave a different proof in the case of $\Fqn$-linear codes). More elementary recent proofs can be found in \cite{Gadouleau}, \cite{DuGoMcGSh}.
\begin{theorem}[\cite{Delsarte1978}]
The rank distribution of an additive MRD code is completely determined by its parameters $q,m,n,$ and $d$. In particular, if $\C$ is an MRD code in $M_{n \times m}(\Fq)$ with minimum distance $d$, then
\[
a_i = {m\brack i} \sum_{s=0}^{i-d}(-1)^{{s\choose 2}}{i\brack s}(q^{n(d+i-s+1)}-1).
\]

\end{theorem}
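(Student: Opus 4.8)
The plan is to view each codeword $X \in \C$ as an $\Fq$-linear map $\Fq^m \to \Fq^n$ and to organise the count by the kernel of $X$, since a codeword of rank $i$ has kernel of dimension $m-i$. For a subspace $W \subseteq \Fq^m$ I would set $\C(W) = \{X \in \C : W \subseteq \ker X\}$, the shortened code of codewords vanishing on $W$, and write $g(W) = |\C(W)|$ and $f(W) = \#\{X \in \C : \ker X = W\}$. Then $g(W) = \sum_{W' \supseteq W} f(W')$ and $a_i = \sum_{\dim W = m-i} f(W)$. Since the number of $(m-i)$-dimensional subspaces of $\Fq^m$ is exactly ${m \brack i}$, this already accounts for the outer Gaussian binomial; everything then reduces to computing $f(W)$, which I expect to depend only on $\dim W$.

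The key step is the lemma that $g(W) = q^{n(i-d+1)}$ for every $W$ with $\dim W = m-i$ and $i \ge d$ (and $g(W)=1$ once $\dim W \ge m-d+1$, since then any codeword vanishing on $W$ has rank $<d$ and so is $0$). I would establish the uniform value by two matching inequalities. For the upper bound, note that $\C(W)$ factors through the quotient $\Fq^m/W \cong \Fq^i$, so it is an additive rank-metric code in $M_{n \times i}(\Fq)$ whose minimum distance is still at least $d$ (the rank is unchanged by passing to the quotient); the Singleton-like bound proved earlier then gives $|\C(W)| \le q^{n(i-d+1)}$. For the lower bound, I would consider the restriction map $\rho_W : \C \to \Hom(W, \Fq^n)$, whose kernel is precisely $\C(W)$; comparing dimensions over the prime subfield $E$ (with $q = p^{\,e}$) gives $\dim_E \C(W) \ge \dim_E \C - \dim_E \Hom(W,\Fq^n) = en(m-d+1) - en(m-i) = en(i-d+1)$, i.e. $|\C(W)| \ge q^{n(i-d+1)}$. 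The two bounds coincide, and crucially they force $g(W)$ to depend only on $\dim W$.

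With $g$ depending only on dimension I would finish by $q$-analogue Möbius inversion on the subspace lattice of $\Fq^m$. The interval $[W,\Fq^m]$ is isomorphic to the subspace lattice of $\Fq^{i}$, its Möbius function is $\mu(W,W') = (-1)^k q^{\binom{k}{2}}$ with $k = \dim W' - \dim W$, and there are ${i \brack k}$ subspaces $W' \supseteq W$ of dimension $m-i+k$. Hence $f(W) = \sum_{k \ge 0} (-1)^k q^{\binom{k}{2}} {i \brack k}\, g_k$, where $g_k = q^{n(i-k-d+1)}$ for $k \le i-d$ and $g_k = 1$ for $k > i-d$; multiplying by ${m \brack i}$ yields $a_i$. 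To bring this into the stated closed form I would split the sum at $k = i-d$ and absorb the tail using the $q$-binomial identity $\sum_{k=0}^{i} (-1)^k q^{\binom{k}{2}} {i \brack k} = 0$ (the $x=-1$ specialisation of the $q$-binomial theorem, which vanishes because of the factor $1-q^0$); this is exactly what merges the two pieces into a single summation with the factor of the form $q^{\,n(\cdots)}-1$.

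I expect the main obstacle to be the uniformity lemma $g(W) = q^{n(i-d+1)}$: this is the only place the MRD hypothesis is used, and it is used at both ends, since the upper bound needs the Singleton-like bound while the lower bound needs tightness of $\dim_E \C = en(m-d+1)$. The one subtlety is that $\C$ is only assumed additive, not $\Fq$-linear, so the dimension count must be run over the prime subfield $E$ throughout and the additive version of the Singleton-like bound invoked. Once the uniform value is secured, the remainder is bookkeeping with Gaussian binomials and the Möbius function of the subspace lattice.
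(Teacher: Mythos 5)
Your argument is correct and essentially complete, but note first that the paper itself gives no proof of this theorem: it is quoted from Delsarte, whose original argument runs through the machinery of association schemes, and the survey merely points to Gadouleau--Yan and Dumas--Gow--McGuire--Sheekey for ``more elementary recent proofs''. What you have written is essentially that elementary proof: stratify the code by kernels, prove the uniformity lemma $|\C(W)| = q^{n(i-d+1)}$ for every $(m-i)$-dimensional $W$ by squeezing between the Singleton-like bound applied to the induced code in $M_{n\times i}(\Fq)$ and the rank--nullity count for the restriction homomorphism $\rho_W$, then perform M\"obius inversion on the subspace lattice of $\Fq^m$. Both halves of the squeeze are sound; in particular you are right that the lower bound only needs $\rho_W$ to be a homomorphism of additive groups, so additivity over the prime subfield suffices and no $\Fq$-linearity is required. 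That is exactly the generality of the statement, and it is the place where the MRD hypothesis (tightness of $|\C|=q^{n(m-d+1)}$) is genuinely used. What this route buys over Delsarte's is self-containedness: it needs nothing beyond the Singleton-like bound already proved in the paper and the standard M\"obius function $(-1)^k q^{\binom{k}{2}}$ of the subspace lattice.

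One point you should be aware of: the formula your derivation produces,
\[
a_i = {m\brack i}\sum_{s=0}^{i-d}(-1)^{s}q^{\binom{s}{2}}{i\brack s}\bigl(q^{n(i-s-d+1)}-1\bigr),
\]
is the correct one, and it does not literally match the display in the statement, which has $(-1)^{\binom{s}{2}}$ where one expects $(-1)^{s}q^{\binom{s}{2}}$, and exponent $n(d+i-s+1)$ where one expects $n(i-s-d+1)$. The printed version must be a typographical slip: already at $i=d$ it would give $a_d = {m\brack d}(q^{n(2d+1)}-1)$, whereas the semifield case $m=n=d$ forces $a_n=q^n-1$, which is what your formula returns. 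So do not contort the tail-absorption step (the vanishing alternating sum $\sum_{k=0}^{i}(-1)^k q^{\binom{k}{2}}{i\brack k}=0$ for $i\geq 1$, which you invoke correctly) in an attempt to reach the printed exponent; your closed form is the right target.
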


\subsection{Equivalence and Automorphism groups}
\label{sec:idealisers}

Multiplying a matrix on the left or right by an invertible matrix does not change its rank. Given a set $\C\subset M_{n\times m})(\Fq)$, and a pair of matrices $X,Y$, we define
\[
X\C Y = \{ XA Y:A\in \C\}.
\]
Given an automorphism of $\Fq$ and a matrix $A$ with $(i,j)$-entries $A_{ij}\in\Fq$, we define $A^\rho$ as the matrix with $(i,j)$-entry $A_{ij}^\rho$. 

\begin{definition}
Two codes $\C_1$, $\C_2$ are said to be {\it equivalent} if there exist $X\in \GL(n,q),Y\in \GL(m,q)$, and $\rho\in \Aut(\Fq)$ such that $\C_2 =X\C_1^\rho Y$. If $\rho=1$ we say they are {\it properly equivalent}.
\end{definition}

\begin{definition}
The {\it automorphism group} of $\C$ is denoted by $\Aut(\C)$ and defined by $\Aut(\C) =\{(X,Y,\rho):X\C^\rho Y=\C\}$. The {\it proper automorphism group} is denoted by $\overline{\Aut}(\C)$.
\end{definition}

The corresponding action on a linearized polynomial $f$ by a pair of linearized polynomials $g,h$ and an automorphism $\rho$ is 
\[
f \mapsto g \circ f^\rho \circ h \mod (x^{\sigma^n}-x),
\]
where $f^\rho(x) = \sum_{i=0}^{m-1} f_i^{\rho} x^{\sigma^i}$. Note that $f^\rho(x) = (f(x^{\rho^{-1}}))^\rho$ for all $x$.

The corresponding action on a vector $v\in \Fqn^m$ is by a linearized polynomial $g$, a matrix $Y\in \GL(m,q)$, and an automorphism $\rho$ as follows:
\[
v \mapsto (g(v_1)^\rho,\cdots, g(v_m)^\rho) Y.
\]
\begin{definition}
The {\it left-idealiser} of a code $\C$ is defined as
\[
\I_{\ell}(\C) = \{X:X \in M_{n\times n}(\Fq), X\C\subseteq \C\}
\]
The {\it right idealiser} $\I_r(\C)$ is defined as
\[
\I_r(\C) = \{Y:Y \in M_{m\times m}(\Fq), \C Y\subseteq\C\}
\]
\end{definition}
If $\C$ is an additive MRD code with $m=n=d$, i.e. a semifield spread set, then the left- and right-idealisers are isomorphic to the {\it left- and middle-nuclei}. This in fact predates the notion of the idealisers of a code, but we adopt the terminology of idealisers here. Development of these ideas is due to \cite{MaPoNuc}, \cite{LiebNebe}, \cite{TrZh}, \cite{LuTrZhKernel}. These concepts are very useful when determining the newness of constructions.

We can now directly determine when a code consisting of matrices is equivalent to an $\Fqn$-linear code in $(\Fqn)^m$.

\begin{definition}
A code $\C$ in $M_{n \times m}(\Fq)$ is said to be $\Fqn$-linear if and only if its left-idealiser contains a subring isomorphic to $\Fqn$.
\end{definition}

Since all subspaces of $M_{n\times n}(\Fq)$ isomorphic to $\Fqn$ are similar, given two $\Fqn$-linear codes we may assume without loss of generality that their left-idealisers contain the same subspace isomorphic to $\Fqn$. From now on we will fix such a subspace, and by abuse of notation refer to it as $\C(\Fqn)$. This can be explicitly realised as the subring generated by the companion matrix of an irreducible polynomial of degree $n$ over $\Fq$.

The set of equivalences preserving $\Fqn$-linearity are those for which the matrix $Y$ is in the normaliser of $\C(\Fqn)$, which we denote by $N(\C(\Fqn))$. It is well known that this is a group isomorphic to the semidirect product of $\Fqn^*$ with a cyclic group of order $n$.

In terms of linearized polynomials, this corresponds to having $g(x) = \alpha x^{q^i}$, where $\alpha \in \Fqn^*$.  

\begin{definition}
Two codes $\C_1$, $\C_2$ with left-idealiser containing $\C(\Fqn)$ said to be {\it semi-linearly equivalent} if there exist $X\in N(\C(\Fqn)),Y\in \GL(m,q)$, and $\rho\in \Aut(\Fq)$ such that $\C_2 =  X\C_1^\rho Y$.
\end{definition}

\begin{theorem}[\cite{LiebNebe}]
Let $\C$ be an $\Fq$-linear code in $M_{n \times m}(\Fq)$. Then
\[
\overline{\Aut}(\C)\subset N(\cI_\ell(\C))\times N(\cI_r(\C)),
\]
where $N(\cI_\ell(\C))$ denotes the normaliser of $\cI_\ell(\C)$ in $\GL(m,q)$, and $N(\cI_r(\C))$ denotes the normaliser of $\cI_r(\C)$ in $\GL(n,q)$.
\end{theorem}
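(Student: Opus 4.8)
The plan is to show that any proper automorphism $(X,Y)$ of $\C$ must normalise the left- and right-idealisers, i.e.\ $X \in N(\cI_\ell(\C))$ and $Y \in N(\cI_r(\C))$. Let me reason about the role each of $X, Y$ plays. A proper automorphism satisfies $X\C Y = \C$, so equivalently $\C = X^{-1}\C Y^{-1}$. The key idea is that the idealisers are intrinsic invariants of the code: they are defined purely in terms of the multiplicative stabiliser of $\C$ on the left (resp.\ right), and so an equivalence of $\C$ with itself must carry this invariant structure to itself in a compatible way.

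First I would take an element $L \in \cI_\ell(\C)$, so $L\C \subseteq \C$, and investigate how conjugation by $X$ interacts with the automorphism relation. The natural computation is to consider the matrix $XLX^{-1}$ and ask whether it lies in $\cI_\ell(\C)$. Using $X\C Y = \C$, one writes $(XLX^{-1})\C = (XLX^{-1})(XLX^{-1})^{-1}\cdots$ — more cleanly, since $\C = X\C Y$, for any $A \in \C$ we have $A = XA'Y$ with $A' \in \C$, and then $(XLX^{-1})A = XLA'Y = X(LA')Y$; because $LA' \in \C$ (as $L$ is a left-idealiser element), this equals $X(\text{element of }\C)Y \subseteq X\C Y = \C$. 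Hence $XLX^{-1}$ stabilises $\C$ on the left, so $X\,\cI_\ell(\C)\,X^{-1} \subseteq \cI_\ell(\C)$. Applying the same argument to $X^{-1}$ (which is the left component of the inverse automorphism) gives the reverse containment, so $X$ normalises $\cI_\ell(\C)$. I would then run the symmetric argument on the right: for $R \in \cI_r(\C)$, conjugation by $Y$ gives $A(Y^{-1}RY) = X^{-1}A'Y^{-1}RY$ with the appropriate bookkeeping, landing back in $\C$ and yielding $Y \in N(\cI_r(\C))$.

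The bookkeeping in the previous step is where the main care is needed, and it is the step I expect to be the principal obstacle — not because it is deep, but because one must be precise about which inverse ($X^{-1}$ versus $Y^{-1}$) pairs with which side, and about the fact that $\C = X\C Y$ is equivalent to $X^{-1}\C Y^{-1} = \C$, so that the inverse automorphism has components $X^{-1}, Y^{-1}$ on the correct sides. One must verify that both the containment and its reverse hold so as to conclude equality of the conjugated idealiser with the original, rather than mere containment; this is guaranteed precisely because automorphisms form a group, so $(X,Y)^{-1}$ is again an automorphism and supplies the opposite inclusion.

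Finally, since the normalisers $N(\cI_\ell(\C))$ and $N(\cI_r(\C))$ taken inside the respective general linear groups are exactly the sets just shown to contain $X$ and $Y$, the containment $\overline{\Aut}(\C) \subset N(\cI_\ell(\C)) \times N(\cI_r(\C))$ follows. I note that the statement is only for the \emph{proper} automorphism group, so no field automorphism $\rho$ enters; were one to include $\rho$, one would additionally need $\cI_\ell(\C)^\rho = \cI_\ell(\C)$, but that is outside the present claim and I would not pursue it.
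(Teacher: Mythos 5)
The paper states this result without proof, citing \cite{LiebNebe}, so there is nothing internal to compare against; your argument is the standard one and it is correct in substance. The left-idealiser half is done properly: from $\C = X\C Y$ you write $A=XA'Y$, compute $(XLX^{-1})A = X(LA')Y \in X\C Y=\C$, and obtain the reverse inclusion from the inverse automorphism $(X^{-1},Y^{-1})$ (or, more cheaply, from finiteness, since conjugation is injective). The only flaw is exactly where you predicted: in the right-idealiser computation the displayed bookkeeping $A(Y^{-1}RY)=X^{-1}A'Y^{-1}RY$ is wrong, because with $A=X^{-1}A'Y^{-1}$ the factors $Y^{-1}Y^{-1}$ do not cancel; you should again use $A=XA'Y$, giving $A(Y^{-1}RY)=XA'RY=X(A'R)Y\in\C$, which shows $Y^{-1}\cI_r(\C)Y\subseteq\cI_r(\C)$ and hence $Y\in N(\cI_r(\C))$. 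With that correction the proof is complete. (One cosmetic remark: since $\cI_\ell(\C)\subseteq M_{n\times n}(\Fq)$ and $\cI_r(\C)\subseteq M_{m\times m}(\Fq)$, the normalisers should be taken in $\GL(n,q)$ and $\GL(m,q)$ respectively; the roles of $m$ and $n$ appear to be transposed in the theorem statement itself, not in your argument.)
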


This implies that for an $\Fqn$-linear code, the proper automorphism group is contained in $\GL(m,q)\times (\Fqn^*\times C_n)$.

\begin{theorem}[\cite{SheekeyVdV}]\label{thm:ShVdV}
Two $\Fqn$-linear codes are semilinearly equivalent if and only if they are equivalent.
\end{theorem}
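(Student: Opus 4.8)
The plan is to prove the two directions separately, with all the content in the converse. Since $N(\C(\Fqn))$ is a subgroup of $\GL(n,q)$, every semilinear equivalence is in particular an equivalence, which settles the easy direction. For the converse I must show that an \emph{arbitrary} equivalence $\C_2 = X\C_1^\rho Y$ between two $\Fqn$-linear codes can be taken to be semilinear; in fact I will show that its left component $X$ is \emph{forced} to normalise the fixed field $\C(\Fqn)$, so that no modification of the equivalence is needed at all.

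The first tool I would establish is the transformation law for idealisers under equivalence. If $M \in \I_\ell(\C_1)$, so $M\C_1 \subseteq \C_1$, then
\[ (XM^\rho X^{-1})\,\C_2 = X(M\C_1)^\rho Y \subseteq X\C_1^\rho Y = \C_2, \]
whence $X\,\I_\ell(\C_1)^\rho\,X^{-1} \subseteq \I_\ell(\C_2)$, with equality by applying the same computation to the inverse equivalence. After the normalisation already made in the text, $F := \C(\Fqn) \cong \Fqn$ lies in $\I_\ell(\C_1) \cap \I_\ell(\C_2)$. The structural fact I would isolate as a lemma is that for the codes in question the left idealiser is exactly this field: $F$ is a maximal subfield of $M_{n\times n}(\Fq)$, so once $\I_\ell$ is known to be a field it must equal $F$. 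Granting this, the transformation law gives $X F^\rho X^{-1} \subseteq \I_\ell(\C_2) = F$, hence $X F^\rho X^{-1} = F$ by comparing cardinalities.

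I would then pass to the vector representation, where $\C_1,\C_2$ are $\Fqn$-subspaces of $(\Fqn)^m$, left multiplication by $F$ is exactly $\Fqn$-scalar multiplication, and the equivalence is the map $\Psi\colon v \mapsto (g(v_1)^\rho,\dots,g(v_m)^\rho)Y$, with $g$ the $\Fq$-linearized polynomial representing $X$. The relation $X F^\rho X^{-1} = F$ says precisely that $\Psi$ conjugates the scalar action on $\C_1$ to the scalar action on $\C_2$: there is $\sigma \in \Aut(\Fqn)$ with $\Psi(\lambda v) = \sigma(\lambda)\Psi(v)$ for all $v \in \C_1$ and $\lambda \in \Fqn$. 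Since $Y$ has entries in $\Fq$, scalars pass through it, and comparing a coordinate on which $\C_1$ projects onto all of $\Fqn$ yields the functional equation
\[ g(\lambda z) = \tau(\lambda)\,g(z), \qquad \lambda, z \in \Fqn, \]
where $\tau(\lambda) = \sigma(\lambda)^{\rho^{-1}} \in \Aut(\Fqn)$.

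Finally I would invoke the $\Fq$-linearity of $g$. Taking $\lambda = c \in \Fq$ gives $\tau(c)\,g(z) = c\,g(z)$ for all $z$, so $\tau$ fixes $\Fq$ pointwise and hence $\tau \in \Gal(\Fqn/\Fq) = \langle x \mapsto x^q\rangle$, say $\tau(x) = x^{q^i}$; setting $z = 1$ then gives $g(x) = \alpha x^{q^i}$ with $\alpha = g(1) \in \Fqn^*$. This is exactly the form of the left component of a semilinear equivalence, so $X \in N(\C(\Fqn))$ and the equivalence was semilinear to begin with. The main obstacle is the structural step of the second paragraph: guaranteeing that $\Psi$ returns the fixed scalar field to itself, i.e. controlling $\I_\ell(\C_2)$ (its being the field $F$, as holds for the relevant MRD codes) so that the two a priori distinct copies of $\Fqn$ it contains must coincide, together with the careful bookkeeping of how the entrywise automorphism $\rho$ interacts with the identification of $\Fq^n$ with $\Fqn$.
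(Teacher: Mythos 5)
The paper itself does not prove this theorem --- it is quoted from \cite{SheekeyVdV} --- so there is no internal proof to compare against; judged on its own terms, your argument has a genuine gap. You set out to prove something strictly stronger than the statement, namely that \emph{every} equivalence between two $\Fqn$-linear codes is already semilinear, and this stronger claim is false: the theorem only asserts the \emph{existence} of a semilinear equivalence between equivalent codes. Concretely, $\C_1=\C_2=M_{n\times m}(\Fq)$ is $\Fqn$-linear in the sense of the paper (its left idealiser is all of $M_{n\times n}(\Fq)$, which certainly contains a copy of $\C(\Fqn)$), and $\C_2=X\C_1$ for every $X\in\GL(n,q)$, almost none of which lie in $N(\C(\Fqn))$ (that normaliser has order only $n(q^n-1)$). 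The theorem survives in this example only because the identity happens to be a semilinear equivalence; your argument, which tries to show that the \emph{given} $X$ normalises $F$, cannot.

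The failure is localised exactly where you flag it: the ``structural lemma'' that $\I_\ell(\C)=F$. This is not true for arbitrary $\Fqn$-linear codes (the paper's definition only requires the left idealiser to \emph{contain} a copy of $\Fqn$), and even for MRD codes it is a nontrivial imported theorem (of the Lunardon--Trombetti--Zhou type, needing hypotheses such as $d\ge 2$) rather than something one can ``grant''. When $\I_\ell(\C_2)$ properly contains $F$, your transformation law only yields $X\,\I_\ell(\C_1)^\rho X^{-1}=\I_\ell(\C_2)$ and hence two a priori distinct copies of $\Fqn$ inside $\I_\ell(\C_2)$; the actual content of the theorem is to \emph{replace} the given equivalence by another one, conjugating one copy onto the other while preserving $\C_2$, not to show the given one already works. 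The downstream computation (extracting $g(x)=\alpha x^{q^i}$ once one knows $XF^\rho X^{-1}=F$) is fine, as is the easy direction, so as written your proof covers only the special class of codes whose left idealiser is exactly a field of order $q^n$. A smaller point you gesture at but should make explicit: $XF^\rho X^{-1}=F$ says $X$ conjugates $F^\rho$ onto $F$, which amounts to $X\in N(\C(\Fqn))$ only after arranging $F^\rho=F$, e.g.\ by realising $F$ as generated by the companion matrix of an irreducible polynomial with coefficients in the prime field.
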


\begin{remark}
The notion of equivalence for MRD codes defined by semifields was well established, arising from the definition of isotopy and isomorphism for the related projective planes. We refer to \cite{LaPo2011}, \cite{Handbook} for further details.

A detailed treatment of equivalence and semilinear equivalence for rank-metric codes can be found in \cite{Morrison}, building on \cite{Berger}, \cite{HuaWan}. Some issues remained in the interplay between the two notions of equivalence, which we believe has now been resolved by Theorem \ref{thm:ShVdV}.
\end{remark}

\begin{remark}
If $n=m$, then the transpose operation for matrices also preserves rank. For this reason it is sometimes included in the definition of equivalence. Note also that some literature omits the action of $\Aut(\Fq)$ in the definition of equivalence.

When considering additive MRD codes with $n=m=d$, i.e. semifield spread sets, there is a further action of $S_3$ on such codes which preserves the MRD property, arising from the theory of {\it nonsingular tensors} \cite{Knuth1965}. The set of equivalence classes under this action is known as the {\it Knuth orbit}.
\end{remark}

\subsection{Puncturing and shortening}

Puncturing and shortening are well-known operations on codes in the Hamming-metric. There exist natural rank-metric analogues, which we present here. For an in-depth treatment of this we refer to \cite{ByrneRavagnani}. If we view a matrix as a map between vector spaces, then by restricting the domain and/or range to a subspace, we can obtain a matrix of smaller size. As we can view a matrix either as acting on row vectors or column vectors, there are various ways to do this. 

Given $A\in M_{n \times m}(\Fq)$, we define the {\it right-image} of $A$ by $\im_r(A) := \{Av:v\in \Fq^m\}\leq \Fq^n$. Similarly we define the {\it left-image} as $\im_l(A) := \{v^TA:v\in \Fq^n\}\leq \Fq^m$. Right- and left-kernels are similarly defined. For a subspace $U$ of $\Fq^k$ we define $U^* = \{v\in \Fq^k:v^Tu = 0~\forall u\in U\}$.

\begin{lemma}
For any subspace $U\leq \Fq^m$, any subspace $W\leq \Fq^n$, and any set $\C\subset M_{n\times m}(\Fq)$, we have that
\begin{align*}
\{A:A\in \C, \im_l(A)\leq U^*\} &= \{A:A\in \C, \ker_r(A)\geq U\}\\
\{A:A\in \C, \im_r(A)\leq W^*\} &= \{A:A\in \C, \ker_l(A)\geq W\}.
\end{align*}
\end{lemma}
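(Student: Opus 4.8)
The plan is to prove the two stated set equalities by unwinding the definitions of left-image, right-kernel, and the annihilator operation $U^*$, and showing that the defining conditions on $A$ are literally equivalent. I would treat the first equality in detail and note that the second follows by the symmetric argument (interchanging the roles of rows and columns, i.e. replacing $A$ by $A^T$, $m$ by $n$, and left by right). Since the intersection with $\C$ is common to both sides, it suffices to show that for a fixed matrix $A\in M_{n\times m}(\Fq)$, the condition $\im_l(A)\leq U^*$ holds if and only if $\ker_r(A)\geq U$.

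First I would spell out both conditions. By definition $\im_l(A) = \{v^TA : v\in \Fq^n\}$, and $U^* = \{w\in \Fq^m : w^Tu = 0\ \forall u\in U\}$. Thus $\im_l(A)\leq U^*$ means: for every $v\in \Fq^n$, the row vector $v^TA$ lies in $U^*$, i.e. $(v^TA)u = 0$ for all $u\in U$. On the other side, $\ker_r(A) = \{u\in \Fq^m : Au = 0\}$, so $\ker_r(A)\geq U$ means $Au = 0$ for every $u\in U$. The whole proof then reduces to the identity $v^T(Au) = (v^TA)u$, which is just associativity of matrix multiplication.

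The key step is the following chain of equivalences, which I would present as a short computation:
\[
\im_l(A)\leq U^* \iff \forall v\in\Fq^n,\ \forall u\in U:\ (v^TA)u = 0 \iff \forall v\in\Fq^n,\ \forall u\in U:\ v^T(Au) = 0.
\]
Now for a fixed $u$, the condition $v^T(Au)=0$ for all $v\in \Fq^n$ says precisely that $Au$ is orthogonal to the whole space $\Fq^n$, which over a field forces $Au = 0$ (the standard bilinear form on $\Fq^n$ is nondegenerate). Hence the displayed condition is equivalent to $Au = 0$ for all $u\in U$, i.e. to $U\leq \ker_r(A)$. Intersecting with membership in $\C$ on both sides yields the first claimed equality.

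I do not expect any genuine obstacle here; the statement is essentially a bookkeeping lemma, and the only point requiring a word of care is the nondegeneracy step ($v^Tw=0$ for all $v$ implies $w=0$), which I would justify by taking $v$ to range over the standard basis so that each coordinate of $w=Au$ is individually zero. The mildly error-prone part is keeping the left/right and row/column conventions consistent throughout, so I would fix the convention that $A$ acts on column vectors on the right and state once that the second equality is obtained by applying the first to the transposed situation, rather than re-deriving it from scratch.
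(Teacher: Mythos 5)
Your proof is correct: the paper states this lemma without any proof, treating it as immediate from the definitions, and your argument (unwinding $\im_l(A)\leq U^*$ via $v^T(Au)=(v^TA)u$ and nondegeneracy of the dot product, then transposing for the second identity) is exactly the verification the author is implicitly relying on. Your reading of the conventions — $\ker_r(A)=\{u\in\Fq^m:Au=0\}$ as the kernel of the map whose image is $\im_r$, and dually for $\ker_l$ — is the one that makes the statement true, so no further comment is needed.
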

Suppose $\C\subset M_{n \times m}(\Fq)$ is such that there exists an $(m-s)$-dimensional subspace $U$ of $\Fq^m$ with $\im_r(A)\subset U^*$ for all $A\in \C$. Then we may view $\C$ as a subset of $\Hom(\Fq^n,U^*)$, which is isomorphic to $M_{n\times s}(\Fq)$. 
\begin{definition}
Given a code $\C\subset M_{n \times m}(\Fq)$, an $(m-s)$-dimensional subspace $U$ of $\Fq^m$, and an $(n-t)$-dimensional subspace $W$ of $\Fq^n$, we define the {\it row-shortening by $U$} of $\C$ as the set
\[
\cS_{U,row}(\C) = \{A:A\in \C, \im_l(A)\leq U^*\}\subset\Hom(\Fq^m,U^*)\simeq M_{n\times s}(\Fq),
\]
and the {\it column-shortening by $W$} of $\C$
\[
\cS_{W,col}(\C) = \{A:A\in \C, \im_r(A)\leq W^*\}\subset\Hom(U,\Fq^m)\simeq M_{m\times t}(\Fq).
\]
\end{definition}

A proof of the following well-known fact can be found in \cite[Theorem 1]{DuGoMcGSh}.
\begin{theorem}
Let $\C$ be an $\Fq$-linear MRD code in $M_{n \times m}(\Fq)$ with minimum distance $d$. Then any row-shortening of $\C$ by an $(m-s)$-dimensional subspace of $\Fq^m$ is an MRD code in $M_{n\times s}(\Fq)$ with minimum distance $d$.
\end{theorem}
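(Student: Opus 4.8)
The plan is to sandwich the size of the shortened code between a rank-nullity lower bound and the Singleton-like bound, while observing that rank is unchanged under the relevant restriction so that the minimum distance is controlled essentially for free. Write $U^{*}$ for the $s$-dimensional annihilator of the $(m-s)$-dimensional subspace $U$ (so that the ambient space is indeed $M_{n\times s}(\Fq)$, which presupposes $d\le s$). First I would use the Lemma to rewrite the shortening intrinsically,
\[
\cS_{U,row}(\C)=\{A\in\C:\im_l(A)\le U^{*}\}=\{A\in\C:\ker_r(A)\ge U\}=\{A\in\C:Au=0\ \forall u\in U\}.
\]
Every such $A$ annihilates $U$, hence factors through the quotient $\Fq^{m}/U$, whose dimension is $s$; this realises $\cS_{U,row}(\C)$ inside $M_{n\times s}(\Fq)$ exactly as in the definition. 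Crucially, since $U\le\ker_r(A)$, the image of the factored map coincides with $\im_r(A)$, so the rank of the factored $n\times s$ matrix equals $\rank(A)$ computed in $M_{n\times m}(\Fq)$.

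Next I would introduce the restriction map $\phi\colon\C\to\Hom(U,\Fq^{n})$, $A\mapsto A|_{U}$, which is $\Fq$-linear because $\C$ is. Its kernel is precisely $\cS_{U,row}(\C)$, and its codomain has $\Fq$-dimension $n(m-s)$. Since $\C$ is $\Fq$-linear and MRD, $\dim_{\Fq}\C=n(m-d+1)$, so rank-nullity yields
\[
\dim_{\Fq}\cS_{U,row}(\C)=\dim_{\Fq}\C-\dim_{\Fq}\im(\phi)\ge n(m-d+1)-n(m-s)=n(s-d+1),
\]
and hence $|\cS_{U,row}(\C)|\ge q^{\,n(s-d+1)}$. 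For the reverse inequality I would invoke the first paragraph: every nonzero codeword of $\cS_{U,row}(\C)$ has rank at least $d$, so $\cS_{U,row}(\C)$ is a code in $M_{n\times s}(\Fq)$ of minimum distance at least $d$, and the Singleton-like bound forces $|\cS_{U,row}(\C)|\le q^{\,n(s-d+1)}$. Combining the two inequalities gives equality, so the shortened code meets the bound and is MRD; and if its true minimum distance were some $d'>d$, the Singleton bound for $d'$ would contradict its size, so the distance is exactly $d$.

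The only point requiring genuine care is the invariance of rank under the passage to $\Fq^{m}/U$ established in the first paragraph; once that is in hand, the lower bound (rank-nullity, using $\dim_{\Fq}\C=n(m-d+1)$) and the upper bound (Singleton) are both immediate, and the MRD conclusion is forced by the squeeze. I therefore expect the main obstacle to be presentational rather than mathematical, namely pinning down the column/annihilator conventions so that $\Hom(U,\Fq^{n})$ really has dimension $n(m-s)$ and the factored codeword really lands in $M_{n\times s}(\Fq)$. As a remark, the squeeze in fact shows that $\phi$ is surjective, i.e.\ every $\Fq$-linear map $U\to\Fq^{n}$ extends to a codeword of $\C$, which is a clean alternative packaging of the same computation.
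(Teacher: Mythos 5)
Your argument is correct and is essentially the standard proof that the paper outsources to \cite{DuGoMcGSh}: a dimension count (rank--nullity for the restriction map to $U$, equivalently intersecting $\C$ with the $ns$-dimensional space of matrices killing $U$) gives the lower bound $q^{n(s-d+1)}$, rank-invariance under factoring through $\Fq^m/U$ gives minimum distance at least $d$, and the Singleton-like bound closes the squeeze. The caveats you flag ($d\le s$, and the identification of the quotient with $\Fq^s$ so that rank is preserved) are exactly the right ones, and your closing remark that the restriction map is surjective is a correct byproduct of the equality case.
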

Note that the analogous statement for column-shortening is not necessarily true, unless $m=n$. It is important to recall when applying this theorem that we are assuming that $m\leq n$.

Suppose $\C\subset M_{n \times m}(\Fq)$, and $X\in M_{n\times n}(\Fq)$ is a matrix of rank $t$. Then the set $X\C$ is a set of matrices, each of whose right-image is contained in $\im_r(X)$. Thus we may view $X\C$ as a subset of $M_{t\times m}(\Fq)$.

\begin{definition}
A {\it row-puncturing} of a code $\C$ is the row-shortening of $X\C$ by $\im_r(X)$, and denoted by $\cP_{row,X}(\C)$. 

A {\it column-puncturing} of a code $\C$ is the column-shortening of $\C Y$ by $\im_l(Y)$, and denoted by $\cP_{col,Y}(\C)$. 
\end{definition}

\begin{remark}
Note that shortening changes the dimension but not the minimum distance, while puncturing changes the minimum distance but not the dimension. Both change the ambient space.

In \cite{ByrneRavagnani} puncturing and shortening are defined slightly differently. However the definitions are equivalent.

Another equivalent way to define puncturing is by the set $X\C$ for some $X\in M_{t\times n}(\Fq)$ or the set $\C Y$ for some $X\in M_{n\times s}(\Fq)$.
\end{remark}

\section{Constructions of MRD codes}
\label{sec:constructions}

\subsection{Semifields; $n=m=d$}

There are a wide variety of known constructions for semifields. We will not list them here; we instead refer to \cite{Kantor2006}, \cite{LaPo2011} for an overview. Many constructions are valid only in special cases; for example two-dimensional over a nucleus. Here we list a collection of constructions which are valid in many parameters.

{\bf Albert's generalised twisted fields \cite{Albert1961}.} Elements of $\Fqn$, with multiplication defined by $x\circ y = xy-cx^{p^i}y^{p^j}$, where $c$ is fixed with $N(c)\ne 1$. It gives semifields of arbitrary prime power order, excluding $2^e$ for prime $e$.

{\bf Petit's cyclic semifields  \cite{Petit}} (independently found by Jha-Johnson \cite{JHJO1989}). Elements are {\it skew-polynomials} over $\Fqn$ with degree at most $k-1$. Multiplication is defined by $a\circ b = ab \modr f$, where $f$ is an irreducible of degree $k$, and $\modr$ denotes remainder on right-division. It gives semifields of order $q^{ns}$, where $q=p^e$ for some $e>1$.

{\bf Knuth's binary semifields.} A family of commutative semifields in even characteristic. Kantor \cite{KantorComm} generalised this to a large family, which grows exponentially in the order, also in even characteristic. 

{\bf Pott-Zhou's family of commutative semifields \cite{PottZhou}.} A large family of commutative semifields of order $p^{2n}$.

\subsection{Delsarte-Gabidulin codes}

Let $K$ be a field, $L$ a cyclic Galois extension of degree $n$, and $\sigma$ a generator of $\Gal(L:K)$. Let $\G_{k,\sigma} $ denote the set of $\sigma$-linearized polynomials of degree at most $k-1$; i.e.
\[
\G_{k,\sigma} = \{ f_0x+f_1x^\sigma +\cdots+f_{k-1}x^{\sigma^{k-1}} : f_i\in L \}.
\]
The following lemma, \cite[Theorem 5]{GoQu2009b}, is key to the construction of the first and most well-known family MRD codes.
\begin{lemma}
\label{lem:nullity}
The nullity of a $\sigma$-linearized polynomial is at most its $\sigma$-degree.
\end{lemma}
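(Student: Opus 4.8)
The plan is to show that a nonzero $\sigma$-linearized polynomial $f$ of $\sigma$-degree $k$ has kernel (as a $K$-linear map on $L$) of dimension at most $k$. The natural approach is to bound the number of roots of $f$ in $L$, since the kernel of $f$ is precisely the set of roots of $f(x)=0$ lying in $L$, and for a $\sigma$-linearized map this root set is a $K$-subspace of $L$ whose dimension equals the nullity. Thus it suffices to show that $f$ has at most $q^k$ roots in $L$ when $K=\Fq$, or more generally that the root-space has $K$-dimension at most $k$.

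First I would observe that $f(x) = f_0 x + f_1 x^\sigma + \cdots + f_k x^{\sigma^k}$ with $f_k \neq 0$ is not an ordinary polynomial of degree $k$, so I cannot invoke the fundamental-theorem-of-algebra bound directly. Instead, in the classical case $L=\Fqn$, $\sigma\colon a\mapsto a^{q^s}$ with $s$ coprime to $n$, the polynomial $f(x)$ rewrites as an ordinary $\Fqn$-polynomial $\sum_i f_i x^{q^{si}}$ whose degree is $q^{sk}$. Counting roots with multiplicity then gives at most $q^{sk}$ roots in $L$; the key extra input is that $f$ is a \emph{separable} polynomial (its derivative is the nonzero constant $f_0$, or if $f_0=0$ one factors out a power of $x$ which is itself $\sigma$-linearized), so all $q^{sk}$ roots are distinct. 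Since the root set is an $\Fq$-subspace and $s$ is coprime to $n$, having at most $q^{sk}$ roots forces its $\Fq$-dimension to be at most $k$, which is exactly the nullity bound.

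The cleaner and more general route, which avoids characteristic-dependent separability bookkeeping and covers the abstract cyclic-extension setting of Definition~\ref{def:linpoly}, is to argue by induction on the $\sigma$-degree $k$. The base case $k=0$ says a nonzero scalar map $x\mapsto f_0 x$ is injective, so has nullity $0$. For the inductive step, pick any nonzero element $\lambda$ in the kernel of $f$; if the kernel is trivial we are done, so assume it is nontrivial. The idea is to produce from $f$ a $\sigma$-linearized polynomial of strictly smaller $\sigma$-degree whose kernel is obtained from that of $f$ by quotienting out a one-dimensional piece. Concretely, one can consider the composition of $f$ with the $\sigma$-linearized map $x\mapsto x^\sigma - (\lambda^\sigma/\lambda)\, x$ (or an analogous factor), which annihilates $\lambda$ and reduces the top-degree term, so that $f$ factors as $g\circ h$ with $h$ of $\sigma$-degree $1$ having $\lambda$ in its kernel and $g$ of $\sigma$-degree $k-1$; then $\dim\ker f \leq \dim\ker h + \dim\ker g \leq 1 + (k-1) = k$ by the inductive hypothesis.

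The main obstacle I expect is justifying the factorisation step cleanly in the abstract setting: one must verify that the set $\L_\sigma$ of $\sigma$-linearized polynomials, under composition modulo the relation $x^{\sigma^n}=x$, behaves like a (noncommutative) skew-polynomial ring in which right-division and hence factoring out a root are valid operations, and that the $\sigma$-degrees add under composition. Making precise that a kernel element $\lambda$ corresponds to a right factor of $\sigma$-degree one — the noncommutative analogue of the factor theorem — is the crux; once that is in place the dimension count and induction close the argument immediately. I would lean on the skew-polynomial-ring structure (the same structure underlying Petit's construction cited later) to supply this factor theorem, rather than reproving it from scratch.
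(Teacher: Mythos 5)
The paper does not actually prove this lemma: it cites \cite[Theorem 5]{GoQu2009b} and only remarks that in the special case $L=\Fqn$, $x^\sigma=x^q$ the statement is a trivial consequence of the fact that $f$ is then an ordinary polynomial of degree $q^k$. Your second route --- induction on the $\sigma$-degree via a right factor of degree one in the skew-polynomial ring --- is a correct, self-contained argument in the general cyclic-extension setting, and it is essentially the standard proof underlying the cited reference. The factor theorem you worry about is unproblematic: right division gives $f=g\circ h+r$ with $r(x)=r_0x$, and evaluating at a nonzero kernel element $\lambda$ of $h$ forces $r_0=0$; degrees add under composition because $L$ has no zero divisors (you should work in $L[x;\sigma]$ itself here, not the quotient by $x^{\sigma^n}-x$, since reduction can lower degrees). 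The dimension count $\dim\ker f\leq\dim\ker h+\dim\ker g$ then closes the induction. So this part of your proposal supplies more than the paper does, which is a genuine plus.

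Two genuine flaws, one in each route. First, in your ``classical case'' argument for $\sigma\colon a\mapsto a^{q^s}$: a $\sigma$-degree-$k$ polynomial is an ordinary polynomial of degree $q^{sk}$, so root counting bounds the kernel size by $q^{sk}$ and hence its $\Fq$-dimension by $sk$, \emph{not} by $k$. The coprimality of $s$ and $n$ does not rescue this; it only guarantees that $\sigma$ generates the Galois group. This is precisely why the paper's remark restricts to $x^\sigma=x^q$, and why Gabidulin--Kshevetskiy's generalisation to $s>1$ required a separate argument. (The separability digression is also unnecessary: an upper bound on the number of roots needs no separability.) Second, in the inductive route there is a small circularity at $\sigma$-degree $1$: the bound $\dim\ker h\leq 1$ for $h(x)=x^\sigma-(\lambda^\sigma/\lambda)x$ is itself the $k=1$ instance of the lemma, so it must be proved directly --- e.g.\ if $x,y$ are nonzero kernel elements then $(x/y)^\sigma=x/y$, so $x/y$ lies in the fixed field $K$ of $\sigma$, whence the kernel is at most one-dimensional over $K$. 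This uses that $\sigma$ generates $\Gal(L:K)$, which is part of the hypothesis; once supplied, your induction is complete.
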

Note that if $L=\Fqn$ and $x^\sigma = x^q$, then this lemma is a trivial consequence of the fact that a $\sigma$-linearized polynomial of degree $k$ is a polynomial of degree $q^k$.

Now $\dim_K(\G_{k,\sigma})=nk$, and by Lemma \ref{lem:nullity} every non-zero element of $\G_{k,\sigma}$ has nullity at most $k-1$, and hence rank at least $n-k+1$. Thus $\G_{k,\sigma}$ gives an MRD-code in $M_{n\times n}(\Fq)$ with minimum distance $d=n-k+1$. 

\begin{definition}
We call $\G_{k,\sigma}$, and any code obtained from $\G_{k,\sigma}$ by puncturing or shortening, a {\it Delsarte-Gabidulin code}.
\end{definition}

As $\G_{k,\sigma}$ is $\Fqn$-linear, its representation as vectors is an $\Fqn$-subspace of $(\Fqn)^m$. It has generator matrix has the form
\[
\npmatrix{\alpha_1&\alpha_2&\cdots&\alpha_m\\\alpha_1^{\sigma}&\alpha_2^{\sigma}&\cdots&\alpha_m^{\sigma}\\\vdots&\vdots&\ddots&\vdots\\\alpha_1^{\sigma^{k-1}}&\alpha_2^{\sigma^{k-1}}&\cdots&\alpha_m^{\sigma^{k-1}}},
\]
where the $\alpha_i$'s are elements of $\Fqn$, linearly independent over $\Fq$. Characterisations of Delsarte-Gabidulin codes in terms of their generator matrix can be found in \cite{HoMa}, \cite{Neri}.

\begin{remark}
Delsarte first discovered these in the case of $L=\Fqn$ a finite field and $x^\sigma = x^q$, using bilinear forms. Gabidulin independently discovered this using vectors in $(\Fqn)^m$, and again by Roth \cite{Roth1991} and Cooperstein \cite{CoopersteinM}. Much of the subsequent literature refers to these as {\it Gabidulin codes}. Gabidulin-Kshevetskiy extended this to the case $x^{\sigma} = x^{q^s}$, which became known as {\it generalised Gabidulin codes}. The case of general fields with cyclic Galois extension was shown in \cite{GoQu2009a}, and later independently in \cite{AuLoRo2013}. In \cite{LiebNebe} these were referred to as {\it Gabidulin-like codes}.

Since all of these can be constructed in a unified way requiring little extra theory, and are MRD for essentially the same reason, we will refer to all of these from now on as Delsarte-Gabidulin codes.
\end{remark}

%


The Delsarte-Gabidulin construction answered the question of the existence of MRD codes completely; such codes exist for any parameters. For many years this meant that searching for new MRD codes was not a priority. However in recent years, Delsarte-Gabidulin codes have proved unsuitable for some applications, in particular cryptography (see Section \ref{sec:crypto}), and so new constructions are required.


\subsection{MRD subcodes of Delsarte-Gabidulin codes}
In recent years, due to the necessity for new constructions for the purposes of applications, new constructions for additive MRD codes have been found. These all share the common theme of perturbing the Delsarte-Gabidulin codes by adding extra coefficients to the $\sigma$-linearized polynomials in such a way as to keep the rank high. All rely on the following important lemma \cite[Theorem 10]{GoQu2009a}.
\begin{lemma}
If the nullity of a $\sigma$-polynomial $f(x)$ of $\sigma$-degree $k$ is equal to $k$, then $N(f_0) = (-1)^{nk}N(f_k)$.
\end{lemma}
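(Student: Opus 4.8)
The plan is to reduce to the monic case and then extract the constant coefficient from a Moore-type determinant. First I would divide through by the leading coefficient: set $g = f_k^{-1}f$, a monic $\sigma$-linearized polynomial of $\sigma$-degree $k$ with the same kernel $V := \ker f$ and with $g_0 = f_0/f_k$. Since the norm $N = N_{L/K}$ is multiplicative, $N(f_0) = N(f_k)\,N(g_0)$, so the claim reduces to showing $N(g_0) = (-1)^{nk}$. By hypothesis $\dim_K V = k$, and by Lemma \ref{lem:nullity} no nonzero $\sigma$-polynomial of $\sigma$-degree strictly less than $k$ can vanish on all of $V$ (its nullity would exceed its $\sigma$-degree). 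This makes $g$ the \emph{unique} monic $\sigma$-polynomial of $\sigma$-degree $k$ vanishing on $V$.

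The key step is an explicit determinantal formula for $g$. Fixing a $K$-basis $v_1,\dots,v_k$ of $V$, I would write $g(x)$ as the $(k+1)\times(k+1)$ Moore-type determinant with first column $(x,x^\sigma,\dots,x^{\sigma^k})^{T}$ and remaining columns $(v_j,v_j^\sigma,\dots,v_j^{\sigma^k})^{T}$, divided by the $k\times k$ Moore determinant $\delta := \det\big(v_j^{\sigma^{\,i-1}}\big)_{i,j=1}^{k}$ and normalised to be monic. This determinant vanishes on each $v_j$ (a repeated column) and has $\sigma$-degree $k$, so after scaling it must coincide with $g$. Moreover $\delta \neq 0$: a nontrivial $L$-linear dependence among the rows of the Moore matrix would produce a nonzero $\sigma$-polynomial of $\sigma$-degree at most $k-1$ vanishing on $V$, again contradicting Lemma \ref{lem:nullity}. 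Expanding the large determinant along its first column, the cofactor of $x^{\sigma^k}$ is $\pm\delta$ while the cofactor of $x=x^{\sigma^0}$ is $\pm\delta^{\sigma}$; tracking both signs through the monic normalisation yields
\[
g_0 = (-1)^k\,\frac{\delta^{\sigma}}{\delta}.
\]

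Finally I would apply the norm. Because $N$ is a cyclic product over $\Gal(L:K)$ it is $\sigma$-invariant, so $N(\delta^{\sigma}) = N(\delta)$ and the $\delta$-factors cancel, leaving $N(g_0) = N(-1)^k$. Since $\sigma$ fixes $-1\in K$, we have $N(-1) = (-1)^n$, whence $N(g_0) = (-1)^{nk}$ and therefore $N(f_0) = (-1)^{nk}N(f_k)$, as claimed. Note this also forces $f_0 \neq 0$, consistent with $g_0 \neq 0$.

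I expect the main obstacle to be the bookkeeping in the determinantal step: setting up the Moore representation of $g$ correctly, deriving $\delta \neq 0$ from Lemma \ref{lem:nullity} rather than quoting it, and—most error-prone—pinning down the factor $(-1)^k$ by carefully comparing the cofactor expansions of the $x^{\sigma^k}$- and $x$-coefficients against the monic normalisation. Once the sign is correct the passage to norms is immediate. Over a finite field one can instead verify $N(g_0)=(-1)^{nk}$ by a direct count: grouping $V\setminus\{0\}$ into the $(q^k-1)/(q-1)$ lines through the origin, each line contributes a factor $(-1)^n$ to $N\big(\prod_{v\in V\setminus\{0\}} v\big)$, and $(q^k-1)/(q-1)\equiv k \bmod 2$ in odd characteristic.
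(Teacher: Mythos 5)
The paper does not actually prove this lemma --- it is quoted verbatim from \cite{GoQu2009a} (Theorem 10 there) as an imported tool --- so there is no internal proof to compare against. Your argument is correct and self-contained. Reducing to the monic case via multiplicativity of the norm is the right first move, and the heart of the proof --- that the unique monic $\sigma$-polynomial of $\sigma$-degree $k$ annihilating the $k$-dimensional kernel $V$ has constant term $g_0 = (-1)^k\,\delta^\sigma/\delta$ with $\delta$ the Moore determinant of a basis of $V$ --- checks out: expanding the $(k+1)\times(k+1)$ determinant down the $x$-column, the cofactor of $x^{\sigma^k}$ is $(-1)^k\delta$ and that of $x$ is $\delta^\sigma$, giving exactly your formula after normalisation. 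Your use of Lemma \ref{lem:nullity} is also correctly placed twice: once to force $\delta\neq 0$ (a row dependence of the Moore matrix would yield a nonzero $\sigma$-polynomial of $\sigma$-degree at most $k-1$ with nullity $k$) and once for uniqueness of $g$. The passage to norms via $N(\delta^\sigma)=N(\delta)$ and $N(-1)=(-1)^n$ is immediate, and this argument works over any field with a cyclic Galois extension, matching the generality in which the lemma is stated. The closing finite-field count is a nice independent check in odd characteristic (and vacuous in characteristic $2$, where signs are irrelevant), though it is not needed. The only cosmetic point is that you should establish $\delta\neq 0$ \emph{before} asserting that the big determinant has $\sigma$-degree exactly $k$; as written the order is reversed, but nothing is circular.
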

Thus we can find MRD codes contained in the Delsarte-Gabidulin code $\G_{k+1}$ by finding a $k$-dimensional $\Fqn$-subspace of $\G_{k+1}$ which avoids this condition on the first and last coefficients. The most straightforward way to do this is by finding additive functions $\phi_1,\phi_2$ with the below property.

\begin{proposition}
Suppose $\phi_1,\phi_2$ are two additive functions from $\Fqn$ to itself, and $k\leq n-1$. Let $\HH_k(\phi_1,\phi_2) $ be the set of $\sigma$-linearized polynomials
\[
\HH_k(\phi_1,\phi_2) = \left\{\phi_1(a)x+\left(\sum_{i=1}^{k-1}f_i x^{\sigma^i}\right)+\phi_2(a)x^{\sigma^k}:a,f_i\in\Fqn\right\}.
\]
If $N(\phi_1(a))\ne (-1)^{nk}N(\phi_2(a))$ for all $a\in \Fqn^*$, then $\HH_k(\phi_1,\phi_2)$ is an MRD code.
\end{proposition}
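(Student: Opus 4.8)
The plan is to verify the two ingredients of the MRD property separately: that $\HH_k := \HH_k(\phi_1,\phi_2)$ is additive of size exactly $q^{nk}$, and that every nonzero element has rank at least $n-k+1$. The Singleton-like bound of Delsarte will then force equality and yield the MRD conclusion, with minimum distance $d=n-k+1$ in $M_{n\times n}(\Fq)$. First I would record additivity: since $\phi_1,\phi_2$ are additive and the middle coefficients $f_1,\dots,f_{k-1}$ range freely over $\Fqn$, the sum of the elements with parameters $a$ and $a'$ is again an element of $\HH_k$, with parameter $a+a'$. Consequently $d(\HH_k)$ equals the minimum rank among nonzero elements.

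Next I would compute the size. An element of $\HH_k$ is determined by the coefficient tuple $(\phi_1(a),f_1,\dots,f_{k-1},\phi_2(a))$, and the middle coordinates are free and independent of $a$, so $|\HH_k| = q^{n(k-1)}\cdot\#\{(\phi_1(a),\phi_2(a)):a\in\Fqn\}$. The key observation is that the hypothesis forces the additive map $a\mapsto(\phi_1(a),\phi_2(a))$ to be injective: if some $b\neq 0$ satisfied $\phi_1(b)=\phi_2(b)=0$, then $N(\phi_1(b))=N(\phi_2(b))=0$, contradicting $N(\phi_1(b))\neq(-1)^{nk}N(\phi_2(b))$. Hence the image has size $q^n$ and $|\HH_k|=q^{nk}$.

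For the rank bound I would take an arbitrary nonzero $f\in\HH_k$ with parameters $a,f_1,\dots,f_{k-1}$ and split on its $\sigma$-degree. If $\phi_2(a)=0$ (in particular if $a=0$), then $f$ has $\sigma$-degree at most $k-1$, so Lemma \ref{lem:nullity} gives nullity at most $k-1$. If $\phi_2(a)\neq 0$, then $f$ has $\sigma$-degree exactly $k$ with $f_0=\phi_1(a)$ and $f_k=\phi_2(a)$; moreover $a\neq 0$ because $\phi_2(0)=0$, so the hypothesis yields $N(f_0)\neq(-1)^{nk}N(f_k)$, and the contrapositive of the second lemma shows the nullity is strictly less than $k$. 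In both cases the nullity is at most $k-1$, i.e.\ the rank is at least $n-k+1$, so $d(\HH_k)\geq n-k+1$.

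Finally I would assemble the pieces: applying the Singleton-like bound with $m=n$ and $d=d(\HH_k)\geq n-k+1$ gives $q^{nk}=|\HH_k|\leq q^{n(n-d(\HH_k)+1)}\leq q^{nk}$, so all inequalities are equalities and $\HH_k$ meets the bound, hence is MRD. I expect no single step to be genuinely hard, since the two nullity lemmas are supplied; the main subtlety to get right is the injectivity argument underpinning the size count, as that is precisely where the full strength of the hypothesis (rather than only its use in the $\sigma$-degree-$k$ case) is needed, and overlooking it would leave the dimension—and hence the MRD property—unjustified.
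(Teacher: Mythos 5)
Your proof is correct and follows exactly the route the paper sketches: use Lemma \ref{lem:nullity} to bound the nullity when the top coefficient vanishes, the norm condition on $f_0$ and $f_k$ to exclude nullity $k$ otherwise, and the Singleton-like bound to conclude. The injectivity of $a\mapsto(\phi_1(a),\phi_2(a))$, which you rightly flag as the point where the hypothesis is needed for the size count, is left implicit in the paper but is handled correctly here.
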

To date this result has been used to construct the following.
\begin{center}
\begin{tabular}{|c|c|c|c|c|c|c|}
\hline
Name &$\sigma$&$\phi_1(a)$&$\phi_2(a)$&Conditions&Semifield&Reference\\
\hline
TG&$q$&$a$&$\eta a^{q^h}$&$N(\eta)\ne(-1)^{nk}$&GTF&\cite{SheekeyMRD}\\
GTG&$q^s$&$a$&$\eta a^{q^h}$&$N(\eta)\ne(-1)^{nk}$&GTF&\cite{LuTrZh2015}\\
AGTG&$q^s$&$a$&$\eta a^{p^h}$&$N_p(\eta)\ne(-1)^{nk}$&GTF&\cite{Ozbudak1}\\
TZ&$q^s$&$a_0$&$\eta a_1$&$n$ even, $N(\eta)\notin \Box$&Hughes-Kleinfeld&\cite{TrZhHughes}\\
\hline
\end{tabular}
\end{center}
TG=Twisted Gabidulin, GTG=Generalised Twisted Gabidulin, AGTG=Additive Generalised Twisted Gabidulin. TZ = Trombetti-Zhou

Here for $n$ even we write $a=a_0+a_1\theta$ for unique $a_i\in \FF_{q^{n/2}}$, and $\Box$ denotes the set of squares in $\Fq$.

\begin{center}
{\bf Open Problem:}  Classify MRD codes contained in $\G_{k+1}$ and containing a code equivalent to $\G_{k-1}$.
 Classify MRD codes of dimension $nk$ contained in $\G_{k+1}$.
 Find exact conditions for when an element of $\G_{k+1}$ has rank $m-k$.
\end{center}
The latter of these is addressed in \cite{McGSh}.

\subsection{Puncturings of (twisted) Gabidulin codes}

The above outlined constructions all give MRD codes in $M_{n\times n}(\Fq)$, from which we can obtain MRD codes in $M_{n \times m}(\Fq)$ by puncturing and shortening. It is a difficult problem in general to ascertain if and when different puncturings of two codes are equivalent. For example, two puncturings of equivalent codes can be inequivalent, and puncturings of two inequivalent codes can turn out to be inequivalent. Recent work on puncturings of Gabidulin \cite{SchmidtPunc} and twisted Gabidulin \cite{CsSi} has made progress in this direction, though open questions still remain.

%

\subsection{MRD codes from Skew-polynomial Rings}

Further generalisations have been obtained in \cite{SheekeySkew}, by replacing $\sigma$-linearized polynomials with a quotient ring of a skew-polynomial ring.

\begin{proposition}
Let $\Fqn[t;\sigma]$ be a skew-polynomial ring, and $\overline{f}(y)$ an irreducible polynomial of degree $s$ in $\Fq[y]$, and $\eta\in \Fqn$ such that $N(\eta)\ne (-1)^{nks}$. Then the image of the set 
\[
\{f\in \Fqn[t;\sigma]:\deg(f)\leq sk, f_{sk} = \eta f_0\} 
\]
in the quotient ring
\[
\frac{\Fqn[t;\sigma]}{(\overline{f}(t^n))}\simeq M_{n\times n}(\FF_{q^s})
\]
is an MRD code of size $q^{nks}$. This includes examples of MRD codes not equivalent to any previous constructions. 
\end{proposition}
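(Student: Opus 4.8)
The plan is to verify the two defining properties of an MRD code in $M_{n\times n}(\Fqs)$ separately: that the image has size exactly $q^{nks}$, and that its minimum rank-distance equals $n-k+1$. Throughout write $A=\Fqn[t;\sigma]$ and $z=t^n$, so that the centre of $A$ is $\Fq[z]$, and set $F=\overline{f}(t^n)=\overline{f}(z)$, a central element of $t$-degree $ns$. Since $\overline f$ is irreducible of degree $s$, the two-sided ideal $(F)$ is maximal in the centre and $A/(F)\cong M_{n\times n}(\Fqs)$. I would first settle the size. The set $S=\{f:\deg f\le sk,\ f_{sk}=\eta f_0\}$ is an $\Fqn$-subspace of $A$ of $\Fqn$-dimension $sk$ (the coefficients $f_0,\dots,f_{sk-1}$ are free and $f_{sk}$ is determined), so $|S|=q^{nks}$. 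Because the target distance $d=n-k+1\ge 2$ forces $k\le n-1$, every element of $S$ has $t$-degree $\le s(n-1)<ns=\deg F$; as $A/(F)$ has $\{1,t,\dots,t^{ns-1}\}$ as an $\Fqn$-basis, the reduction map is injective on $S$, and the image has size $q^{nks}=(q^s)^{nk}$. Since $S$ is additive, it then suffices to show every nonzero element of $S$ has rank at least $n-k+1$; the Singleton-like bound forces the minimum distance to be exactly $n-k+1$ and the code to be MRD.

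The key structural step computes the rank of the image $\bar g$ of $g\in A$ from divisibility data. The left ideal $Ag+AF$ equals $A\,h$ with $h=\gcrd(g,F)$, so in $R=A/(F)$ the left ideal $R\bar g=(Ag+AF)/AF$ has $\Fqn$-dimension $ns-\deg h$. On the matrix side $R\bar g$ is the left ideal of all matrices whose rows lie in the row space of $\bar g$, of $\Fqs$-dimension $n\cdot\rank(\bar g)$. Comparing $\Fq$-dimensions gives $\rank(\bar g)=n-\tfrac1s\deg\gcrd(g,F)$, so the nullity of $\bar g$ equals $\tfrac1s\deg\gcrd(g,F)$ (in particular $\deg\gcrd(g,F)$ is always a multiple of $s$). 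Thus a nonzero $g\in S$ has nullity $\ge k$ only if $\deg\gcrd(g,F)\ge sk$; since $\gcrd(g,F)$ right-divides $g$ and $\deg g\le sk$, this forces $\deg g=sk$ and $g$ to be, up to a scalar, a right divisor of $F$. Hence the MRD property reduces to the single claim that no nonzero $g\in S$ right-divides $\overline f(t^n)$.

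The crux is the norm obstruction, which generalises the Lemma quoted above for $\sigma$-linearized polynomials. I would prove: if $g$ has $t$-degree $sk$ and right-divides $\overline f(t^n)$, then $N(g_{sk})=(-1)^{nks}N(g_0)$, where $N=N_{\Fqn/\Fq}$. Granting this, a $g\in S$ of degree $sk$ satisfies $g_{sk}=\eta g_0$ with $g_0\ne 0$ (otherwise $g_{sk}=0$ and $\deg g<sk$), whence $N(\eta)N(g_0)=(-1)^{nks}N(g_0)$ gives $N(\eta)=(-1)^{nks}$, contradicting the hypothesis. To establish the relation I would factor $\overline f(t^n)=g_{*}\,g$ and apply the multiplicative (reduced) norm $A\to\Fq[z]$, comparing the constant and leading coefficients of the central polynomial $\overline f(t^n)$ with the norms of the constant and leading coefficients of $g$ and of $g_{*}$; an equivalent route realises $R$ as the endomorphism algebra of an $n$-dimensional $\Fqs$-space on which $t$ acts as a Frobenius-semilinear map, turning $\bar g$ into a linearized polynomial whose complete splitting is governed precisely by $N(g_0)$ and $N(g_{sk})$.

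I expect this norm computation to be the main obstacle. The sign $(-1)^{nks}$ and, above all, the contribution of the constant term $\overline f(0)$ must be tracked carefully: a direct coefficient count suggests that $\overline f(0)$ threatens to enter the relation, and the delicate point is to verify that the chosen realisation of the isomorphism $A/(F)\cong M_{n\times n}(\Fqs)$ (equivalently, the normalisation of $\overline f$) makes this contribution disappear, leaving exactly the clean condition $N(\eta)\ne(-1)^{nks}$. Everything else is bookkeeping built on the dimension identity of the second paragraph. Finally, the concluding assertion that this family contains codes inequivalent to all previous constructions is a separate matter, to be settled with the idealiser and equivalence machinery of Section~\ref{sec:idealisers} rather than with the MRD verification itself.
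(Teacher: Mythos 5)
The paper does not actually prove this proposition; it is quoted from \cite{SheekeySkew}, and the only guidance the survey gives is the $s=1$ template (Lemma \ref{lem:nullity} plus the norm condition $N(f_0)=(-1)^{nk}N(f_k)$ for a degree-$k$ polynomial of full nullity). Your reduction follows exactly that intended route, and the parts you carry out are correct: the dimension count $|S|=q^{nks}$, injectivity of reduction modulo $F=\overline{f}(t^n)$ on polynomials of degree less than $ns$, and in particular the identity $\rank(\bar g)=n-\tfrac1s\deg\gcrd(g,F)$ obtained by comparing $\Fq$-dimensions of the left ideal $R\bar g=(Ag+AF)/AF$ with those of left ideals of $M_{n\times n}(\Fqs)$. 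This correctly reduces MRD-ness to the claim that no nonzero element of the set is a degree-$sk$ right divisor of $F$. Deferring the inequivalence claim is also reasonable, since the survey does not prove that either.

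The genuine gap is the norm lemma, which you assert rather than prove, and which is false in the form you state it: the constant term $\overline{f}(0)$ does enter and does not disappear. Take $n=2$, $s=k=1$, $\overline{f}(y)=y-c$. Then $t-a$ right-divides $t^2-c$ if and only if $N(a)=a^{q+1}=c$, and such $a$ exist for every $c\in\Fq^{*}$; for these divisors $N(g_0)/N(g_1)=N(-a)=c$, whereas your claimed relation forces this ratio to be $(-1)^{nks}=1$. The correct statement, obtainable by your suggested route of applying the reduced norm $\mathrm{Nrd}:\Fqn[t;\sigma]\to\Fq[t^n]$ and noting that a degree-$sk$ right divisor of $F$ has reduced norm a constant multiple of $\overline{f}(z)^{k}$, is $N(g_0)=(-1)^{sk(n+1)}\,\overline{f}(0)^{k}\,N(g_{sk})$ (consistent with Gow--Quinlan when $\overline{f}(y)=y-1$). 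Consequently the excluded value of $N(\eta)$ is $(-1)^{sk(n+1)}\overline{f}(0)^{-k}$, which equals $(-1)^{nks}$ only under a normalisation such as $\overline{f}(0)=(-1)^{s}$; no ``choice of realisation of the isomorphism'' can absorb this, since right-divisibility of $F$ by $g$ is intrinsic to the ring. In the example above your set $\{f_0(1+\eta t)\}$ fails to be MRD precisely when $N(\eta)=c^{-1}$, not when $N(\eta)=1$. Since this norm computation is the entire content of the generalisation from $t^{n}-1$ to $\overline{f}(t^{n})$, leaving it unproved --- and mis-stated in a way that a one-line example refutes --- is a substantive gap rather than bookkeeping.
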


Taking $s=1$ returns us to the case of $\sigma$-linearized polynomials, so this construction contains the Twisted Gabidulin family. Taking $k=1$ and $\eta=0$ returns Petit's semifields. In general other values of $s,k,\eta$ give MRD codes and semifields not equivalent to any previous construction. We note that these codes are never $\FF_{q^{ns}}$-linear if $s>1$.

\subsection{Constructions for $d=n-1$, and Scattered Linear Sets}

Further recent constructions for the case of $\Fqn$-linear codes with $m=n=d+1$ have been obtained. This case has attracted particular attention due to the correspondence with linear sets on a projective line.


\begin{definition}
A linearized polynomial is called {\it scattered} if 
\[
\#\left\{\frac{f(x)}{x}:x\in \Fqn^*\right\} = \frac{q^n-1}{q-1}.
\]
\end{definition}
This definition is due to the fact that the $\Fq$-subspace of $\Fqn^2$ defined by
\[
U_f = \{(x,f(x)):x\in \Fqn\}
\]
is {\it scattered} with respect to the desarguesian spread; that is, it meets any space of the form $\{(x,\alpha x):x\in \Fqn\}$ in at most a one-dimensional space over $\Fq$. Scattered subspaces define {\it scattered linear sets}, which are objects in finite geometry which appear in many contexts. We refer to \cite{LavScat} for an overview of this.

In \cite{SheekeyMRD}, a connection between certain MRD codes and scattered linear sets on a projective line was made. This was extended in different ways in \cite{CsMaPoZu} and \cite{SheekeyVdV}. 


\begin{theorem}[\cite{SheekeyMRD}]
Equivalence classes of maximum scattered subspaces with respect to the Desarguesian $n$-spread in $V(2n,q)$ are in one-to-one correspondence with equivalence classes of $\Fqn$-linear MRD codes in $M_{n\times n}(\Fq)$ with minimum distance $n-1$
\end{theorem}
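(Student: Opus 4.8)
The plan is to set up an explicit dictionary between the two objects and show that it respects equivalence in both directions. On one side we have maximum scattered $\Fq$-subspaces $U$ of $V(2n,q)\cong\Fqn^2$ with respect to the Desarguesian spread; on the other, $\Fqn$-linear MRD codes in $M_{n\times n}(\Fq)$ with minimum distance $d=n-1$. The natural correspondence sends a subspace of the form $U_f=\{(x,f(x)):x\in\Fqn\}$ for a $\sigma$-linearized $f$ to the code $\C_f$ generated (as an $\Fqn$-linear code) by the pair $(x,f(x))$, i.e. the $\Fqn$-linear span of $x$ and $f(x)$ viewed as linearized polynomials. First I would verify that every maximum scattered subspace can, up to equivalence, be written in the graph form $U_f$: a scattered subspace meets the spread element $\{(0,y):y\}$ trivially (otherwise it would meet some spread element in dimension $>1$), so it projects isomorphically onto the first coordinate and is therefore the graph of an $\Fq$-linear, hence $\sigma$-linearized, map $f$.

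The second step is the arithmetic heart of the statement: translating the scattered condition into the MRD condition. By Definition, $U_f$ is scattered precisely when $\#\{f(x)/x:x\in\Fqn^*\}=(q^n-1)/(q-1)$, equivalently when $f(x)=\lambda x$ has at most $q-1$ solutions (a one-dimensional $\Fq$-space of solutions) for every $\lambda\in\Fqn$. In linearized-polynomial terms this says that $f(x)-\lambda x$ has nullity at most $1$ for all $\lambda$, i.e. rank at least $n-1$. I would then identify $\C_f$ concretely as the $\Fqn$-span $\{af(x)+bx:a,b\in\Fqn\}$ of evaluation maps, and show its nonzero elements all have rank $\geq n-1$: a general nonzero element is $af(x)+bx$, which for $a\neq0$ is $a(f(x)+(b/a)x)=a(f(x)-\lambda x)$ with $\lambda=-b/a$, and for $a=0$ is the full-rank map $bx$. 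Hence the scattered condition is exactly the statement that $\C_f$ has minimum distance $n-1$, while $\dim_{\Fqn}\C_f=2$ gives the MRD size $q^{n(m-d+1)}=q^{2n}$ for $m=n$, $d=n-1$. This establishes the bijection at the level of objects.

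The remaining, and I expect most delicate, step is matching the two equivalence relations so that the bijection descends to equivalence classes. For the codes, Theorem~\ref{thm:ShVdV} lets me replace equivalence by \emph{semilinear} equivalence, so two $\Fqn$-linear codes $\C_f,\C_g$ are equivalent iff $\C_g=X\C_f^\rho Y$ with $X\in N(\C(\Fqn))$, $Y\in\GL(m,q)$, $\rho\in\Aut(\Fq)$; by the discussion following Theorem~\ref{thm:ShVdV} this amounts to $g=\alpha f^{\sigma^i}\circ h\bmod(x^{\sigma^n}-x)$ for suitable $\alpha\in\Fqn^*$ and linearized $h$. On the geometric side, equivalence of scattered subspaces is equivalence under the stabiliser of the Desarguesian spread inside $\PGammaL(2,q^n)$, whose structure is classically known. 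The main obstacle is to show these two group actions are carried onto each other by the map $U_f\mapsto\C_f$ — that every spread-preserving collineation of $V(2n,q)$ induces, and is induced by, an allowed code equivalence, with the Frobenius and the $\Fqn^\ast$-scaling matching up correctly. I would handle this by writing a general spread-stabilising semilinear map in the coordinates $(x,y)$ as $(x,y)\mapsto(\,\gamma x^{\rho}+\delta y^{\rho},\,\epsilon x^{\rho}+\zeta y^{\rho}\,)$ with the matrix $\bigl(\begin{smallmatrix}\gamma&\delta\\\epsilon&\zeta\end{smallmatrix}\bigr)\in\GL(2,q^n)$ and $\rho$ a field automorphism, transporting the graph $U_f$ to another graph $U_g$, and reading off $g$ as a fractional-linear-type transform of $f$; comparing this with the semilinear action on codes yields the required compatibility. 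Care is needed with the two spread elements that may be swapped (the roles of domain and range), which corresponds on the code side to an adjoint/transpose-type operation, so I would check that the scattered property and the MRD property are each preserved under this swap to close the argument in both directions.
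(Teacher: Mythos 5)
The survey does not actually prove this theorem --- it is quoted from \cite{SheekeyMRD} --- but your outline follows essentially the same route as that source: reduce maximum scattered subspaces to graphs $U_f$, translate scatteredness into the condition that every $f(x)-\lambda x$ has nullity at most $1$, identify this with the minimum distance of $\C_f=\langle x,f(x)\rangle_{\Fqn}$, and then match the action of the stabiliser of the Desarguesian spread with semilinear code equivalence via Theorem~\ref{thm:ShVdV}. The arithmetic heart (your second step) is correct and complete, and your plan for the equivalence-matching, including writing a spread-stabilising semilinear map as a $2\times 2$ matrix over $\Fqn$ twisted by a field automorphism, is the right one.

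There are, however, two concrete gaps. First, your justification that a maximum scattered subspace is a graph is wrong as stated: a scattered subspace may perfectly well meet the spread element $\{(0,y):y\in\Fqn\}$ in a one-dimensional $\Fq$-subspace --- scatteredness only forbids intersections of $\Fq$-dimension at least $2$, so no contradiction arises. The correct argument is a count: $U$ has $(q^n-1)/(q-1)$ projective points, each lying on a unique spread element, and $(q^n-1)/(q-1)<q^n+1$, so some spread element is disjoint from $U$; since the stabiliser of the spread is transitive on spread elements, you may move that element to $\{(0,y)\}$, and only \emph{then} is $U$, up to equivalence, a graph $U_f$. Second, you establish only one direction of the bijection on objects: you never show that every $\Fqn$-linear MRD code in $M_{n\times n}(\Fq)$ with minimum distance $n-1$ is equivalent to some $\C_f$. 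For that you need the code to contain an element $g$ of rank $n$ (then $\C\circ g^{-1}$ contains $x$ and you may take $f$ spanning an $\Fqn$-complement); the existence of full-rank elements follows from Delsarte's rank-distribution theorem for MRD codes, but it must be invoked. Finally, a small correction to your last paragraph: the coordinate swap $(x,y)\mapsto(y,x)$ sends $U_f$ to $U_{f^{-1}}$ and acts on codes as right-composition by $f^{-1}$, i.e.\ an ordinary equivalence --- it is not an adjoint or transpose operation. The adjoint $\hat f$ only enters when comparing linear sets rather than subspaces, which is a different (and subtler) statement than the one at hand.
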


\begin{theorem}[\cite{CsMaPoZu}]
$\Fq$-linear MRD codes in $M_{\frac{sn}{2},n}(\Fq)$ with minimum distance $n-1$ can be constructed from scattered subspaces of $\Fq$-dimension $\frac{sn}{2}$ in $V(s,q^n)$.
\end{theorem}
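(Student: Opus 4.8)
The plan is to exhibit the code explicitly and then verify its parameters directly; the scattered hypothesis will enter at exactly one place, namely the bound on the kernel of each codeword. Write $V = V(s,q^n) = \Fqn^s$, identify $\Fq^n$ with $\Fqn$ as $\Fq$-spaces, and let $W\le V$ be the given scattered $\Fq$-subspace with $\dim_{\Fq}W = \tfrac{sn}{2}$. Since $\dim_{\Fq}(V/W) = sn - \tfrac{sn}{2} = \tfrac{sn}{2}$, every $\Fq$-linear map $\Fqn\to V/W$ is represented, after fixing bases, by a matrix in $M_{\frac{sn}{2}\times n}(\Fq)$ (codomain dimension $\tfrac{sn}{2}$ rows, domain dimension $n$ columns). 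For each $v\in V$ I would define $\rho_v\colon \Fqn\to V/W$ by $\rho_v(\alpha) = \alpha v + W$, i.e. the composite of scalar multiplication by $v$ with the quotient projection, and take $\C = \{\rho_v : v\in V\}$. This generalises Sheekey's $s=2$ correspondence (stated just above), where $\Fqn$-hyperplanes and spread elements coincide.

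First I would check that $v\mapsto\rho_v$ is $\Fq$-linear: additivity is immediate, and $\rho_{\lambda v} = \lambda\rho_v$ for $\lambda\in\Fq$ because $V/W$ is an $\Fq$-space and $\lambda\in\Fq$, so $\C$ is an additive, $\Fq$-linear rank-metric code. To compute $|\C|$ I would show this parametrisation is injective: its kernel is $\{v : \alpha v\in W \text{ for all }\alpha\in\Fqn\} = \{v : \pt{v}_{\Fqn}\subseteq W\}$, where $\pt{v}_{\Fqn}$ denotes the spread element through $v$. For $v\ne 0$ this spread element has $\Fq$-dimension $n$, whereas $\dim_{\Fq}(W\cap \pt{v}_{\Fqn})\le 1$ by the scattered property, so no nonzero $v$ lies in the kernel. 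Hence $|\C| = |V| = q^{sn}$.

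The crucial step is the minimum distance, and it is here that scatteredness does the work. For $v\ne 0$ the $\Fq$-isomorphism $\alpha\mapsto\alpha v$ from $\Fqn$ onto $\pt{v}_{\Fqn}$ carries $\ker\rho_v$ onto $W\cap\pt{v}_{\Fqn}$, so $\dim_{\Fq}\ker\rho_v = \dim_{\Fq}(W\cap\pt{v}_{\Fqn})\le 1$, giving $\rank(\rho_v)\ge n-1$ and therefore $d(\C)\ge n-1$. Comparing with the Singleton-like bound (Delsarte's theorem above) applied with $\tfrac{sn}{2}$ rows, $n$ columns and $d = n-1$, the bound reads $q^{\frac{sn}{2}(n-(n-1)+1)} = q^{sn} = |\C|$, so $\C$ meets it and is MRD. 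Taking $v=w$ for any nonzero $w\in W$ forces $w\in W\cap\pt{w}_{\Fqn}$, hence $\dim_{\Fq}\ker\rho_w = 1$ and $\rank(\rho_w)=n-1$, which confirms $d(\C)=n-1$ exactly (the code cannot be MRD for $d=n$, as that bound is only $q^{sn/2}$).

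The one genuinely delicate point is the parameter bookkeeping rather than any hard estimate: the construction yields a code of minimum rank at least $n-1$ for \emph{any} scattered $W$, but its size meets the Singleton-like bound only when $\dim_{\Fq}W$ equals the maximum scattered dimension $\tfrac{sn}{2}$, which is precisely the hypothesis (and tacitly requires $sn$ even). I therefore expect the main obstacle to be verifying that $V/W$ has exactly $\tfrac{sn}{2}$ dimensions and that the bound is met with equality, together with keeping the roles of rows and columns straight; the scattered condition itself is invoked cleanly and only through the appearance of the spread elements $\pt{v}_{\Fqn}$ as the kernels of the $\rho_v$.
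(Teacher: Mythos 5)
Your construction is exactly the one in the cited source \cite{CsMaPoZu} (the survey itself states this theorem without proof): the maps $\rho_v:\alpha\mapsto \alpha v+W$ into the quotient $V/W$, with scatteredness bounding each nullity by $\dim_{\Fq}(W\cap\pt{v}_{\Fqn})\le 1$ and the maximality $\dim_{\Fq}W=\tfrac{sn}{2}$ making the size $q^{sn}$ meet the Singleton-like bound for $d=n-1$. All the verifications (injectivity of $v\mapsto\rho_v$, $\Fq$-linearity, exactness of the minimum distance via $v\in W\setminus\{0\}$) are correct, so the proposal is a complete and faithful proof.
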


A linearized polynomial $f$ over $\Fqn$ is said to be {\it exceptional scattered of index $t$} if $f(x^{q^{sn-t}})$ is scattered over $\FF_{q^{sn}}$ for infinitely many $s$.

\begin{theorem}[\cite{BaZh}]
The only linearized polynomial that is exceptional scattered with index $0$ is $x^q$. The only linearized polynomials that are exceptional scattered with index $1$ are those of the form $bx+ x^{q^2}$ with $N(b)\ne 1$.
\end{theorem}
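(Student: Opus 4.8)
The plan is to convert scatteredness into a count of rational points on a plane curve and then control that count with the Hasse--Weil bound. Writing $f(x)=\sum_i a_i x^{q^i}$, the polynomial $f(x^{q^{sn-t}})$ is scattered over $\FF_{q^{sn}}$ exactly when $\dim_{\Fq}\ker\bigl(f(x)-mx^{q^t}\bigr)\le 1$ for every $m$ (substitute $x=y^{q^t}$ and use $y^{q^{sn}}=y$), i.e.\ when the only pairs $(x,y)$ with $xy\neq 0$ and $f(x)y^{q^t}=f(y)x^{q^t}$ satisfy $y/x\in\Fq$. I therefore attach to $f$ the affine curve
\[
\mathcal C_{f,t}:\quad f(X)Y^{q^t}-f(Y)X^{q^t}=0.
\]
Since $f$ is $\Fq$-linear, every line $Y=\lambda X$ with $\lambda\in\Fq$ is a component; I call these the \emph{trivial} components, and they account precisely for the permitted solutions.

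The crucial reduction is that $f$ is exceptional scattered of index $t$ if and only if $\mathcal C_{f,t}$ has no absolutely irreducible component, other than the trivial lines, that is defined over $\FF_{q^n}$ itself. The $\FF_{q^n}$-rationality is the delicate point. A nontrivial component defined over $\FF_{q^n}$ is rational over every $\FF_{q^{sn}}$, so by Hasse--Weil it carries $\sim q^{sn}$ points there, forcing a solution with $y/x\notin\Fq$ and destroying scatteredness for all large $s$. By contrast, a component defined only over a proper extension $\FF_{q^m}$ with $m\nmid n$ is rational over $\FF_{q^{sn}}$ only for those $s$ with $m\mid sn$; restricting to the infinitely many $s$ that avoid this (finitely many) condition preserves scatteredness. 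Hence only $\FF_{q^n}$-rational nontrivial components obstruct, and the classification reduces to finding all $f$ for which $\mathcal C_{f,t}$ carries none.

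For $t=0$ and $f$ of $q$-degree one, $f=a_0x+a_1x^q$ with $a_1\neq0$, one computes $f(X)Y-f(Y)X=a_1(X^qY-XY^q)=a_1XY\prod_{\lambda\in\Fq^{*}}(X-\lambda Y)$, a product of trivial components only; so every such $f$ is exceptional scattered. The spread-preserving equivalence $(x,y)\mapsto(x,y-a_0x)$ deletes the $a_0x$ term and a scalar normalises $a_1$, so each index-$0$ polynomial is equivalent to $x^q$, giving the first assertion.

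The substance is $t=1$, $f=a_0x+a_1x^q+a_2x^{q^2}$ with $a_2\neq0$. Here $X^{q-1}-Y^{q-1}$ factors out of $\mathcal C_{f,1}$, leaving a residual curve of degree $q^2+1$, and I would analyse its absolutely irreducible $\FF_{q^n}$-rational components as a function of $a_0,a_1,a_2$. The aim is to show that such a component is forced to exist unless $a_1$ can be removed by an equivalence — reducing $f$ to $bx+x^{q^2}$ — and that in this surviving family scatteredness over the base field is governed exactly by $N(b)\neq1$, a condition stable along the extensions $\FF_{q^{sn}}$ and hence yielding the exceptional property. The main obstacle is precisely this absolute-irreducibility analysis of the residual curve: homogenising, locating its singularities and its points on the line at infinity, and applying an Aubry--Perret-type criterion to decide when an $\FF_{q^n}$-rational absolutely irreducible component appears. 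I expect the hardest part to be the bookkeeping at infinity, together with the need to distinguish a genuinely obstructing $\FF_{q^n}$-component from Frobenius-conjugate pieces that live only over proper extensions and therefore do not obstruct.
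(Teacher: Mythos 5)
First, a point of orientation: the survey you are working from does not prove this theorem; it is quoted from Bartoli--Zhou \cite{BaZh}, and your overall strategy --- attaching to $f$ the curve $\mathcal C_{f,t}: f(X)Y^{q^t}-f(Y)X^{q^t}=0$, discarding the trivial linear components $Y=\lambda X$ ($\lambda\in\Fq$), and using absolute irreducibility together with Hasse--Weil to show that a nontrivial absolutely irreducible component defined over $\Fqn$ destroys exceptionality --- is precisely the strategy of that paper. So the framework is the right one. However, as written your proposal has genuine gaps. The most serious is that you never rule out higher degrees. The theorem is a classification: for index $0$ you must show that no $f$ of $q$-degree at least $2$ is exceptional scattered, and for index $1$ that no $f$ of $q$-degree at least $3$ is; instead you silently restrict to $q$-degree $1$ (resp.\ $2$) and classify within that stratum. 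Excluding the higher-degree polynomials --- i.e.\ proving that for $q$-degree $k\ge t+2$ the curve $\mathcal C_{f,t}$ always acquires a nontrivial absolutely irreducible $\Fqn$-rational component --- is the bulk of the work in \cite{BaZh}, and it is entirely absent here. The index-$1$ analysis is moreover only announced (``I would analyse'', ``the aim is to show''), not carried out, so even the degree-$2$ stratum is not actually settled.

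Two further points need repair. Your ``if and only if'' reduction is only valid in one direction: the absence of a nontrivial absolutely irreducible $\Fqn$-rational component does \emph{not} imply exceptionality, because distinct Frobenius-conjugate components intersect in finitely many points, and such an intersection point can be $\Fqn$-rational and off the trivial lines; a single such point violates scatteredness over \emph{every} $\FF_{q^{sn}}$. The correct logic uses only the implication ``component $\Rightarrow$ not exceptional'' to prune candidates, and then verifies exceptionality of the survivors directly --- for $bx+x^{q^2}$ via the norm computation $N_{q^{sn}/q}(b)=N_{q^n/q}(b)^s$, which is $\ne 1$ for infinitely many $s$ exactly when $N_{q^n/q}(b)\ne 1$ (note this is not literally ``stable along the extensions''; it can equal $1$ for some $s$, which is why the definition only demands infinitely many $s$). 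Finally, a small but clarifying observation for index $1$: the middle coefficient contributes $a_1(X^qY^q-Y^qX^q)=0$ to $\mathcal C_{f,1}$, so it vanishes from the curve identically and is removed by the obvious equivalence $(u,v)\mapsto(u,v-a_1u)$ on $U_f=\{(x^q,f(x))\}$; the genuine content of the index-$1$ case is therefore the analysis of $a_0x+a_2x^{q^2}$ together with the exclusion of $q$-degree $\ge 3$, not the elimination of $a_1$.
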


Here we list the known examples of scattered polynomials, or equivalently the known $\Fqn$-linear MRD codes in $M_{n\times n}(\Fq)$ with minimum distance $n-1$. In each of the following, the code is represented as a two-dimensional $\Fqn$-subspace of linearized polynomials $\C = \pt{x,f(x)}$.

\begin{center}
\begin{tabular}{|c|c|c|c|c|}
\hline
Reference &$n$&$f(x)$&Conditions&Family\\
\hline
\cite{BlLa00}&any&$x^{\sigma}$&none&DG\\
\cite{LuPo1999}&any&$x^{\sigma}+\eta x^{\sigma^{-1}}$&$N(\eta)\ne 1$&GTG\\
\cite{CsMaPoZa}&6,8&$\eta x^{q^s}+x^{q^{s+n/2}}$ &$(n,s)=1$, $\eta^{q^{n/2}+1}\notin\{0,1\} $&-\\
\cite{CsMaZu}&6&$ x^q + x^{q^3}+bx^{q^5}$&$ b^2+b = 1, q \equiv 0, \pm 1 \mod 5$&-\\
\hline
\end{tabular}
\end{center}
DG=Delsarte-Gabidulin, GTG=Generalised Twisted Gabidulin

It was shown in \cite{CsZa} that for $n=4$, all $\FF_{q^4}$-linear MRD codes in $(\FF_{q^4})^4$ with minimum distance $3$ are equivalent to a Delsarte-Gabidulin or Twisted Gabidulin code. 

%
\begin{center}
{\bf Open Problem:} Do there exist $\FF_{q^n}$-linear MRD codes in $(\FF_{q^n})^n$ with minimum distance $n-1$ not equivalent to a Delsarte-Gabidulin or Twisted Gabidulin code for any $n$ odd? For any $n>10$ even?
\end{center}


\subsection{Other constructions, $m<n$}

Further constructions for MRD codes have been published in \cite{Puchinger}, \cite{HoMa}, with the latter giving constructions for $\Fqn$-linear codes. However difficulty remains in establishing newness in the case of $m<n$, due to the the difficulty in recognising puncturings and shortenings of Twisted Gabidulin codes.

\section{Classifications of MRD codes}

Classifications of MRD codes are only known in a few cases, most coming from classifications of semifields. We gather the known results here.
%
%
%

%

In this table we state the number of equivalence classes of $\Fq$-linear MRD codes in $M_{n \times m}(\Fq)$ with minimum distance $d$ for all known completed classifications. The first table lists theoretical results, while the second lists computational results. Note that the precise number of equivalence classes of generalised twisted fields can be found in \cite{Purpura}. Note also that for the case $m=n=d$, the number of equivalence classes corresponds to the number of isotopy classes, with the number of Knuth orbits in brackets. Results marked as new were calculated by the author using MAGMA.
\begin{center}
\begin{tabular}{|c|c|c|c|c|}
\hline
$n \times m$ &$q$&$d$&Classes&Reference\\
\hline
$2\times 2$&any&$2$&Field&\cite{Dickson1905}\\
$3\times 3$&any&$3$&Field/GTF&\cite{Menichetti1977}\\
$3\times 3$&any&$2$&dual of Field/GTF&\cite{Menichetti1977}\\
$n\times n$, $n$ prime&large enough&$n$&Field/GTF&\cite{Menichetti1996}\\
\hline
\end{tabular}
\end{center}

\begin{remark}
Note that by duality (Section \ref{sec:duality}), all results on constructions and classifications of semifields not only give immediate results for MRD codes with $m=n=d$, but also results for $m=n$, $d=2$. Thus for $n=2,3$, the problem of classifying $\Fq$-linear codes in $M_{n\times n}(\Fq)$ is completely solved. However if $q = p^e$ for some prime $p$ and integer $e>1$, the classification of $\Fp$-linear codes in $M_{n\times n}(\Fq)$ remains open for all $n$.
\end{remark}

\begin{center}
\begin{tabular}{|c|c|c|c|c|}
\hline
$n \times m$ &$q$&$d$&\#MRD Classes&Reference\\
&&& (Knuth Orbits)&\\
\hline
$3\times 4$&2&$3$&7&\cite{Kurz}\\
$3\times 4$&3&$3$&43&New\\
\hline
$4\times 4$&2&$4$&3(3)&\cite{Knuth1965}\\
$4\times 4$&2&$3$&1&New\\
{\color{blue}$4\times 4$}&{\color{blue}3}&${\color{blue}3}$&{\color{blue}5}&{\color{blue}\cite{ShBin}}\\
$4\times 4$&3&$4$&27 (12)&\cite{Dempwolff}\\
$4\times 4$&4&$4$&(28)&\cite{Rua2011a}\\
$4\times 4$&5&$4$&(42)&\cite{Rua2011a}\\
$4\times 4$&7&$4$&(120)&\cite{Rua2012}\\
\hline
$5\times 5$&2&$5$&6 (3)&\cite{Walker}\\
{\color{blue}$5\times 5$}&{\color{blue}2}&${\color{blue}4}$&{\color{blue}2}&{\color{blue}\cite{ShBin}}\\
$5\times 5$&3&$5$&23 (9)&\cite{Rua2011b}\\
\hline
$6\times 6$&2&$6$&332 (80)&\cite{Rua2009}\\
{\color{blue}$6\times 6$}&{\color{blue}2}&${\color{blue}5}$&{\color{blue}1}&{\color{blue}\cite{ShBin}}\\
\hline
\end{tabular}
\end{center}


\begin{remark}
\label{rem:table}
Representatives for equivalence classes of linear and nonlinear MRD codes for small parameters can be found at \cite{DempData}. Further data on subspace codes arising from MRD codes can be found at \cite{SubspaceCodes}.
\end{remark}

Here we state the number of equivalence classes of MRD codes (not necessarily additive) in $M_{n \times m}(\Fq)$ with minimum distance $m$ for all known completed classifications. We refer to \cite[Chapter 96]{Handbook} and references therein for all bar the case of $3\times 4$, for which we refer to \cite{Kurz}.
\begin{center}
\begin{tabular}{|c|c|c|c|c|}
\hline
$n \times m$ &$q$&Number\\
\hline
$2\times 2$&2&1\\
$2\times 2$&3&2\\
$2\times 2$&5&21\\
$2\times 2$&7&1347\\
\hline
$3\times 3$&2&1\\
$3\times 3$&3&7\\
\hline
$3\times 4$&3&37\\
\hline
$4\times 4$&2&8\\
\hline
%
\end{tabular}
\end{center}


Some classification results are known for $\Fq$-linear MRD codes in $M_{2\times 2}(\FF_{q^s})$ with minimum distance $2$.

\begin{theorem}[\cite{CAPOTR2004}]
Every $\Fq$-linear MRD code in $M_{2\times 2}(\FF_{q^2})$ with minimum distance $2$ is equivalent to one of four known constructions.
\end{theorem}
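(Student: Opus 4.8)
The plan is to route the problem through the semifield correspondence and then apply the classical structure theory of semifields that are two-dimensional over a nucleus.

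First I would translate the statement. Since here $m=n=d=2$ over the field $\Fqt$, such a code $\C$ consists of $q^4$ matrices of which every nonzero one is invertible over $\Fqt$. Regarding each matrix as an $\Fqt$-linear endomorphism of $\Fqt^2\cong\Fq^4$ turns $\C$ into an $\Fq$-linear MRD code in $M_{4\times4}(\Fq)$ of minimum distance $4$, so by the semifield correspondence it is the spread set of a presemifield $\SSS$ of order $q^4$. The scalar maps $\lambda\cdot\id$ with $\lambda\in\Fqt$ commute with every element of $\C$, so $\Fqt$ embeds in the left nucleus $N_\ell(\SSS)$; as $\SSS$ is two-dimensional over $N_\ell$, we get $N_\ell=\Fqt$ unless $\SSS\cong\FF_{q^4}$. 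By the correspondence theorem together with Theorem \ref{thm:ShVdV}, code equivalence matches isotopy of the associated presemifields, so the task reduces to classifying, up to isotopy, presemifields of order $q^4$ that are two-dimensional over the left nucleus $\Fqt$.

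Next I would invoke Knuth's canonical form for a semifield two-dimensional over a (weak) nucleus \cite{Knuth1965}. Writing elements as $a+b\lambda$ with $a,b\in\Fqt$, one reduces the multiplication to a standard shape governed by an automorphism $\sigma\in\Gal(\Fqt:\Fq)$ (hence $\sigma=\id$ or $x\mapsto x^q$), a twisted quadratic relation for $\lambda$, and constants in $\Fqt$. The requirement that $\SSS$ be a division algebra, i.e.\ that no nonzero element of $\C$ is singular, translates into a non-representability/norm condition on these constants, of exactly the type ($N(\eta)\ne\pm1$, or $N(\eta)\notin\Box$) that appears in the construction tables of Section \ref{sec:constructions}.

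Finally I would carry out the isotopy reduction, using the normaliser description of equivalence from Section \ref{sec:idealisers} to normalise the constants as far as possible. I expect to split into the cases $\sigma=\id$ and $\sigma\ne\id$, each further into even and odd characteristic, and to match every surviving parameter orbit with one of the known families --- the field $\FF_{q^4}$ (Delsarte--Gabidulin), the generalised twisted fields of Albert \cite{Albert1961}, and the Hughes--Kleinfeld and Knuth families --- giving the four classes. The hard part will be this reduction: keeping the case analysis exhaustive but non-redundant, recognising exactly when two parameter sets yield isotopic semifields, and controlling the characteristic-two exceptions, where the quadratic relation degenerates and distinct forms can coincide or bifurcate. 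A final, comparatively routine step would verify that the four representatives are pairwise inequivalent by comparing isotopy invariants such as the nuclei and centre, confirming that the count is exactly four.
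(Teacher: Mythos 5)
The paper itself gives no proof of this statement---it is quoted as a classification theorem from the cited reference---so your attempt has to be measured against the method of that source. Your opening reduction is correct and is indeed how the problem is set up there: an $\Fq$-linear MRD code in $M_{2\times 2}(\FF_{q^2})$ with $d=2$ has $q^4$ elements, every nonzero one invertible, hence is the spread set of a presemifield of order $q^4$ whose left nucleus contains $\FF_{q^2}$ and whose centre contains $\Fq$; and code equivalence matches isotopy once one checks that the relevant elements of $\GL(4,q)$ can be taken to normalise the copy of $\FF_{q^2}$ in the centraliser, which the idealiser/nucleus machinery of Section \ref{sec:idealisers} provides.

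The gap is in the classification step. Knuth's canonical form for quadratic extensions of a weak nucleus does not give the reduction you claim: his normal forms, which lead to the Hughes--Kleinfeld and Knuth types, require $\FF_{q^2}$ to sit inside \emph{two} of the nuclei, equivalently that the multiplication is already $\FF_{q^2}$-semilinear in both arguments. Here you only know that the left nucleus contains $\FF_{q^2}$: the code is an $\Fq$-subspace, not an $\FF_{q^2}$-subspace, of $M_{2\times 2}(\FF_{q^2})$, so its matrix entries are arbitrary $\Fq$-linear maps from $\FF_{q^4}$ to $\FF_{q^2}$ and cannot be forced into the shape ``one automorphism $\sigma$ plus constants in $\FF_{q^2}$'' by any elementary normalisation. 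Closing exactly this gap is the substance of the cited classification, and it is done not by normalising a Knuth-type multiplication but geometrically: the code is viewed as an $\Fq$-linear set of rank $4$ in $\PG(3,q^2)$ disjoint from the hyperbolic quadric of rank-one matrices (the Segre variety of singular $2\times 2$ matrices), and the case analysis is organised by the geometric type of this linear set relative to the quadric and its reguli. Two smaller points: your identification of the four families is asserted rather than derived and need not match the cited list; and the theorem claims membership in one of four known \emph{families}, not that there are exactly four equivalence classes, so the concluding ``pairwise inequivalent, hence the count is four'' step is both unnecessary and false as stated, since each family contains many inequivalent codes for fixed $q$.
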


$\Fq$-linear MRD code in $M_{2\times 2}(\FF_{q^3})$ with minimum distance $2$ have been partially classified; see for example \cite{MaPoTr2007}.


\section{Ubiquity of MRD codes}

A fundamental question regarding MRD codes is their ubiquity; that is, if we choose a rank-metric code at random, what is the probability that it will be MRD? This question has drawn some attention in recent years, in particular asymptotic results; we collect the known results here. 

The nature of the results and open questions depends strongly on whether we are considering $\Fq$-linear or $\Fqn$-linear codes. To take a rather trivial example, the probability that a random one-dimensional $\Fqn$-linear code in $M_{n\times n}(\Fq)$ is MRD is precisely $|\GL(n,q)|/(q^{n^2}-1)$, or approximately $1-1/q$. However, in terms of equivalence classes, there are precisely $n$ classes, one for each rank, and so the  probability that a randomly chosen equivalence class of one-dimensional $\Fqn$-linear codes in $M_{n\times n}(\Fq)$ is MRD is $1/n$.

For $n$-dimensional $\Fq$-linear codes in $M_{n\times n}(\Fq)$, the probability that a randomly chosen example is MRD (i.e. a semifield spread set) is not known. Kantor conjectured the following (paraphrased from \cite{Kantor2006}).

\begin{Kantor}

(1) The number of equivalence classes of additive MRD codes in $M_{n\times n}(\Fp)$ with minimum distance $n$ is not bounded above by a polynomial in $p^n$.

(2) There is an exponential number of equivalence classes of additive MRD codes in $M_{n\times n}(\Fp)$ with minimum distance $n$.
\end{Kantor}

For other minimum distances, the following is known. We see a marked distinction between the cases of $\Fqm$-linear codes and $\Fq$-linear codes. Note that the statements are in terms of codes rather than equivalence classes.

\begin{theorem}{\cite{Genericity}}
The proportion of $k$-dimensional $\Fqn$-linear codes in $M_{n \times m}(\Fq)$ which are MRD approaches $1$ as $n \rightarrow \infty$.
\end{theorem}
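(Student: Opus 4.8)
The plan is to realise an $\Fqn$-linear code as a $k$-dimensional $\Fqn$-subspace $\C$ of $(\Fqn)^m$ and to run a first-moment (union-bound) argument over the ``bad'' codewords. Since $|\C|=q^{nk}$, the Singleton-like bound shows that $\C$ is MRD precisely when $d(\C)=m-k+1$, i.e. precisely when $\C$ contains no nonzero codeword of rank weight at most $m-k$. (If $k=m$ the bound forces $d=1$ and every code is trivially MRD, so assume $1\le k\le m-1$.) Identifying $v=(v_1,\dots,v_m)\in(\Fqn)^m$ with the $n\times m$ matrix over $\Fq$ whose columns are $v_1,\dots,v_m$, the rank weight $\omega(v)$ is exactly the $\Fq$-rank of this matrix, so the number $M_r$ of vectors of rank weight exactly $r$ is the classical count ${n\brack r}\prod_{i=0}^{r-1}(q^m-q^i)$ of rank-$r$ matrices, which has leading order $q^{r(n+m-r)}$.

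The crucial observation is that rank weight is constant along $\Fqn$-lines: for $\lambda\in\Fqn^*$, multiplication by $\lambda$ is an $\Fq$-linear bijection of $\Fqn$, so $\omega(\lambda v)=\omega(v)$. Hence the low-rank codewords of $\C$ come in entire $\Fqn$-lines, and the event ``$\C$ is not MRD'' is exactly the event that $\C$ contains one of the \emph{bad lines}, that is, a $1$-dimensional $\Fqn$-subspace spanned by a vector of rank weight $\le m-k$. Let $Y$ count the bad lines contained in a uniformly random $k$-dimensional $\C$. Then $\Pr[\C\text{ is not MRD}]=\Pr[Y\ge 1]\le\EE[Y]$. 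To evaluate $\EE[Y]$ I would use linearity of expectation: a fixed line $\ell$ lies in $\C$ with probability $(q^{nk}-1)/(q^{nm}-1)$, by the usual double count of incident pairs (line, $k$-space), and the number of bad lines is $\tfrac{1}{q^n-1}\sum_{r=1}^{m-k}M_r$. Since the exponents $r(n+m-r)$ increase in $r$ throughout the range $r\le m-k<(n+m)/2$, this sum is, up to a bounded constant, its top term $M_{m-k}\asymp q^{(m-k)(n+k)}$. Multiplying gives $\EE[Y]\asymp q^{(m-k)(n+k)}\cdot q^{-n}\cdot q^{n(k-m)}=q^{k(m-k)-n}$, which tends to $0$ as $n\to\infty$ with $k,m,q$ fixed, proving the claim.

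The main subtlety, and the step that must be handled correctly, is the passage from codewords to lines. A naive union bound over individual low-rank \emph{vectors} gives $\EE[\#\{\text{bad vectors in }\C\}]\asymp q^{k(m-k)}$, which is bounded away from $0$ and proves nothing; it is only because the $q^n-1$ vectors of each bad line enter or leave $\C$ together that the relevant first moment acquires the saving factor $q^{-n}$ and vanishes. Once this is recognised the remainder is routine: one needs only the rank-enumerator of matrices over $\Fq$, the incidence count governing Gaussian binomials, and the fact that the sum over $r$ is dominated by $r=m-k$. No second-moment estimate is required, since the first moment of the line-count already tends to zero; note also that the statement is about proportions of codes rather than equivalence classes, which is what makes this uniform counting argument directly applicable.
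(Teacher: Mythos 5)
The survey states this result with a citation to \cite{Genericity} and gives no proof of its own, so there is nothing in the paper to compare against line by line; I have therefore checked your argument on its own terms, and it is correct. Your route is a clean first-moment argument: $\C$ fails to be MRD iff it contains a nonzero word of rank weight at most $m-k$, such words partition into $\Fqn$-lines because $\omega(\lambda v)=\omega(v)$ for $\lambda\in\Fqn^*$, a fixed line lies in a uniform $k$-space with probability $(q^{nk}-1)/(q^{nm}-1)$, and the sum $\sum_{r\le m-k}M_r$ is dominated by its top term since the exponents $r(n+m-r)$ are strictly increasing on that range (as $m-k<(n+m)/2$ follows from $m\le n$). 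The resulting bound $\Pr[\text{not MRD}]=O(q^{k(m-k)-n})$ indeed tends to $0$ for fixed $q,m,k$, which is the intended reading of "$n\to\infty$" given the standing convention $m\le n$; it is worth saying explicitly that $m$ and $k$ are held fixed, since for $m$ growing with $n$ (e.g.\ $m=n$, $k\ge 2$) your first moment diverges and the argument gives nothing. This differs from the approach in \cite{Genericity}, which works with the $k\times m$ generator matrix $G$ over $\Fqn$ and the criterion that $\C$ is MRD iff $\det(GA)\ne 0$ for every full-rank $A\in M_{m\times k}(\Fq)$, then bounds the number of bad $G$; your version is more elementary (only the rank enumerator of $M_{n\times m}(\Fq)$ and a Gaussian-binomial incidence count), and your observation that the saving factor $q^{-n}$ comes precisely from grouping bad words into $\Fqn$-lines --- without which the union bound stalls at $q^{k(m-k)}$ --- is exactly the point where $\Fqn$-linearity is used and where the analogous statement for $\Fq$-linear codes breaks down, consistent with the contrasting theorem from \cite{FerrersHeide},\cite{ByrneRavagnaniGen} quoted immediately after this one.
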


\begin{theorem}{\cite{FerrersHeide},\cite{ByrneRavagnaniGen}}
The proportion of $mk$-dimensional $\Fq$-linear codes in $M_{n \times m}(\Fq)$ which are MRD is asymptotically at most $1/2$ if $q$ is large enough with respect to $m,n,k$, or if $m$ is large enough with respect to $q,n,k$.
\end{theorem}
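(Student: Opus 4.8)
The plan is to recast the statement as a question about random subspaces avoiding a fixed algebraic set. Fix the minimum distance $d$ for which $mk$ is the extremal dimension permitted by the Singleton-like bound, so that an $\Fq$-linear code of dimension $mk$ is MRD if and only if its minimum distance is exactly $d$, i.e.\ if and only if it contains no nonzero matrix of rank at most $d-1$. Writing $\mathcal{R} = \{X \in M_{n\times m}(\Fq): 1 \le \rank(X) \le d-1\}$ for the punctured determinantal locus, the quantity to control is
\[
P_q = \frac{\#\{\C : \dim_{\Fq}\C = mk,\; \C \cap \mathcal{R} = \emptyset\}}{\binom{nm}{mk}_q},
\]
the proportion of $mk$-dimensional subspaces disjoint from $\mathcal{R}$. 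The idea is to estimate the numerator by a sieve over $\mathcal{R}$ and compare its leading term with the Gaussian binomial in the denominator.

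First I would run the first-moment computation. For a fixed nonzero matrix the probability of lying in a uniformly random $mk$-dimensional subspace is $(q^{mk}-1)/(q^{nm}-1)$, and the number of matrices of rank $r$ is asymptotic to $q^{r(m+n-r)}$; hence the expected number of nonzero codewords of rank at most $d-1$ is dominated by the top forbidden rank $r=d-1$ and grows without bound in either asymptotic regime. Since this mean diverges while $P_q$ must stay bounded below, the low-rank codewords cannot behave independently: a code that contains one element of $\mathcal{R}$ typically contains a whole positive-dimensional family, because $\mathcal{R}$ is the cone over a secant variety of the Segre variety and is therefore heavily clustered into subspaces. The real work is to convert this heuristic into a controlled sieve, computing the joint incidences (second and higher moments) and isolating the events that actually govern avoidance, namely a code meeting a \emph{minimal} low-rank configuration, which should occur with bounded mean at the correct scale so that the avoidance probability converges.

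The constant $1/2$ should then emerge from this dominant obstruction. In the boundary situation the low-rank locus carried by a random code is effectively zero-dimensional, the number of minimal obstructions is bounded, and the avoidance probability tends to a constant which the count pins at no more than $1/2$; in the remaining cases the locus is positive-dimensional and one expects $P_q \to 0 \le 1/2$. I would treat the two regimes of the statement, $q\to\infty$ with $m,n,k$ fixed and $m\to\infty$ with $q,n,k$ fixed, uniformly through the same sieve, the two differing only in which parameter drives the leading term; the $\GL(n,q)\times\GL(m,q)$ and $\Aut(\Fq)$ symmetries enter only to normalise and simplify the counts.

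The main obstacle is precisely the passage from the divergent first moment to the true limiting constant. A union bound yields nothing, and a naive second moment overcounts, since its dominant contributions come from pairs of low-rank matrices lying together on the Segre/secant locus; these diagonal clumps must be extracted and estimated exactly rather than treated as independent. Pinning the resulting constant to be at most $1/2$, simultaneously and uniformly across both asymptotic regimes, is the delicate step, and it is where the determinantal geometry of $\mathcal{R}$ does the essential work.
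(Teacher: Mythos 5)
The survey states this theorem without proof, quoting it from \cite{FerrersHeide} and \cite{ByrneRavagnaniGen}, so I am comparing your plan against the arguments in those references. Your setup is correct (uniformly random $mk$-dimensional subspaces avoiding the locus $\mathcal{R}$ of nonzero matrices of rank at most $d-1$), your first-moment computation is correct ($\EE[W]\asymp q^{(d-1)(m-d+1)}\to\infty$ in both regimes), and you have correctly isolated the one structural feature of $\mathcal{R}$ that matters, namely that it is closed under scalar multiplication. The gap is that the step you explicitly defer (``the real work'', ``the delicate step'') is the entire content of the theorem, and the mechanism you sketch for it --- a higher-moment sieve whose limit is governed by a bounded number of independent ``minimal obstructions'' --- is not the mechanism that works and would not produce the constant $1/2$. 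What is actually needed is much less: pass to projective points, let $W'$ count the one-dimensional subspaces of low-rank matrices contained in the code, and check that $\EE[W']\asymp q^{(d-1)(m-d+1)-1}\geq 1-o(1)$ while $\EE[W'^2]\leq \EE[W']+\EE[W']^2$ (distinct projective points are negatively correlated for membership in a random subspace). Cauchy--Schwarz, equivalently the degree-two Bonferroni inequality, then gives
\[
\PP(\text{non-MRD})=\PP(W'\geq 1)\;\geq\;\frac{\EE[W']^2}{\EE[W'^2]}\;\geq\;\frac{\EE[W']}{1+\EE[W']}\;\geq\;\tfrac12-o(1),
\]
uniformly in both asymptotic regimes. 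No third or higher moments, no sieve, and no determinantal geometry beyond counting matrices of fixed rank enter the argument.

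The reason your proposed route would fail rather than merely be longer is that the limiting distribution of $W'$ in the extremal case is not Poisson, so any ``bounded number of independent rare obstructions'' model computes the wrong avoidance probability. Take $m=n=d=2$ and code dimension $2$: a constant-rank-$2$ subspace containing $I$ has the form $\langle I,A\rangle$, and the low-rank projective points of $\langle I,A\rangle$ correspond to the roots in $\Fq$ of the characteristic polynomial of $A$. Hence $W'$ is asymptotically $0$ or $2$, each with probability tending to $1/2$: the obstructions arrive in conjugate pairs. The mean is $1$, but the avoidance probability tends to $1/2$, not to $e^{-1}$; this positive clustering beyond the scalar action is exactly what the second-moment inequality absorbs and what a Poissonised count of minimal configurations misses. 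Since this is also the case in which the bound $1/2$ is attained with equality, there is no slack available to repair an argument built on the independent-obstruction heuristic; you must run the two-moment computation (or an equivalent pair-counting inclusion--exclusion) as the references do.
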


\subsection{Data}
Here we collect data for the number of $\Fq$-linear MRD codes in $M_{n\times n}(\Fq)$ with minimum distance $n$, compared to the number of $n$-dimensional subspaces of $M_{n\times n}(\Fq)$. The number of MRD codes is calculated from the known classifications for isotopy classes of semifields, as listed in the table before Remark \ref{rem:table}, and their automorphism groups as sets of matrices (called {\it autotopy groups} in the semifield literature).
\begin{center}
\begin{tabular}{|c|c|c|c|c|c|}
\hline
$q$&$n$&\#Spaces&\#MRD&\#Classes&\#MRD Classes\\
\hline
$2$&$3$&$788035$&$192$&48&1\\
$2$&$4$&$9.1\times 10^{14}$&$2.7\times 10^{7}$&&3\\
$2$&$5$&$4.3\times 10^{30}$&$2.6\times 10^{14}$&&6\\
$2$&$6$&$5.2\times 10^{54}$&$3.8\times 10^{22}$&&332\\
\hline
$3$&$3$&$6.7\times 10^{8}$&$8.7\times 10^{5}$&&2\\
$3$&$4$&$1.7\times 10^{26}$&$2.6\times 10^{14}$&&27\\
$3$&$5$&$4.4\times 10^{52}$&$3.3\times 10^{23}$&&23\\
\hline
$5$&$3$&$5.0\times 10^{12}$&$4.5\times 10^{9}$&&4\\
\hline
\end{tabular}
\end{center}

There are 451584 $\FF_2$-linear MRD codes in $M_{4\times 4}(\FF_2)$ with minimum distance $3$, all equivalent to the Delsarte-Gabidulin code. There are $6.3\times 10^{18}$ $8$-dimensional subspaces of $M_{4\times 4}(\FF_2)$.

\begin{center}
{\bf Open Problem:} Extend this table so that a clearer pattern may emerge.
\end{center}

\section{Duality}\label{sec:duality}

Delsarte \cite{Delsarte1978} showed, using the theory of association schemes, that the rank distribution of a subspace of matrices determines the rank distribution of its {\it dual}, defined with respect to a particular symmetric bilinear form. This gives the rank-metric analogue of the well-known {\it MacWilliams identity} for codes in the Hamming metric \cite{MacWSl}. 

\subsection{MacWilliams-like identity}

\begin{definition}
Given an $\Fq$-linear code $\C\subseteq M_{n \times m}(\Fq)$, its {\it Delsarte dual} is the set
\[
\C^{\perp} = \{B:B\in M_{n \times m}(\Fq), \Tr(AB^T)=0 \quad \forall A\in \C\}.
\]
\end{definition}

We will denote the above bilinear form by $(A,B) = \Tr(AB^T)$. 

\begin{theorem}
\label{thm:MacW}
Suppose $\C\subseteq M_{n \times m}(\Fq)$ is $\Fq$-linear, with rank-distribution $r(\C) = (a_0,a_1,\ldots,a_m)$. Then the rank distribution $r(\C^\perp) =  (a_0^\perp,a_1^\perp,\ldots,a_m^\perp)$ of its Delsarte dual $\C^\perp$ is determined by
\[
a_j^\perp =  \frac{1}{|\C|}\sum_{i=0}^m a_i \sum_{s=0}^m (-1)^{j-s}q^{ns+{j-s\choose 2}}{ m-s\brack m-j}{m-i \brack s}.
\]
\end{theorem}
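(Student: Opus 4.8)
The plan is to prove this via the character-theoretic \emph{MacWilliams transform} (equivalently Poisson summation on the additive group $M_{n\times m}(\Fq)$), which reduces the entire statement to the computation of a single family of ``eigenvalues'' attached to the bilinear form $(A,B)=\Tr(AB^T)$. Fix a nontrivial additive character $\chi\colon \Fq\to\CC^\times$. For any function $g\colon M_{n\times m}(\Fq)\to\CC$ define its Fourier transform $\hat g(A)=\sum_{B}g(B)\,\chi((A,B))$. Since $\C$ is $\Fq$-linear and $\C^\perp$ is its orthogonal complement with respect to $(\cdot,\cdot)$, orthogonality of characters over the quotient $M_{n\times m}(\Fq)/\C$ yields the transform identity
\[
\sum_{B\in\C^\perp}g(B)=\frac{1}{|\C|}\sum_{A\in\C}\hat g(A).
\]
Applying this to $g=\mathbf 1_j$, the indicator of the set of matrices of rank exactly $j$, gives $a_j^\perp=\frac1{|\C|}\sum_{A\in\C}\widehat{\mathbf 1_j}(A)$.

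Second, I would show that $\widehat{\mathbf 1_j}(A)$ depends only on $\rank(A)$, so that it may be written as $P_j(\rank(A))$ for a function $P_j$ of a single integer argument. This follows from the compatibility of the trace form with the rank-preserving action of $\GL(n,q)\times\GL(m,q)$: for $X\in\GL(n,q)$, $Y\in\GL(m,q)$ cyclicity of the trace gives $(XAY,B)=(A,X^TBY^T)$, and since $B\mapsto (X^T)^{-1}B(Y^T)^{-1}$ is a rank-preserving bijection permuting the rank-$j$ matrices, $\widehat{\mathbf 1_j}(XAY)=\widehat{\mathbf 1_j}(A)$. As $\GL(n,q)\times\GL(m,q)$ acts transitively on matrices of each fixed rank, the claim follows, and we obtain $a_j^\perp=\frac1{|\C|}\sum_{i=0}^m a_i\,P_j(i)$ exactly as in the statement.

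It then remains to evaluate $P_j(i)$ explicitly and match it with the inner sum. Here I would choose the canonical rank-$i$ representative $A_i=\sum_{k=1}^i E_{kk}$, for which $(A_i,B)=\sum_{k=1}^iB_{kk}$, so that $P_j(i)=\sum_{\rank(B)=j}\chi\big(\sum_{k=1}^iB_{kk}\big)$. Evaluating this weighted count of rank-$j$ matrices is the combinatorial heart of the proof: partitioning the rank-$j$ matrices according to the interaction of their row and column spaces with the fixed $i$ coordinates and summing the resulting character values produces, after a $q$-binomial inclusion--exclusion, precisely the $q$-Krawtchouk polynomial $\sum_{s=0}^m(-1)^{j-s}q^{ns+\binom{j-s}{2}}{m-s\brack m-j}{m-i\brack s}$. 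This final identification is the step I expect to be the main obstacle, as it requires the exact enumeration of matrices of prescribed rank subject to a linear (trace) constraint.

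As an alternative that isolates the same difficulty more combinatorially, one may avoid characters and instead work with the $q$-binomial moments $\sum_i a_i{m-i\brack \nu}=\sum_{\dim V=\nu}|\C_V|$, where $\C_V$ is the row-shortening of $\C$ by $V$ (using the Lemma relating $\im_l$ and $\ker_r$ from the excerpt); relating $\dim\C_V$ to a shortening of $\C^\perp$ via the trace-form duality, and then applying $q$-binomial (Möbius) inversion on the subspace lattice, recovers the stated formula. Either route reduces the theorem to a single explicit $q$-analogue computation; Delsarte's original argument obtains the same $P_j(i)$ directly as the eigenvalues of the bilinear forms association scheme.
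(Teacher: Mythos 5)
A preliminary remark: the chapter states this theorem without proof, attributing it to Delsarte and pointing to Gadouleau and Ravagnani for alternative arguments, so there is no in-paper proof to compare yours against line by line. Your outline reconstructs the standard route: Poisson summation over the additive group $M_{n\times m}(\Fq)$ reduces the identity to evaluating the Fourier transform $\widehat{\mathbf 1_j}$ of the rank-$j$ indicator on $\C$, and your invariance argument (via $(XAY,B)=(A,X^TBY^T)$, the same manipulation as in Lemma \ref{lem:dualequiv}) correctly shows that $\widehat{\mathbf 1_j}(A)$ depends only on $\rank(A)$, since $\GL(n,q)\times\GL(m,q)$ is transitive on matrices of each fixed rank. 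Both of these steps are sound; the character-sum version is Delsarte's association-scheme proof in different clothing, and your ``alternative'' via $q$-binomial moments and M\"obius inversion is essentially Ravagnani's elementary proof.

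The genuine gap is that the evaluation of $P_j(i)$ --- which is where the entire specific formula lives --- is asserted rather than derived: what you have actually proved is only that $a_j^\perp=\frac{1}{|\C|}\sum_i a_iP_j(i)$ for \emph{some} coefficients $P_j(i)$ independent of $\C$. You flag this as the main obstacle, but the sketch (``partitioning the rank-$j$ matrices according to the interaction of their row and column spaces \ldots'') is not yet an argument. The missing computation is short if one sums over matrices of \emph{bounded} row space rather than exact rank: for a fixed $s$-dimensional $U\leq\Fq^m$, the set $\{B:\im_l(B)\leq U\}$ is an $\Fq$-subspace of dimension $ns$ whose orthogonal complement under $\Tr(AB^T)$ is $\{A:\im_l(A)\leq U^*\}$, so the sum of $\chi((A,B))$ over it equals $q^{ns}$ if $\im_l(A)\leq U^*$ and $0$ otherwise. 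Summing over all $s$-dimensional $U$ and counting both ways gives, for $\rank(A)=i$,
\[
\sum_{j}{m-j\brack s-j}P_j(i) \;=\; q^{ns}{m-i\brack s},
\]
and $q$-binomial M\"obius inversion, with kernel $(-1)^{j-s}q^{{j-s\choose 2}}{m-s\brack m-j}$, turns this into exactly the inner sum of the theorem. With that paragraph supplied your proof is complete; without it, the statement established is strictly weaker than the one claimed.
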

Delsarte first proved this \cite{Delsarte1978}. Gadouleau \cite{Gadouleau} proved a more concise statement, but as it requires the theory of $q$-polynomials (which are not to be confused with linearized polynomials), we do not include it here. Ravagnani \cite{Ravagnani} recently gave a more elementary proof of this, including useful recursive formulae, and a more general formulation in \cite{RavagnaniLattices}.

The following important theorem follows immediately from the above.
\begin{theorem}[\cite{Delsarte1978}]
\label{thm:dualMRD}
An $\Fq$-linear code $\C$ is MRD if and only if its Delsarte dual $\C^\perp$ is MRD.
\end{theorem}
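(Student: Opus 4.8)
The plan is to deduce Theorem \ref{thm:dualMRD} from the MacWilliams-like identity of Theorem \ref{thm:MacW}, since the relationship between the rank distributions $r(\C)$ and $r(\C^\perp)$ is the substantive content and has already been established. First I would recall that a code $\C\subseteq M_{n\times m}(\Fq)$ with $m\leq n$ is MRD with minimum distance $d$ precisely when its weights are maximally concentrated at the high end: the minimum nonzero rank is $d$, so $a_0=1$ and $a_1=\cdots=a_{d-1}=0$, with $|\C|=q^{n(m-d+1)}$ attaining the Singleton-like bound. The key structural fact is that the rank distribution of an $\Fq$-linear MRD code is \emph{forced}: by the theorem of Delsarte stated earlier in the excerpt (the rank distribution of an additive MRD code is completely determined by $q,m,n,d$), once we know $\C$ is MRD the entire tuple $(a_0,\ldots,a_m)$ is pinned down by the given closed formula.

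The main step is to show that if $\C$ has minimum distance $d$ (so $a_1=\cdots=a_{d-1}=0$ and $a_0=1$), then $\C^\perp$ has minimum distance $d^\perp = m-d+2$; equivalently, I must show $a_1^\perp=\cdots=a_{d^\perp-1}^\perp=0$. I would substitute the MRD rank distribution into the MacWilliams formula of Theorem \ref{thm:MacW} and verify that $a_j^\perp=0$ for $1\leq j\leq m-d+1$. Concretely, plugging the explicit MRD values of the $a_i$ into
\[
a_j^\perp = \frac{1}{|\C|}\sum_{i=0}^m a_i \sum_{s=0}^m (-1)^{j-s}q^{ns+{j-s\choose 2}}{m-s\brack m-j}{m-i\brack s},
\]
the inner double sum should collapse, via Gaussian-binomial identities, to zero in exactly the low-weight range. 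Once $\C^\perp$ is known to have minimum distance $m-d+2$, a size count finishes the implication: $|\C^\perp| = q^{nm}/|\C| = q^{nm}/q^{n(m-d+1)} = q^{n(d-1)} = q^{n(m-(m-d+2)+1)}$, which is exactly the Singleton bound for a code of minimum distance $m-d+2$, so $\C^\perp$ is MRD. The converse follows by the involutive nature of the Delsarte dual, $(\C^\perp)^\perp=\C$, so no separate argument is needed.

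The hard part will be the $q$-binomial manipulation showing that the alternating sums vanish identically over the correct index range; this is a genuine identity among Gaussian coefficients rather than a soft argument, and it is the only place where real computation enters. There is, however, a cleaner route that sidesteps this: since Theorem \ref{thm:MacW} guarantees that $r(\C^\perp)$ depends only on $r(\C)$, and the earlier Delsarte theorem guarantees that the MRD rank distribution is the \emph{unique} distribution of a code meeting the bound, I would instead argue by uniqueness and dimension. Because duality is size-preserving in the sense $|\C|\cdot|\C^\perp| = q^{nm}$ and the transform of Theorem \ref{thm:MacW} is invertible, the image under duality of the (unique) MRD distribution must itself be a valid rank distribution of a code of size $q^{n(d-1)}$; matching this against the Singleton bound for that size forces it to be the MRD distribution for distance $m-d+2$. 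I expect the referee-facing version to favor whichever of these two arguments is shorter, but the computational vanishing of $a_j^\perp$ in the low range is the most direct and is the step I would expect to dominate the write-up.
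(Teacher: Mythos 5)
Your main argument matches the paper's: the paper gives no proof beyond the remark that the theorem ``follows immediately from the above,'' i.e.\ from Theorem~\ref{thm:MacW}, and your route --- substitute the MRD rank distribution into the MacWilliams-like identity to get $a_j^\perp=0$ for $1\le j\le m-d+1$, then conclude with the size count $|\C^\perp|=q^{nm}/|\C|=q^{n(d-1)}$ and the involutivity $(\C^\perp)^\perp=\C$ --- is exactly the standard way to make that remark precise. One caution: your proposed ``cleaner route'' at the end does not actually close, because the Singleton-like bound applied to a code of size $q^{n(d-1)}$ only yields $d^\perp\le m-d+2$ and cannot by itself force the transformed distribution to be the MRD one, so the vanishing of the low-index $a_j^\perp$ (your first, computational argument) is unavoidable.
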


\subsection{Other dualities satisfying the MacWilliams-like identities}

The bilinear form used by Delsarte to define duality is not unique with the property that it satisfies the MacWilliams-like identities. We show how this can occur, in particular when moving between different representations of rank-metric codes. If we are just interested in the rank-distribution of a dual, then it does not make much difference which of these forms we choose. However if we wish to speak of {\it self-dual} codes, as in \cite{NebeWillems}, then the choice of form does come in to play.

The following is well-known. We include a proof for illustrative purposes.
\begin{lemma}\label{lem:dualequiv}
For invertible matrices $X\in \GL(m,q)$, $Y\in \GL(n,q)$, it holds that
\[
(X\C Y)^\perp = X^{-T}\C^\perp Y^{-T}.
\]
\end{lemma}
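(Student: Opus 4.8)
The plan is to reduce the whole statement to a single adjointness identity for the trace bilinear form $(P,Q) = \Tr(PQ^T)$, and then to use the invertibility of $X$ and $Y$ to turn a membership equivalence into an equality of sets. Throughout I would work directly from the definition of the Delsarte dual, so that $B$ lies in $(X\C Y)^\perp$ precisely when $(XAY,B) = 0$ for every $A\in\C$.

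First I would establish the key identity
\[
(XAY,B) = (A,\, X^T B Y^T) \qquad \text{for all } A, B,
\]
which is the statement that two-sided multiplication by $(X,Y)$ is adjoint, with respect to $(\cdot,\cdot)$, to two-sided multiplication by $(X^T,Y^T)$. To prove it I would expand $(XAY,B) = \Tr\big((XAY)B^T\big) = \Tr(XAYB^T)$ and apply the cyclic invariance of the trace to rewrite this as $\Tr\big(A\,(YB^TX)\big)$. Recognising $YB^TX = (X^T B Y^T)^T$ then gives exactly $\Tr\big(A\,(X^TBY^T)^T\big) = (A,\,X^TBY^T)$, as required. This is the one nontrivial computational step, and its only real content is bookkeeping of transposes and the order of factors.

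With the identity in hand, the equivalence chain is immediate: $B\in(X\C Y)^\perp$ holds iff $(A,\,X^TBY^T)=0$ for all $A\in\C$, i.e.\ iff $X^TBY^T\in\C^\perp$. Writing $C = X^T B Y^T$, so that $B = X^{-T}C Y^{-T}$, this says precisely that the elements of $(X\C Y)^\perp$ are exactly those of the form $X^{-T}C Y^{-T}$ with $C\in\C^\perp$; that is, $(X\C Y)^\perp = X^{-T}\C^\perp Y^{-T}$. The point where invertibility of $X$ and $Y$ is essential is here: it guarantees that $C\mapsto X^{-T}C Y^{-T}$ is a bijection on $M_{n\times m}(\Fq)$, so that the equivalence upgrades to genuine set equality rather than a one-sided inclusion.

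The step I expect to require the most care is the adjointness identity, specifically getting the transposes and the left/right placement of $X$ and $Y$ correct; once that is pinned down, everything else is a formal consequence. I would also keep an eye on the dimension conventions, since the sizes of $X$ and $Y$ must match the $n\times m$ shape of the elements of $\C$ for the products $XAY$, and correspondingly $X^{-T}\C^\perp Y^{-T}$, to be well defined.
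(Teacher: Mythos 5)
Your proposal is correct and follows essentially the same route as the paper: both prove the adjointness identity $\Tr(XAYB^T)=\Tr\bigl(A(X^TBY^T)^T\bigr)$ via cyclicity of the trace and then read off the set equality from the resulting membership equivalence, with invertibility of $X$ and $Y$ giving the bijection $B\mapsto X^TBY^T$. The only difference is presentational — you isolate the adjointness identity as a named intermediate step, whereas the paper runs the whole argument as a single chain of equivalences.
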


\begin{proof}
By definition,
\begin{align*}
B\in (X\C Y)^\perp&\Leftrightarrow \Tr(XAYB^T)=0\forall A\in \C \\
&\Leftrightarrow \Tr(AYB^TX)=0\forall A\in \C \\
&\Leftrightarrow \Tr(A(X^TBY^T)^T)=0\forall A\in \C \\
&\Leftrightarrow X^TBY^T\in \C ^\perp\\
&\Leftrightarrow B\in X^{-T}\C ^\perp Y^{-T},
\end{align*}
proving the claim.
\end{proof}

In order to apply this in the various commonly used representations, we need to be careful to ensure that we are using a valid bilinear form. Moreover taking the dual of a punctured or shortened code requires particular care. A detailed examination of this can be found in \cite{ByrneRavagnani}.

Note that the trace function for matrices can be seen as a special case of the following definition of a trace function on tensors. Consider $V_1\otimes V_2$, and let $\beta_1$, $\beta_2$ be non degenerate bilinear forms on $V_1,V_2$ respectively. Then 
\[
(a\otimes b,c\otimes d)_{(\beta_1,\beta_2)}:= \beta_1(a,c)\beta_2(b,d),
\] 
and we extend this linearly to all elements of $V_1\otimes V_2$.

Taking $V_1 = \Fq^m$, $V_2 = \Fq^n$, and $\beta_i$ the usual dot product on $V_i$, then 
\[
\pt{A_1,A_2}_{\beta_1,\beta_2} = \Tr(A_{1}A_{2}^t),
\]
where we identify the tensor $a\otimes b$ with the matrix $ab^t$.

We consider now what happens if we were to choose different bilinear forms on $V_i$. We recall the following well-known fact.
\begin{proposition}
Let $V= \Fq^m$. Then any non degenerate bilinear form $\beta$ on $V$ can be expressed uniquely as
\[
\beta(u,v) = u^TBv,
\]
where $B$ is an invertible matrix in $M_{m\times m}(\Fq)$. 
\end{proposition}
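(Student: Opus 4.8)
The plan is to establish the bijection between nondegenerate bilinear forms on $V=\Fq^m$ and invertible matrices $B\in M_{m\times m}(\Fq)$ by writing down the matrix of the form in the standard basis and then checking that nondegeneracy translates exactly into invertibility of that matrix. First I would fix the standard basis $e_1,\ldots,e_m$ of $\Fq^m$ and define $B$ to be the matrix with $(i,j)$-entry $B_{ij}=\beta(e_i,e_j)$. Bilinearity of $\beta$ immediately gives, for $u=\sum_i u_i e_i$ and $v=\sum_j v_j e_j$, the expansion $\beta(u,v)=\sum_{i,j}u_i v_j \beta(e_i,e_j)=u^TBv$, which is exactly the claimed formula. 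This takes care of existence of a representing matrix.

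For uniqueness, I would argue that the map sending a matrix $B$ to the form $(u,v)\mapsto u^TBv$ is injective: if $u^TBv=u^TB'v$ for all $u,v$, then taking $u=e_i$ and $v=e_j$ recovers $B_{ij}=B'_{ij}$ for every pair $(i,j)$, so $B=B'$. This shows the representation is unique, independent of whether $B$ is invertible.

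It then remains to identify which matrices $B$ correspond to \emph{nondegenerate} forms. I would recall that $\beta$ is nondegenerate precisely when the only vector $v$ satisfying $\beta(u,v)=0$ for all $u$ is $v=0$ (and symmetrically on the left). In matrix terms, $\beta(u,v)=u^TBv=0$ for all $u$ forces $Bv=0$, so the right-radical of $\beta$ is exactly $\ker(B)$. Hence $\beta$ is nondegenerate if and only if $\ker(B)=0$, which over the finite-dimensional space $\Fq^m$ is equivalent to $B$ being invertible. Combining existence, uniqueness, and this nondegeneracy characterisation yields the stated bijection.

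The steps here are all routine linear algebra, so I do not expect a genuine obstacle; the only point requiring a little care is being explicit that nondegeneracy of a bilinear form on a finite-dimensional space is a two-sided condition whose left- and right-radicals coincide in the sense that $\ker(B)=0$ iff $\ker(B^T)=0$, both equivalent to $\det B\neq 0$. I would state this equivalence plainly rather than belabour it, since it follows at once from the rank--nullity theorem applied to the square matrix $B$.
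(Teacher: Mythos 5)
Your proof is correct and complete; it is the standard coordinate argument (matrix of the form in the standard basis, uniqueness by evaluating at basis vectors, nondegeneracy $\Leftrightarrow$ trivial radical $\Leftrightarrow$ invertibility). The paper states this proposition as a well-known fact and supplies no proof of its own, so there is nothing to contrast with --- your argument is exactly the one the author is implicitly relying on.
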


\begin{proposition}
\label{prop:MacWbeta}
Let $V_1= \Fq^m, V_2=\Fq^n$, and suppose $\beta_i$ is a non degenerate  bilinear form on $V_i$. Then the  bilinear form on $M_{n \times m}(\Fq)$ defined by $(\beta_1,\beta_2)$ satisfies the Delsarte-MacWilliams identities.
\end{proposition}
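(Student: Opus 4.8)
The plan is to reduce the general form $(\cdot,\cdot)_{(\beta_1,\beta_2)}$ to the standard Delsarte form $(A,B)=\Tr(AB^T)$ precomposed with an invertible, rank-preserving linear map, and then invoke Theorem \ref{thm:MacW}. First I would use the previous proposition to write $\beta_1(u,v)=u^TB_1v$ and $\beta_2(u,v)=u^TB_2v$ for uniquely determined matrices $B_1\in\GL(m,q)$ and $B_2\in\GL(n,q)$; nondegeneracy is exactly what guarantees the invertibility of the $B_i$, which is the only property of the $\beta_i$ I will need (in particular the argument never uses symmetry of the forms).

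Next I would evaluate the new form on rank-one tensors. Writing $\beta_1(a,c)=\gamma_1(a,B_1c)$ and $\beta_2(b,d)=\gamma_2(b,B_2d)$, where $\gamma_i$ denotes the dot product on $V_i$, gives $(a\otimes b,\,c\otimes d)_{(\beta_1,\beta_2)}=(a\otimes b,\,(B_1c)\otimes(B_2d))_{(\gamma_1,\gamma_2)}$. Extending bilinearly, this exhibits the new form as the standard trace form with its second argument transformed by the invertible operator $B_1\otimes B_2$ on $V_1\otimes V_2$. Translating $B_1\otimes B_2$ through the identification of tensors with matrices (the paper's $a\otimes b\mapsto ab^t$), this operator is realised as a map of the shape $M\mapsto B_2MB_1^T$; the essential point is only that it is left multiplication by one invertible matrix followed by right multiplication by another.

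From this I obtain a clean description of the new dual: $B$ lies in the $(\beta_1,\beta_2)$-dual of $\C$ if and only if $B_2BB_1^T\in\C^\perp$, so that this dual equals $B_2^{-1}\C^\perp B_1^{-T}$. (Alternatively, once the form is written via the standard pairing, this is immediate from Lemma \ref{lem:dualequiv}.) Since left and right multiplication by invertible matrices leaves the rank of every matrix unchanged, the $(\beta_1,\beta_2)$-dual has precisely the same rank distribution as the Delsarte dual $\C^\perp$.

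Finally I would conclude. The Delsarte--MacWilliams identity of Theorem \ref{thm:MacW} expresses the rank distribution of the dual purely in terms of $q,m,n$ and the rank distribution $r(\C)$, making no reference to the chosen form beyond its role in defining the dual. As the $(\beta_1,\beta_2)$-dual shares its rank distribution with $\C^\perp$, the same identity relates $r(\C)$ to $r(\C^{\perp_{(\beta_1,\beta_2)}})$, which is the assertion. The step most prone to error---and hence the main obstacle---is the bookkeeping in the second paragraph: matching the tensor-to-matrix identification, together with the $m\leftrightarrow n$ and transpose conventions implicit in $a\otimes b\mapsto ab^t$, so that $B_1\otimes B_2$ is correctly realised as a two-sided multiplication. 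Everything else is formal.
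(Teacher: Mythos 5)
Your proposal is correct and follows essentially the same route as the paper's own proof: represent the $\beta_i$ by invertible matrices $B_i$, compute the new form on rank-one elements to exhibit it as the standard trace form twisted by two-sided multiplication with invertible matrices, conclude that the $(\beta_1,\beta_2)$-dual is equivalent to $\C^\perp$ (via Lemma \ref{lem:dualequiv}) and hence has the same rank distribution. The bookkeeping caveat you flag about the tensor-to-matrix identification is real but harmless, since, as you note, only the fact that the twist is a two-sided multiplication by invertible matrices is used.
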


\begin{proof}
Let $\beta_i$ have associated matrix $B_i$ with respect to the standard basis of $V_i$. Then
\begin{align*}
(ab^T,cd^t)_{(\beta_1,\beta_2)} &= \beta_1(a,c)\beta_2(b,d)\\
&= (a^tB_1 c)(b^tB_2 d)\\
&= (c^tB_1^t a)(b^t B_2 d)\\
&= \Tr ((c^tB_1^t a)(b^t B_2 d))\\
&= \Tr ((B_1^t ab^t)(B_2 dc^t))\\
&=\Tr((B_1^t ab^t)(cd^t B_2^t))\\
&= (B_1^t ab^t,cd^t B_2^t).
\end{align*}
Thus we have that $(A,B)_{(\beta_1,\beta_2)}  = (B_1^tA,BB_2^t)$. Then a similar argument to Lemma \ref{lem:dualequiv} shows that
\[
\C ^{\perp_{(\beta_1,\beta_2)}} = (B_1^t\C B_2)^\perp = B_1^{-1}\C B_2^{-t}.
\]
Hence $\C ^{\perp_{(\beta_1,\beta_2)}}$ is equivalent to $\C ^\perp$, and so has the same rank distribution, proving the claim.
\end{proof}

\subsection{Duality for linearized polynomials}

When $m=n$, we use the identification from Section \ref{sec:representations} between the tensor $a\otimes b\in \Fqn\otimes \Fqn$ and the linearized polynomial $a\tr(bx)$. Then taking the non degenerate bilinear form $\beta(a,b) := \tr(ab)$, where $\tr$ denotes the field trace from $\Fqn$ to $\Fq$, we get the bilinear form
\begin{align*}
(a\tr(bx),c\tr(dx)) &= \tr(ac)\tr(bd) = \tr\left(\sum_{i=0}^{n-1} (a\tr(bx))_i (c\tr(dx))_i\right).
\end{align*}
Extending this to all linearized polynomials, we get that
\[
(f,g) = \tr\left(\sum_{i=0}^{n-1} f_i g_i\right).
\]
Noting that the adjoint of $c\tr(dx)$ is $d\tr(cx)$, we can also see that
\[
(f,g) = \tr\left( (f\hat{g})_0 \right),
\]
where $\hat{g}$ denotes the adjoint of $g$ with respect to $\beta$, and further that
\[
(f,g) = \Tr(D_fD_g^t).
\]
We now verify that duality with respect to this form respects the MacWilliams-like identities.

\begin{proposition}
Let $\C$ be an $\Fq$-linear set of $\sigma$-linearized polynomials. Define
\[
\C^{\perp'} = \left\{g:\tr\left(\sum_{i=0}^{n-1} f_i g_i\right)=0 ~\forall f\in \C\right\}.
\]
Then $\C$ and $\C^{\perp'}$ satisfy the identities in Theorem \ref{thm:MacW}. Moreover, $\C$ is MRD if and only if $\C^{\perp'}$ is MRD.

Furthermore, if $\C$ is $\Fqn$-linear, then
\[
\C^{\perp'} = \left\{g:\sum_{i=0}^{n-1} f_i g_i=0 ~\forall f\in \C\right\}.
\]
\end{proposition}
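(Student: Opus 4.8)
The plan is to transport the entire statement to the matrix setting via the identification $f\mapsto D_f$ between $\sigma$-linearized polynomials of $\sigma$-degree at most $n-1$ and matrices in $M_{n\times n}(\Fq)$, exploiting the identity $(f,g)=\Tr(D_fD_g^t)$ established immediately above. By \cite{GoQu2009b} this identification is an $\Fq$-linear bijection, and by the definition of the rank of a linearized polynomial it preserves rank; hence for any $\Fq$-linear set $\C$ the rank distribution of $\C$ equals that of the matrix code $D_\C:=\{D_f:f\in\C\}$.

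First I would observe that the form $(f,g)=\tr(\sum_{i=0}^{n-1}f_ig_i)$ is precisely the pullback under $f\mapsto D_f$ of the Delsarte form $(A,B)=\Tr(AB^T)$, since $(f,g)=\Tr(D_fD_g^t)$. Therefore $g\in\C^{\perp'}$ if and only if $\Tr(D_fD_g^t)=0$ for all $f\in\C$, i.e.\ if and only if $D_g\in(D_\C)^\perp$; in other words $D_{\C^{\perp'}}=(D_\C)^\perp$. Applying Theorem~\ref{thm:MacW} to the matrix code $D_\C$ and transporting the resulting rank distributions back along the rank-preserving bijection yields the MacWilliams-like identities for $\C$ and $\C^{\perp'}$, and the MRD equivalence follows at once from Theorem~\ref{thm:dualMRD} applied to $D_\C$.

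For the furthermore part, set $\langle f,g\rangle:=\sum_{i=0}^{n-1}f_ig_i\in\Fqn$, so that $(f,g)=\tr(\langle f,g\rangle)$. The inclusion $\{g:\langle f,g\rangle=0\ \forall f\in\C\}\subseteq\C^{\perp'}$ is immediate since $\tr(0)=0$. For the reverse inclusion I would use $\Fqn$-linearity: left multiplication by $\lambda\in\Fqn$ acts on coefficients by $(\lambda f)_i=\lambda f_i$, so for $f\in\C$ and $\lambda\in\Fqn$ both $\lambda f\in\C$ and $\langle\lambda f,g\rangle=\lambda\langle f,g\rangle$ hold. Thus if $g\in\C^{\perp'}$ then $\tr(\lambda\langle f,g\rangle)=0$ for every $\lambda\in\Fqn$, and non-degeneracy of the field trace $\tr:\Fqn\to\Fq$ forces $\langle f,g\rangle=0$.

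Most of the argument is bookkeeping, resting on the already-verified identity $(f,g)=\Tr(D_fD_g^t)$; the one step needing genuine care is the furthermore part, where one must correctly read off the action of $\Fqn$-linearity on the coefficients of a linearized polynomial before invoking non-degeneracy of the trace.
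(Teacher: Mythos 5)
Your overall strategy (transport the statement to the matrix setting along a rank-preserving identification under which the form becomes the Delsarte form, then quote Theorems \ref{thm:MacW} and \ref{thm:dualMRD}) is the right one, but the specific identification you chose does not deliver what you need, and this is a genuine gap. The Dickson matrix $D_f$ has entries in $\Fqn$, not $\Fq$, so $f\mapsto D_f$ is not a bijection onto $M_{n\times n}(\Fq)$ and the code $D_\C$ is not an object to which Theorem \ref{thm:MacW} applies: that theorem concerns the Delsarte dual taken inside the full space $M_{n\times m}(\Fq)$, whereas your equation $D_{\C^{\perp'}}=(D_\C)^{\perp}$ only holds if the orthogonal complement is taken inside the $n^2$-dimensional $\Fq$-space of Dickson matrices sitting in $M_{n\times n}(\Fqn)$. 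If you instead try to land in $M_{n\times n}(\Fq)$ via $f\mapsto X_{f,\B}$ (which does preserve rank), the identity $\Tr(X_{f,\B}X_{g,\B}^t)=\Tr(D_fD_g^t)$ fails in general: since $X_{f,\B}=M_\B^{-1}D_fM_\B$, the trace form is only preserved when $M_\B M_\B^t=I$, i.e.\ when $\B$ is a self-dual basis, and such bases do not exist for all $q,n$. So the claim that the form $(f,g)=\tr\bigl(\sum_i f_ig_i\bigr)$ is ``precisely the pullback of the Delsarte form'' under a rank-preserving bijection onto $M_{n\times n}(\Fq)$ is not established by what you wrote. The missing ingredient is exactly Proposition \ref{prop:MacWbeta}: one does not need an isometry at all, only the observation that $(f,g)$ arises from a pair of nondegenerate bilinear forms ($\beta_i(a,b)=\tr(ab)$ under the tensor identification $a\otimes b\leftrightarrow a\tr(bx)$), whence the dual with respect to it is $B_1^{-1}\C^{\perp}B_2^{-t}$, an equivalent code to the Delsarte dual with the same rank distribution. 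That is the paper's one-line proof of the first two claims, and your argument should route through it (or reprove its content) rather than through Theorem \ref{thm:MacW} applied to $D_\C$.

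Your treatment of the ``furthermore'' part is correct and slightly different from the paper's: the paper deduces the reverse inclusion from a dimension count ($\dim_{\Fq}\C^{\perp'}=n^2-\dim_{\Fq}\C$ versus the $\Fqn$-codimension of the $\Fqn$-orthogonal complement), whereas you use that $(\lambda f)_i=\lambda f_i$ for $\lambda\in\Fqn$ together with nondegeneracy of the field trace to force $\sum_i f_ig_i=0$ directly. Both work; yours has the small advantage of not needing $\C^{\perp'}$ to have been computed as a dual of the correct dimension beforehand, and you correctly identified the one point needing care there, namely how $\Fqn$-linearity acts on coefficients.
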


\begin{proof}
The first two statements follow immediately by Proposition \ref{prop:MacWbeta} and Theorem \ref{thm:dualMRD}. The final statement follows from a simple consideration of dimension.
\end{proof}

%
%
%
%
%
%


%
%
%

\section{Symmetric rank-metric codes}

In this section we restrict to considering subspaces of symmetric matrices (or self-adjoint endomorphisms, self-adjoint linearized polynomials etc.). The problem becomes more difficult, due to the fact that the subgroup which preserves both rank and symmetry has more than one orbit on elements of each rank. Let $S_n(\Fq)$ denote the set of symmetric $n\times n$ matrices over $\Fq$. We can define an action of $\GL(n,q)$ on $S_n(\Fq)$ by
\[
A\mapsto X^TAX.
\]

\begin{remark}
Note that we do not use this action to define equivalence of subspaces of symmetric matrices, only to obtain bounds for such codes. The reason for this is that there exist $n$-dimensional subspaces of $S_n(\Fq)$ which are equivalent as rank-metric codes, but not equivalent under this action, even if we include field automorphisms. See \cite{PottZhou} and references therein.
\end{remark}

There are two orbits on elements of each rank; we will call them plus and minus type. For even rank, this corresponds to the well-known distinction between elliptic and hyperbolic quadrics. For this reason, it makes sense to define the {\it type distribution} of a code $\C\subset S_n(\Fq)$; that is, a string $(a_{i,\pm}:i \in \{0,\ldots,n\})$, where $a_{i,+}$ denotes the number of elements of $\C$ of rank $i$ and plus type.

\begin{theorem}[\cite{Schmidt}]
\label{thm:sym}
Suppose $\C\subset S_n(\Fq)$ has minimum distance $d$. 
If $\C$ is additive, then 
\[
|\C|\leq\left\{
\begin{array}{ll}
q^{\frac{n(n-d+2)}{2}}&n-d \textrm{ even}\\
q^{\frac{(n+1)(n-d+1)}{2}}&n-d \textrm{ odd}\\
\end{array}\right.
\]

If $d$ is odd and $\C$ is not necessarily additive, then
\[
|\C|\leq\left\{
\begin{array}{ll}
q^{\frac{n(n-d+2)}{2}}&n \textrm{ odd}\\
q^{\frac{(n+1)(n-d+1)}{2}}&n \textrm{ even}\\
\end{array}\right.
\]
If $d$ is even and $\C$ is not necessarily additive, then
\[
|\C|\leq\left\{
\begin{array}{ll}
q^{\frac{n(n-d+3)}{2}}\left(\frac{1+q^{-m+1}}{q+1}\right)&n \textrm{ odd}\\
q^{\frac{(n+1)(n-d+2)}{2}}\left(\frac{1+q^{-m+d-1}}{q+1}\right)&n \textrm{ even}\\
\end{array}\right.
\]
\end{theorem}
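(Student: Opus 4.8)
The plan is to treat all three regimes through the \emph{association scheme of symmetric bilinear forms}: the group $\GL(n,q)$ acting by $A\mapsto X^TAX$ has exactly two orbits (plus and minus type) on the symmetric matrices of each fixed rank, and since this action is compatible with addition, these orbits are the relations of a commutative translation association scheme on $S_n(\Fq)$. First I would assemble the (dual) eigenvalues of this scheme, which are classically known and which encode a MacWilliams-type transform refined by the type distribution $(a_{i,\pm})$. All three bounds will then come from feeding the minimum-distance hypothesis into Delsarte's linear-programming machinery for this scheme, so recording the correct eigendata is the common first step.

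For the additive case I would pass to the Delsarte dual $\C^{\perp}$ inside $S_n(\Fq)$ with respect to the nondegenerate trace form $(A,B)=\Tr(AB)$, so that $\dim_{\Fq}\C+\dim_{\Fq}\C^{\perp}=\tfrac{n(n+1)}{2}$. The hypothesis $d(\C)=d$ forces the rank classes $a_i$ to vanish for $1\le i\le d-1$, and the MacWilliams transform of the scheme (the analogue for $S_n(\Fq)$ of Theorem \ref{thm:MacW}, refined by type) expresses the dual distribution $(a_{j,\pm}^{\perp})$ as explicit linear combinations of the $(a_{i,\pm})$; imposing $a_{j,\pm}^{\perp}\ge 0$ together with the vanishing of the low classes gives a linear system whose solution bounds $\dim_{\Fq}\C$. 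The eigenvalues of the symmetric-forms scheme depend on the parity of the rank, and it is exactly this parity-dependence, feeding through the dual constraints, that separates the two bounds $q^{n(n-d+2)/2}$ and $q^{(n+1)(n-d+1)/2}$ according to whether $n-d$ is even or odd.

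For the codes not assumed additive I would instead invoke the Delsarte \emph{code--anticode} bound, valid in this translation scheme, namely $|\C|\le |S_n(\Fq)|/A$, where $A$ is the maximum size of an \emph{anticode} of diameter $d-1$, i.e.\ a set of symmetric matrices whose pairwise differences all have rank at most $d-1$. The entire content is then the determination of $A$: for odd $d$ one exhibits an extremal anticode (not necessarily a subspace) whose size is a clean power of $q$ and certifies optimality by a feasible solution of the dual program, recovering $q^{n(n-d+2)/2}$ or $q^{(n+1)(n-d+1)/2}$ according to the parity of $n$. For even $d$ the extremal anticode necessarily mixes the two types, and counting its elements through the elliptic/hyperbolic classification of the nondegenerate part is what produces the lower-order factor $\tfrac{1+q^{-n+1}}{q+1}$ (resp.\ $\tfrac{1+q^{-n+d-1}}{q+1}$) appearing in the statement.

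The hard part will be the even-$d$ non-additive case. Proving that the mixed-type anticode is genuinely maximal needs the complete eigenvalue data of the scheme together with delicate bookkeeping of elliptic versus hyperbolic forms, precisely because here the bound is not a power of $q$ and the extremal configuration is not a subspace; one cannot get away with the naive ``rank $\le d-1$ symmetric block'' anticode, which is strictly too small and yields only the weaker value $q^{(n+d)(n-d+1)/2}$. A secondary obstacle is characteristic $2$, where symmetric bilinear forms and quadratic forms must be distinguished and the type invariant redefined accordingly, so this step would have to be run separately in even characteristic.
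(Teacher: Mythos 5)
This survey does not actually prove Theorem \ref{thm:sym}; it is imported verbatim from Schmidt \cite{Schmidt}, so there is no in-paper argument to compare yours against line by line. Judged against the cited source, your overall frame is the right one: Schmidt does work in the translation association scheme of symmetric bilinear forms, does refine the distance distribution by type, and does derive the bounds from the eigenvalues of that scheme via Delsarte-style duality (additive case) and the linear programming method (general case). Your instinct that the parity-dependence of the eigenvalues is what splits the two additive exponents, and your observation that the naive block anticode of rank at most $d-1$ is too small, are both sound. (You also silently read the stray $m$ in the even-$d$ bounds as $n$, which is surely what is intended.)

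There is, however, one step in your plan that would fail as stated: the even-$d$ non-additive case cannot be settled by the code--anticode inequality $|\C|\cdot|\mathcal{A}|\le|S_n(\Fq)|$, no matter how cleverly the anticode mixes types. The stated bound for $n$ odd is $q^{n(n-d+3)/2}(1+q^{1-n})/(q+1)$, and dividing $|S_n(\Fq)|=q^{n(n+1)/2}$ by this gives $q^{n(d-2)/2}(q+1)q^{n-1}/(q^{n-1}+1)$, which is not an integer for $n>2$; so no anticode of that cardinality exists and the bound is not of code--anticode type. What is actually needed here (and what Schmidt does) is an explicit feasible solution of the dual linear program built from the full eigenvalue data of the scheme, of which the anticode bound is only a special case. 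Two smaller caveats: for the additive bounds, the crude ``$\C$ meets a $\binom{d}{2}$-dimensional low-rank subspace'' argument gives only $\dim\C\le\binom{n+1}{2}-\binom{d}{2}$, which is strictly weaker than the claimed exponents, so you genuinely need the type-refined MacWilliams transform (or, equivalently, the known maximal dimensions of bounded-rank subspaces of $S_n(\Fq)$) and not just a dimension count; and your worry about characteristic $2$ is justified, since Schmidt's scheme is set up for odd $q$ and the even-characteristic analogue requires replacing symmetric bilinear forms by quadratic forms and re-deriving the eigenvalues.
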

We note that when $d$ is even, the upper bound for non-additive codes is greater than that for additive codes. However it is not known whether there exist a non-additive code exceeding the bound for additive codes.
\begin{center}
{\bf Open Problem:} Do there exist non-additive symmetric codes larger than the best possible additive codes for even minimum distance?
\end{center}

We call an additive code meeting the upper-bound a {\it maximal additive symmetric rank distance code}. It is known that the bound for additive codes is tight in all cases.
\begin{theorem}[\cite{Schmidt},\cite{Cooperstein}]
The bounds for additive codes in Theorem \ref{thm:sym} are tight.
\end{theorem}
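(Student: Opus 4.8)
The two displayed bounds are upper bounds on the size of an additive code of minimum distance $d$, so tightness is purely a construction problem: for each parity of $n-d$ I must exhibit an additive subspace of $S_n(\Fq)$ of exactly the prescribed size, all of whose nonzero elements have rank at least $d$. The plan is to pass to the linearized-polynomial model. Using the trace form $\beta(a,b)=\tr(ab)$ as in the section on duality for linearized polynomials, a symmetric matrix corresponds to a \emph{self-adjoint} $\Fq$-linear map of $\Fqn$, i.e.\ a linearized polynomial $f(x)=\sum_{i=0}^{n-1} f_i x^{q^i}$ whose adjoint $\hat f(x)=\sum_i f_i^{q^{n-i}}x^{q^{n-i}}$ equals $f$. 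Writing out $f=\hat f$ yields the coefficient pairing $f_j=f_{n-j}^{q^j}$ (indices mod $n$), and since the Frobenius is additive and fixes $\Fq$ pointwise, the set of such $f$ is closed under addition and $\Fq$-scaling, hence is an $\Fq$-linear code; its $\Fq$-dimension is $n(n+1)/2=\dim_{\Fq}S_n(\Fq)$, as it must be.

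Set $r=n-d$. For the case $r$ even I would take $\C$ to be the self-adjoint polynomials supported on the symmetric window of exponents $\{0,1,\dots,r/2\}\cup\{n-r/2,\dots,n-1\}$; note $r/2<n/2$ since $d\ge 1$, so the two halves are disjoint and the middle exponent is never reached. Counting free parameters, $f_0$ ranges freely over $\Fqn$ (contributing $n$), while for each $1\le j\le r/2$ the pair $\{j,n-j\}$ is governed by the single free coefficient $f_j\in\Fqn$ (contributing $n$), with $f_{n-j}$ determined. Hence $\dim_{\Fq}\C = n(1+r/2)=n(n-d+2)/2$, exactly the claimed bound. For the rank estimate I would use the wrap-around factoring trick: since $x^{q^n}=x$ on $\Fqn$, the window is the consecutive block of exponents $\{-r/2,\dots,r/2\}$, and composing with the $\Fq$-linear bijection $x\mapsto x^{q^{r/2}}$ turns $f$ into a genuine linearized polynomial $h(x)=f(x^{q^{r/2}})$ of $\sigma$-degree at most $r$ having the same nullity as $f$. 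By Lemma \ref{lem:nullity} the nullity of $h$ is at most $r$, so every nonzero element of $\C$ has rank at least $n-r=d$.

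To see the minimum distance is exactly $d$, I would avoid exhibiting an extremal element by hand and instead invoke the bound itself: a code of size $q^{n(n-d+2)/2}$ with minimum distance $d'>d$ would violate the additive bound of Theorem \ref{thm:sym} at the larger distance $d'$ (whose right-hand side is strictly smaller). Thus the minimum distance cannot exceed $d$, and combined with the lower estimate it equals $d$. This completes the even case and shows the first bound is tight.

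The case $r$ odd is the genuinely harder one, and it is where the constructions of Schmidt and Cooperstein do the real work. The band with half-width $(r-1)/2$ is self-adjoint with all nonzero ranks at least $n-(r-1)=d+1$, but its dimension is only $n(r+1)/2$, short of the target $(n+1)(r+1)/2$ by $(r+1)/2$. One must therefore augment the band by a partial layer at the next exponent pair $\{(r+1)/2,\,n-(r+1)/2\}$, restricting the new coefficient $f_{(r+1)/2}$ to an $\Fq$-subspace of dimension $(r+1)/2$. The obstacle is that enlarging the window to half-width $(r+1)/2$ a priori permits nullity $r+1$, i.e.\ rank $d-1$; and since $r+1$ is even, the norm obstruction to full nullity is automatically compatible with self-adjointness (here $c_0=f_{n-(r+1)/2}$ and $c_{r+1}=f_{(r+1)/2}$ are linked by $c_0=c_{r+1}^{q^{\,n-(r+1)/2}}$, forcing $N(c_0)=N(c_{r+1})$, exactly the relation such an element would need), so it does not by itself forbid the bad elements. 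The crux is thus to choose the augmenting subspace --- typically cut out by a trace or subfield condition --- so that no nonzero element of the enlarged code attains nullity $n-d+1$. This delicate rank analysis, rather than the dimension count, is the main difficulty and is precisely the content supplied by the cited constructions; once such a subspace is produced, the same size-versus-bound argument pins the minimum distance at $d$ and establishes tightness of the second bound.
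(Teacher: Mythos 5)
The paper offers no proof of this statement --- it is a survey item cited to Schmidt and Cooperstein --- so your attempt can only be judged on its own merits. Your even case ($n-d$ even) is essentially correct and is the standard construction: the self-adjointness condition $f_j=f_{n-j}^{q^j}$ is $\Fq$-linear, the dimension count $n(1+r/2)=n(n-d+2)/2$ is right, the shift $h(x)=f(x^{q^{r/2}})$ correctly converts the symmetric window into a polynomial of $\sigma$-degree $r$ so that Lemma \ref{lem:nullity} gives rank at least $d$, and the monotonicity-of-the-bound argument legitimately pins the minimum distance at exactly $d$. This half of the theorem you have actually proved.

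The genuine gap is the odd case, which you explicitly do not prove: you correctly compute the dimension deficit $(r+1)/2$, correctly observe that the norm obstruction $N(h_0)=(-1)^{n(r+1)}N(h_{r+1})$ is automatically satisfied by self-adjoint elements when $r+1$ is even and therefore cannot be used to exclude nullity $r+1$, and then defer the choice of the augmenting $(r+1)/2$-dimensional $\Fq$-subspace entirely to the cited references. But exhibiting that subspace \emph{is} the theorem in the odd case --- without it you have only shown $|\C|\geq q^{n(n-d+1)/2+ n/2}$ is not obviously attainable, i.e.\ nothing. Since the statement asserts tightness of \emph{both} bounds, half of it remains unestablished. (For what it is worth, the actual constructions do not proceed by restricting a single coefficient to an ad hoc subspace: Schmidt obtains the odd-case codes via the duality between type distributions of $\C$ and $\C^{\perp}$ applied to dual pairs of the even-case band codes, and Cooperstein works geometrically with external flats to the secant varieties; so even the shape of your proposed augmentation is not what the cited proofs do.)
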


However, unlike in the case of arbitrary matrices, the rank and type distribution of an additive code meeting the bound is not uniquely determined. This was noted in \cite{Schmidt}, where examples of potentially different type distributions were given, though it was not known whether such distributions actually occur. This has been resolved by explicit examples to appear in \cite{GoSiSh}.

\begin{theorem}[\cite{GoSiSh}]
The rank distribution and type distribution of a maximal additive symmetric rank distance code are not completely determined by their parameters $q,n$, and $d$.
\end{theorem}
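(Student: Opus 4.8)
The theorem asserts a negative result about uniqueness: that the rank/type distribution of a maximal additive symmetric rank distance code is \emph{not} determined by $q,n,d$ alone. The natural strategy is therefore to exhibit, for some fixed parameters $q,n,d$, two maximal additive codes $\C_1,\C_2\subset S_n(\Fq)$ achieving the bound of Theorem \ref{thm:sym} whose type distributions $(a_{i,\pm})$ differ. Since \cite{Schmidt} already noted candidate distributions that are numerically consistent with the Singleton-like bound and the MacWilliams constraints but were not known to be realized, the plan is to produce explicit codes realizing (at least) two of these distributions. The first step is to identify the smallest parameters where the distribution is underdetermined --- one expects the even-minimum-distance case, where the plus/minus distinction on even-rank elements (elliptic versus hyperbolic type) genuinely bifurcates, to be the relevant regime, mirroring the even/odd split in the statement of Theorem \ref{thm:sym}.

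Concretely, I would fix a small case, say $n$ even with $d$ even, and write down two families of symmetric matrices meeting the additive bound $q^{(n+1)(n-d+2)/2}\bigl((1+q^{-m+d-1})/(q+1)\bigr)$ or its $n$-even analogue. A convenient source of explicit symmetric MRD-type codes is the constructions arising from the known semifield families --- in particular the commutative semifields of Pott--Zhou \cite{PottZhou}, whose spread sets can be represented by symmetric matrices, and which (as the paper's own Remark warns) can be equivalent as rank-metric codes while differing under the $X\mapsto X^TAX$ action. The idea is to compute, for two inequivalent such codes, the number $a_{i,+}$ of rank-$i$ plus-type elements directly: for each codeword one evaluates the type of the associated quadratic form (elliptic/hyperbolic), which amounts to computing a discriminant or Arf-type invariant over $\Fq$. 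Tabulating these counts for both codes and observing a discrepancy in some $a_{i,\pm}$ --- while both satisfy the size bound --- completes the proof.

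The main obstacle is the verification that the two candidate codes genuinely have \emph{different} type distributions rather than merely different presentations of the same distribution. This requires an honest orbit count under the action of $\GL(n,q)$ by congruence, distinguishing the two orbits of each even rank; the difficulty is that the plus/minus invariant is a subtle quadratic-form datum (sign of the discriminant times a power of the nonsquare class), and summing it over an entire additive code is not a routine rank computation. I expect this step to be handled either by a direct machine computation (as the paper's ``New'' entries and the forthcoming \cite{GoSiSh} suggest) or by exploiting structural symmetry --- for instance, a group of automorphisms acting transitively on the rank-$i$ elements of one code but with two orbits on the other --- to force the type counts apart without evaluating every codeword. A secondary technical point is ensuring both codes truly meet the additive bound of Theorem \ref{thm:sym}, which follows once one checks the minimum distance $d$ and the dimension, using that the bound is known to be tight by \cite{Schmidt},\cite{Cooperstein}. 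Once two realized, bound-meeting, distribution-distinct codes are in hand, the theorem follows immediately.
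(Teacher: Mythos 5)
The paper itself contains no proof of this statement: it is quoted from the forthcoming work \cite{GoSiSh}, and the surrounding text only records that Schmidt \cite{Schmidt} had identified candidate rank/type distributions consistent with the bound without knowing whether they occur, and that this ``has been resolved by explicit examples.'' Your overall strategy --- exhibit, for fixed $q,n,d$, two bound-meeting additive codes in $S_n(\Fq)$ whose distributions differ --- is therefore exactly the right (indeed the only possible) shape of argument, and it matches what the paper indicates was done. But as written your proposal is a programme rather than a proof: the entire content of the theorem is the existence of the two explicit examples, and you never produce them. You correctly flag this yourself (``handled either by a direct machine computation\ldots''), but until the codes are written down and their type distributions computed and seen to differ, nothing has been proved. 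There is no soft or structural argument available here that would let you conclude without that computation.

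A more specific concern: your proposed source of examples is a pair of codes that are \emph{equivalent as rank-metric codes} but inequivalent under the congruence action $A\mapsto X^TAX$ (leaning on the paper's remark about \cite{PottZhou}). Such a pair necessarily has the \emph{same} rank distribution, since rank-metric equivalence preserves ranks. That can at best separate the type distributions. If the theorem is read as asserting that the rank distribution itself is not determined by $q,n,d$ (which the phrasing ``rank distribution and type distribution\ldots are not completely determined'' naturally suggests), then this route cannot establish it, and you would instead need two bound-meeting codes whose rank distributions genuinely differ --- note that for symmetric codes there is no analogue of Delsarte's theorem forcing the rank distribution of a bound-meeting additive code, precisely because the MacWilliams-type duality here (Schmidt's Theorem quoted at the end of that section) constrains the \emph{type} distribution, not the rank distribution alone. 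You should either strengthen the construction to produce rank-distribution-distinct examples, or explicitly argue that separating the type distributions suffices for the intended reading of the statement.
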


In the case of minimum distance $d=n$, the upper bound is $q^n$, which is identical to the Singleton-like bound, and so all maximal additive symmetric rank distance codes with minimum distance $n$ are in fact MRD codes, and thus correspond to quasifield. In the additive case they correspond to semifields. It is known that semifields defined by a subspace of symmetric matrices are obtained from semifields in which multiplication is {\it commutative}. 

\begin{theorem}[\cite{KantorComm}]
Equivalence classes of additive maximal additive symmetric rank distance codes in $M_{n\times n}(\Fq)$ with minimum distance $n$ are in one-to-one correspondence with isotopy classes defined by commutative semifields.
\end{theorem}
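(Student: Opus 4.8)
The plan is to realise both sides as two different symmetries of a single nonsingular trilinear form, in the spirit of the Knuth-orbit/nonsingular-tensor picture mentioned above. Since an additive symmetric code $\C$ with $d=n$ meets the Singleton-like bound, it is a semifield spread set, so by the correspondence between semifields and MRD codes recalled earlier it defines a presemifield via $x * y := M_y(x)$, where $y\mapsto M_y$ is an additive isomorphism from $\Fqn$ onto $\C$. Let $\langle\,\cdot\,,\cdot\,\rangle$ denote the non-degenerate symmetric bilinear form with respect to which every element of $\C$ is symmetric, and set $T(x,y,z):=\langle x*y,z\rangle = x^T M_y z$. The symmetry $M_y=M_y^T$ of the codewords is then exactly the statement that $T(x,y,z)=T(z,y,x)$, i.e. $T$ is invariant under transposing its first and third slots.

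Next I would manufacture the commutative operation by relabelling these slots. Define $x\circ y$ to be the unique vector with $\langle x\circ y,z\rangle = T(x,z,y)=\langle x*z,y\rangle$ for all $z$. Commutativity is immediate: $\langle x\circ y,z\rangle=T(x,z,y)=T(y,z,x)=\langle y\circ x,z\rangle$ by the $(1,3)$-symmetry of $T$, so $x\circ y=y\circ x$. To see there are no zero divisors, recall that for fixed $x\ne 0$ the spread-set property makes $z\mapsto x*z$ a bijection; hence $x\circ y=0$ forces $\langle x*z,y\rangle=0$ for all $z$, i.e. $y=0$. Thus $\circ$ is a commutative presemifield, and by the standard normalisation (choosing an identity via $u\circ v := R_e^{-1}u * R_e^{-1}v$, which preserves commutativity when $*$ is commutative) it lies in a well-defined isotopy class of commutative semifields. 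Conversely, given a commutative presemifield $(\Fqn,\circ)$ I would define $N_z$ by $x^T N_z y=\langle x\circ y,z\rangle$; commutativity makes each $N_z$ symmetric, and the no-zero-divisor property makes $z\mapsto N_z$ injective with every nonzero $N_z$ invertible, so $\C=\{N_z\}$ is an additive symmetric MRD code of distance $n$. A direct computation then shows these two constructions are mutually inverse up to the relevant equivalences.

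It remains to check that the correspondence descends to a bijection on the respective equivalence classes. For this I would invoke the general fact recalled above that two semifield spread sets are (rank-metric) equivalent if and only if the semifields are isotopic, and then observe that the relabelling $T(x,y,z)\leftrightarrow T(x,z,y)$ is the transposition intertwining ``symmetric in slots $1,3$'' (symmetry of the code) with ``symmetric in slots $1,2$'' (commutativity), carrying one equivalence notion to the other. One must also verify that the auxiliary choices — the symmetric form $\langle\,\cdot\,,\cdot\,\rangle$ and the basis realising the codewords as symmetric matrices — only move the output within a single isotopy class.

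The main obstacle is precisely this last bookkeeping of equivalences, and in particular the point flagged in the earlier Remark: the correct notion to match isotopy of commutative semifields is rank-metric equivalence of the symmetric codes, \emph{not} the congruence action $A\mapsto X^TAX$. An isotopy of the commutative semifield uses three independent maps and need not respect the symmetric structure, so it corresponds to a genuinely two-sided transformation $\C\mapsto X\C Y$ of the code rather than a congruence; I would have to argue that every such rank-metric equivalence between two symmetric codes can be reconciled with an isotopy of the associated commutative semifields (and conversely), which is the step where the interplay between the two- and three-factor symmetries of $T$ must be handled with care.
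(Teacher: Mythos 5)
The paper itself offers no proof of this statement: it is quoted directly from Kantor's work on commutative semifields and symplectic spreads, and the only guidance the text gives is the remark immediately following it, which points at exactly the mechanism you use --- the spread set of a commutative semifield need not be symmetric, and one must pass through the \emph{transposition} operation in the Knuth orbit. Your construction is the standard realisation of that operation via the nonsingular trilinear form $T(x,y,z)=\langle x*y,z\rangle$, and both halves are correct: the $(1,3)$-symmetry of $T$ coming from $M_y=M_y^T$ does translate into commutativity of the slot-permuted product $\circ$, the nondegeneracy arguments for absence of zero divisors are sound, and the converse construction of the symmetric matrices $N_z$ from a commutative presemifield is right (as is the normalisation $u\circ v=R_e^{-1}(u)*R_e^{-1}(v)$, which has identity $e*e$ and preserves commutativity). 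The one step you leave as a plan --- the descent to equivalence classes --- does close, and along the lines you indicate: rank-metric equivalence of spread sets is isotopy of the products $*$, and an isotopy $X(x*_1y)=Y(x)*_2Z(y)$ is precisely a slotwise $\GL\times\GL\times\GL$-equivalence $T_1(x,y,w)=T_2(Yx,Zy,X^{-T}w)$ of the trilinear forms with no permutation of slots; conjugating such an equivalence by the transposition of the second and third slots yields a slotwise equivalence of the permuted forms, i.e.\ an isotopy of the commutative products, and vice versa. Combined with your observation that changing the auxiliary bilinear form or the parametrisation $y\mapsto M_y$ only replaces $\circ$ by an isotope, this gives well-definedness and injectivity in both directions, hence the bijection on classes. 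You are also right --- and in agreement with the paper's earlier remark on symmetric codes --- that the equivalence to use on the symmetric side is rank-metric equivalence $\C\mapsto X\C Y$ rather than the congruence action $A\mapsto X^TAX$; had you used congruence, the count of classes would generally be too fine and the bijection would fail.
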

We note however that this correspondence is not direct; the spread set of a commutative semifield does not necessarily consist of symmetric matrices. One has to perform the semifield operation known as {\it transposition}, part of the {\it Knuth orbit} of a semifield, in order to obtain a set of symmetric matrices. Because of this issue, it is common in the literature on semifields to refer to {\it symplectic semifields} rather than commutative semifields. We note one important result here.

\begin{theorem}[\cite{BaBlLa}]
If $q$ is large enough, then every maximal additive symmetric rank distance code in $S_2(\Fq)$ with minimum distance $2$ is equivalent to one of a small number of known constructions.
\end{theorem}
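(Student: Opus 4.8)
The plan is to route the classification through the semifield correspondence and then reduce it to a question about rational points on an algebraic curve, where the hypothesis that $q$ is large enters through the Hasse--Weil bound. First I would apply the dictionary of the preceding theorem (the Knuth-orbit/transposition correspondence between symmetric rank-metric codes of minimum distance $n$ and commutative semifields) to replace the code by a commutative presemifield of rank $2$ over its middle nucleus, equivalently a semifield flock of a quadratic cone. Concretely, a maximal additive symmetric code of minimum distance $2$ may be written as $\C=\pt{A_1,A_2}$ with every nonzero element nonsingular; after choosing coordinates so that one generator is the identity form, the code is governed by a single additive map $f$ on the nucleus, and the requirement $\rank(A)=2$ for all $0\ne A\in\C$ becomes the statement that a quadratic form $Q_f$ built from $f$ is anisotropic. (When the entries already lie in $\Fq$ this forces $\C$ to be an external line to the conic of singular forms in $\PG(S_2(\Fq))$, a single orbit; the content is therefore the extension case, where $f$ is genuinely nonlinear.)

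Next I would rephrase anisotropy as a scarcity-of-zeros statement. The vanishing of $\det$ on $\C$ is controlled by a homogeneous polynomial $F(x,y)$ attached to $f$: $\C$ contains a nonzero singular element precisely when $F$ has a nontrivial zero, equivalently when the value set of a difference-quotient expression $f(x)/x$ meets a fixed multiplicative coset (the relevant squares/norms). Associating to $F$ the projective plane curve $F=0$, anisotropy says this curve has no $\Fqs$-rational points away from a small, explicitly controlled locus. The degree of the curve is bounded in terms of the $\sigma$-degree of $f$, which is in turn bounded by the dimension of the ambient symmetric space, so the curve has bounded degree independent of $q$.

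The heart of the argument, and the place where ``$q$ large'' is indispensable, is then a point-counting estimate. If $f$ were not of the restricted monomial/scalar shape corresponding to the known families, the attached curve would be a genuinely nontrivial curve of bounded degree, and the Hasse--Weil estimate (or a Bezout/Weil-type bound on its number of $\Fqs$-points) would guarantee a rational point once $q$ exceeds an explicit threshold depending only on that degree --- contradicting anisotropy. The main obstacle is controlling the geometry of this curve well enough for the estimate to bite: one must establish absolute irreducibility (or at least account for all components) so that Hasse--Weil applies, and then show that the only configurations surviving the bound are exactly the known constructions. It is precisely this irreducibility-and-degree analysis that produces the lower bound on $q$ and collapses the classification onto the short list of known examples.
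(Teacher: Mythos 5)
First, a caveat: the survey does not prove this statement --- it is quoted from \cite{BaBlLa} (Ball--Blokhuis--Lavrauw, ``On the classification of semifield flocks'') --- so your proposal has to be measured against the proof in that source. Your opening reduction is the right one and matches the actual route: via the transposition/symplectic step in the Knuth orbit, a maximal additive symmetric code in $S_2(\Fq)$ with $d=2$ corresponds to a commutative semifield of order $q^2$ of rank $2$ over its middle nucleus, equivalently to a semifield flock of the quadratic cone. (Two small caveats: the code is governed by a \emph{pair} of additive maps $(f,g)$, not a single $f$, and even characteristic must be split off and handled separately, as Cohen--Ganley did.)

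The gap is in the point-counting step, and it is fatal. An additive map on $\Fq$, $q=p^h$, is a $p$-linearized polynomial of $\sigma$-degree at most $h-1$, hence of \emph{ordinary} degree up to $p^{h-1}=q/p$; the degree of any plane curve attached to $(f,g)$ therefore grows with $q$ and is certainly not ``bounded independent of $q$'' --- you have conflated the size of the matrices ($2\times 2$) with the degree of the defining additive maps. Consequently the Hasse--Weil error term $(d-1)(d-2)\sqrt{q}$ swamps the main term $q$ in every nontrivial case $h\ge 2$, and absolute irreducibility is beside the point: the estimate says nothing. The fixed-degree/growing-field regime in which Weil-type bounds do bite is the setting of \emph{exceptional} scattered polynomials (\cite{BaZh}, quoted elsewhere in this survey), which is a genuinely different theorem. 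In the theorem at hand, ``$q$ large'' means: if $\FF_s$ is the largest subfield over which the defining maps are linear and $q=s^n$, then $s>4n^2-8n+2$. The proof in \cite{BaBlLa} contains no curves at all: it encodes the flock as an $\FF_s$-linear set of rank $n$ in $\PG(2,s^n)$ all of whose points are internal to a conic, and shows by a purely combinatorial incidence count (a line meeting a linear set in two points meets it in at least $s+1$, against the secant/tangent/external line structure of the conic) that such a linear set cannot span the plane once $s$ exceeds that bound; the linear set then collapses onto a line or into a subplane, yielding exactly the linear and Kantor examples. You would need to replace your third paragraph by an argument of this combinatorial type, or otherwise circumvent the degree obstruction, for the proposal to go through.
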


In \cite{LaRo}, maximal additive symmetric rank distance code in $S_2(\FF_{2^4})$ and  $S_2(\FF_{3^4})$ were classified.

We can use the same bilinear form as before to define duality for $U\leq S_n(\Fq)$:
\[
U^{\perp} = \{B:B\in S_n(\Fq), \Tr(AB)=0 \quad \forall A\in U\}.
\]
In this case however, the rank distribution of $U$ does not determine the rank distribution of $U^\perp$. Instead we need to consider the type distribution. The following was shown by Schmidt in \cite[Theorem 3.1]{Schmidt}.

\begin{theorem}[\cite{Schmidt}]
Suppose $\C$ is an additive set in $S_n(\Fq)$. Then the type distribution of $\C^{\perp}$ is uniquely determined by the type distribution of $\C$.
\end{theorem}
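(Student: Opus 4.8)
The plan is to prove this by the character-theoretic (MacWilliams/Poisson summation) method, arranged so that the relevant transform is a \emph{class function for the congruence action} rather than merely for the rank stratification. Fix a nontrivial additive character $\chi$ of $\Fq$, and regard $S_n(\Fq)$ as an $\Fq$-vector space equipped with the nondegenerate symmetric pairing $\langle A,B\rangle = \Tr(AB)$ used to define $\C^\perp$. For a function $f\colon S_n(\Fq)\to\CC$ set
\[
\hat f(A) = \sum_{B\in S_n(\Fq)} f(B)\,\chi\bigl(\Tr(AB)\bigr).
\]
Poisson summation for the dual pair $\C,\C^\perp$ gives, for every $f$,
\[
\sum_{B\in\C^\perp} f(B) = \frac{1}{|\C|}\sum_{A\in\C}\hat f(A).
\]
Taking $f=\mathbf 1_{N_{i,\epsilon}}$, the indicator of the set $N_{i,\epsilon}$ of symmetric matrices of rank $i$ and type $\epsilon$, expresses each entry $a_{i,\epsilon}^{\perp}$ of the type distribution of $\C^\perp$ as $\tfrac{1}{|\C|}\sum_{A\in\C}\widehat{\mathbf 1}_{N_{i,\epsilon}}(A)$.

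The crux is the following class-function lemma: $\widehat{\mathbf 1}_{N_{i,\epsilon}}$ is constant on each congruence class $N_{j,\delta}$. Indeed, the paired action $(A,B)\mapsto (X^T A X,\ X^{-1}BX^{-T})$ for $X\in\GL(n,q)$ fixes $\Tr(AB)$, and both components preserve rank and type; hence substituting $B\mapsto XBX^T$ permutes $N_{i,\epsilon}$ and yields
\[
\widehat{\mathbf 1}_{N_{i,\epsilon}}(X^T A X) = \sum_{B\in N_{i,\epsilon}}\chi\bigl(\Tr(AXBX^T)\bigr) = \sum_{B'\in N_{i,\epsilon}}\chi\bigl(\Tr(AB')\bigr) = \widehat{\mathbf 1}_{N_{i,\epsilon}}(A).
\]
Thus $\widehat{\mathbf 1}_{N_{i,\epsilon}}$ takes a single value $K_{(i,\epsilon),(j,\delta)}$ on all of $N_{j,\delta}$, and this value depends only on $q$ and $n$, not on $\C$.

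Combining the two displays, $a_{i,\epsilon}^{\perp} = \tfrac{1}{|\C|}\sum_{j,\delta} K_{(i,\epsilon),(j,\delta)}\,a_{j,\delta}$, so the type distribution of $\C^\perp$ is obtained from that of $\C$ by a fixed linear transformation whose matrix $K=(K_{(i,\epsilon),(j,\delta)})$ is an intrinsic invariant of $(q,n)$. This proves the claim, and simultaneously explains the earlier failure at the level of rank alone: the analogous computation with $f$ the indicator of a full rank stratum would require $\widehat{\mathbf 1}$ to be constant on rank strata, which the congruence argument does not furnish.

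The main obstacle is not this linear-algebraic bookkeeping but the two geometric inputs it rests on: that congruence under $\GL(n,q)$ has exactly the two orbits (plus and minus type) on symmetric matrices of each given rank, and that $\Tr(AB)$ is nondegenerate on $S_n(\Fq)$ so that Poisson summation applies. Both are delicate in characteristic two, where the alternating (zero-diagonal) symmetric matrices lie in the radical of $\Tr(AB)$ and where the congruence orbits are naturally indexed by quadratic rather than bilinear data; there the duality must instead be routed through the associated quadratic forms. I would therefore treat odd characteristic by the transform above and handle characteristic two by replacing the pairing and the orbit classification with their quadratic-form analogues, after which the identical class-function argument delivers the fixed transition matrix $K$. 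Explicitly evaluating the entries of $K$ (a Krawtchouk-type computation in the symmetric-bilinear-forms association scheme) is then a separate task, not needed for the present uniqueness statement.
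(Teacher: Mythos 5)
The survey does not prove this theorem; it cites it directly from Schmidt, whose argument runs through the association scheme of symmetric bilinear forms, and your character-sum proof is exactly the elementary unwinding of that: your class-function lemma is the statement that the congruence classes form a translation scheme, and your matrix $K$ consists of its eigenvalues. Your argument is correct, and your caveat about characteristic two is well placed, since there $\Tr(AB)$ degenerates on $S_n(\Fq)$ (it sees only the diagonals) and the orbit/type classification changes, which is why Schmidt's treatment of that case is routed through quadratic forms exactly as you indicate.
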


%
%


\section{Related topics}

Many other properties of MRD code, and other types of rank-metric codes have been investigated recently. We list some of this research here.
\begin{itemize}
\item
Gabidulin codes which are self-dual with respect to Delsarte's bilinear form \cite{NebeWillems}.
\item
The {\it covering radius} of a rank-metric code \cite{ByrneRavagnani},
\item 
The {\it zeta-function} of a rank-metric code \cite{zeta}.
\item
Codes which are optimal with given {\it Ferrers diagram} \cite{FerrersEtzion}, \cite{FerrersZhang}, \cite{FerrersHeide}.
\item
Quasi-MRD codes, that is, codes which are optimal with a given dimension not divisible by $n$ \cite{QuasiMRD}.
\item
Almost MRD codes, that is, codes with minimum distance one less than the maximum possible \cite{DeLaCruz}.
\item
Non-additive MRD codes with $m<n$ \cite{DuSi}, \cite{CoMaPa}, \cite{OzNon}.
\item
Optimal codes of alternating matrices \cite{GoQu2009b} or hermitian matrices \cite{GoLaShVa}, \cite{Ihringer}, \cite{SchmidtHerm}. .
\item
Constant rank codes, that is subspaces of matrices in which every nonzero element has a fixed rank \cite{Gow}, \cite{SheekeyThesis}.
\end{itemize}

\subsection{Decoding and Cryptography}
\label{sec:crypto}

Rank-metric codes have been proposed for use in cryptography via a McEliece-type cryptosystem; see for example \cite{GabCrypto}. The key requirements to construct such a system are an efficient decoding algorithm, and a method to scramble the generator matrix of a code. To date Delsarte-Gabidulin codes have been used, due to their good error-correction capabilities (high minimum distance, efficient decoding algorithm), and the relatively small key-size ($\Fqn$-linearity means that a generator matrix is smaller than that for an $\Fq$-linear code).

There have been a variety of papers with the intention of improving systems based on Gabidulin codes, as well as papers showing weaknesses in some of these systems. For this reason, other MRD codes with efficient decoders are required in order to circumvent these attacks. Recent work on decoding Twisted Gabidulin codes can be found in \cite{TGabDecode}.

%


\begin{center}
{\bf Open Problem:} Construct new MRD codes with efficient decoding algorithms which do not possess the suspected weaknesses of the known examples.
\end{center}

\section{Appendix: Representations}
\label{sec:representations}

In the literature, various equivalent representations are used when working with rank-metric codes. Here we outline these, and give explicit ways of transferring between them.

To any $\Fq$-homomorphism from a vector space $V(n,q)$ to a vector space $V(m,q)$, we can associate each of the following:
\begin{itemize}
\item
a matrix $A\in M_{n \times m}(\Fq)$. The rank is the usual matrix rank;
\item
a vector $v\in (\Fqn)^m$; the rank is the dimension of the span of the entries over $\Fq$.
\item
a linearized polynomial $f\in \Fqn[x]$ of degree at most $q^{m-1}$; the rank is the rank as an $\Fq$-linear map on $\Fqn$.
\item
a Dickson matrix (if $m=n$); the rank is the usual matrix rank.
\item
a Moore matrix; the rank is the usual matrix rank.
\item
a tensor in $V(n,q)\otimes V(m,q)$; the rank is the usual tensor rank.
\end{itemize}

{\bf Between vectors and matrices}


Choose an $\Fq$-basis $\B = \{e_0,\ldots,e_{n-1}\}$ for $\Fqn$. Given a vector $v = (v_1,\ldots,v_m)\in(\Fqn)^m$, we expand each entry with respect to this basis; i.e . $v_j = \sum_{i=0}^{n-1}\lambda_{ij} e_i$. Then we identify the vector $v$ with the matrix
\[
A_{v,\B} = (\lambda_{ij})_{i,j} \in M_{n \times m}(\Fq).
\]
%

{\bf Between vectors and Moore matrices}

Let $v = (v_1,\ldots,v_m)\in \Fqn^m$. Define the {\it Moore matrix} $M_v$ by
\[
M_v := \npmatrix{v_1&v_2&\cdots&v_m\\v_1^\sigma&v_2^\sigma&\cdots&v_m^\sigma\\\vdots&\vdots&\ddots&\vdots\\v_1^{\sigma^{n-1}}&v_2^{\sigma^{n-1}}&\cdots&v_m^{\sigma^{n-1}}}.
\]
It is well known that $\rank(v) = \rank(M_v)$.

{\bf Between tensors and matrices}

Choose a basis $\B_1 = \{v_1,\ldots,v_n\}$ of $\Fq^n$ and $\B_2 = \{w_1,\ldots,w_m\}$ of $\Fq^m$. Then we identify the tensor $T = \sum_{i,j} T_{ij}w_i\otimes v_j$ with the matrix  $A_{T,\B_1,\B_2}$ whose $(i,j)$-entry is $T_{ij}$.  


{\bf Between linearized polynomials and vectors}

From a linearized polynomial $f$ and a fixed ordered string $\a = (\alpha_1,\ldots,\alpha_n)$ of $m$ elements of $\Fqn$, linearly independent over $\Fq$, we define the vector
\[
v_{f,\a} = (f(\alpha_1),f(\alpha_2),\cdots,f(\alpha_m)).
\]
Given $v$ and an ordered $m$-tubple $\a = (\alpha_1,\ldots,\alpha_m)$, there exists a unique linearized polynomial $f_{v,\a}$ of degree at most $q^{m-1}$ such that

\[
f_{v,\a}(\alpha_i) = v_i.
\]
Note that any other linearized polynomial $g$ with $g(\alpha_i)=v_i$ for all $i$ must be of the form $g = f_{v,\a}+h\circ m_\a$, where 
\[
m_\a := \prod_{\lambda \in \pt{\alpha_1,\ldots,\alpha_n} }(x-\lambda).
\]

{\bf From linearized polynomial to matrix}

Choose a basis $\B = \{v_1,\ldots,v_n\}$ of $\Fq^n$. Define $X_{ij}$ by
\[
f(e_j) = \sum_{i=1}^n X_{ij}e_i.
\]
Then we define the matrix of $f$ with respect to $\B$ as the $n\times n$ matrix with $(i,j)$-entry $X_{ij}$, and denote it by  $X_{f,\B}$. Clearly if $\B'$ is any other basis, then $X_{f,\B}$ and $X_{f,\B'}$ are similar.

Given an ordered $m$-tuple $\a = (\alpha_1,\ldots,\alpha_m)$ of elements of $\Fqn$, linearly independent over $\Fq$, then we have that
\[
A_{v_{f,\a},\B} = X_{f,\B}A_{\a,\B}.
\]

{\bf From tensor to linearized polynomial}

We identify the rank-one tensor $a\otimes b\in \Fqn\otimes \Fqn$ with the linearized polynomial
\[
f_{a\otimes b}(x) = a\tr(bx) = abx+ab^\sigma x^\sigma+\cdots + ab^{\sigma^{n-1}}x^{\sigma^{n-1}}.
\]
For any tensor $T= \sum_{i=1}^n a_i\otimes b_i$ we define
\[
f_T(x) = \sum_{i=1}^n a_i \tr(b_i x).
\]

{\bf From linearized polynomial to Dickson matrix}

From a linearized polynomial $f$ we define the {\it Dickson matrix}
\[
D_f = \npmatrix{f_0&f_1&\cdots&f_{n-1}\\
f_{n-1}^\sigma&f_0^\sigma&\cdots&f_{n-2}^\sigma\\
\vdots&\vdots&\ddots&\vdots\\
f_1^{\sigma^{n-1}}&f_2^{\sigma^{n-1}}&\cdots &f_0^{\sigma^{n-1}}}.
\]
Note that this is sometimes called an {\it autocirculant matrix}.

Then it holds that $D_fD_g = D_{f\circ g}$, where the composition is reduced mod $x^{\sigma^n}-x$. Furthermore, $D_{\hat{f}} = D_f^T$. Non-square Dickson matrices have been recently introduced in \cite{CsSi}.

{\bf Between Dickson matrices and matrices}

Let $\B = \{v_1,\ldots,v_n\}$ be an ordered $\Fq$-basis for $\Fqn$. 

Then the Dickson matrix of $f$ and the matrix of $f$ with respect to $\B$ satisfies
\[
D_f M_\B = M_\B X_{f,\B},
\]
and so $D_f$ and $X_{f,\B}$ are similar over $\Fqn$, with the transition matrix being the Moore matrix defined by $\B$. Thus we have the well-known fact that $\rank(f)=\rank(D_f)=\rank(X_{f,\B})$. Furthermore we have that $X_{f,\B}X_{g,\B}=X_{f\circ g,\B}$


%
%

\end{document}